\documentclass[11pt]{article}

\usepackage[a4paper,margin=1.05in]{geometry}
\usepackage{amsmath,amssymb,amsthm,mathtools}
\usepackage{enumitem}
\usepackage{microtype}
\usepackage{graphicx}
\usepackage{float}
\usepackage{tikz}
\usetikzlibrary{arrows.meta,positioning}
\usepackage[hidelinks]{hyperref}

\newtheorem{theorem}{Theorem}[section]
\newtheorem{proposition}[theorem]{Proposition}
\newtheorem{lemma}[theorem]{Lemma}
\newtheorem{corollary}[theorem]{Corollary}
\newtheorem{definition}[theorem]{Definition}
\newtheorem{remark}[theorem]{Remark}
\newtheorem{example}[theorem]{Example}

\DeclareMathOperator{\Fix}{Fix}
\DeclareMathOperator{\supp}{supp}
\DeclareMathOperator{\Aut}{Aut}
\DeclareMathOperator{\End}{End}
\DeclareMathOperator{\Inn}{Inn}
\DeclareMathOperator{\coker}{coker}

\DeclareMathOperator{\oc}{oc}
\DeclareMathOperator{\dc}{dc}
\DeclareMathOperator{\cfc}{cfc}
\DeclareMathOperator{\id}{id}
\DeclareMathOperator{\im}{im}

\newcommand{\RT}{\operatorname{RT}}
\newcommand{\RN}{\mathcal R}
\newcommand{\VN}{\mathcal V}
\newcommand{\MN}{\mathcal M}
\newcommand{\zetaoc}{\zeta_{\mathrm{oc}}}
\newcommand{\Z}{\mathbb Z}
\newcommand{\N}{\mathbb N}
\newcommand{\R}{\mathbb R}
\newcommand{\T}{\mathbb T}
\newcommand{\wideG}{\widehat G}
\newcommand{\phihat}{\widehat\phi}

\newcommand{\Heis}{\mathbb H}

\title{Finite-Cover Resolution Complexity of Nielsen Fixed-Point Spectra}
\author{Ahmet Selman Kaya}
\date{}

\begin{document}
\maketitle

\begin{abstract}
For a self-map $f\colon X\to X$ of a finite connected CW complex, we study
how much of the Reidemeister trace can be detected and resolved by finite
regular covers compatible with $f$.  The observed traces define a visibility
profile and three thresholds: the detection complexity $\dc(f)$, the
cancellation-free complexity $\cfc(f)$, and the observer complexity $\oc(f)$,
the least cover degree separating all essential Nielsen fixed-point classes.
We identify finite-observer indistinguishability with twisted conjugacy in the
profinite completion and determine the resulting blind subgroup.  We also
realize nonzero blind traces on a finite complex: there are maps with two
essential Nielsen classes whose observed Reidemeister trace vanishes in every
finite compatible cover.  On the fixed complex $S^1\vee S^1\vee S^2$,
observer complexity is unbounded while the induced maps on the fundamental
group and homology, the Lefschetz number, and the Nielsen number remain fixed.

For a principal torus bundle with characteristic class
$[\omega]\in H^2(\T^n;\Z^m)$ and a compatible bundle map inducing $A$ on the
base and $C$ on the fibre, in the finite-Reidemeister regime we identify the
unique maximal resolving kernel and prove
\[
  \oc(F)=|\det(I-A)|\,|\det(I-C)|
  [\Z^n:R_{I-C}(\omega)],
\]
where $R_{I-C}(\omega)$ is the radical of the characteristic class reduced
modulo $(I-C)\Z^m$.  For toral endomorphisms we compute the exact staircase
profile
\[
  \VN_{f_A}(B)=\max\{d:d\mid |\det(I-A)|,\ d\le B\}.
\]
For scalar dilations $F_{k,r}$ of integral Heisenberg nilmanifolds, the
cohomological formula gives
\[
  N(F_{k,r})=(k-1)^{2r}(k^2-1),\qquad
  \oc(F_{k,r})=(k-1)^{2r}(k^2-1)^{2r+1}.
\]
For $k=2$ we compute the full all-or-nothing profile, whose jump from one to
three visible classes occurs exactly at degree $3^{2r+1}$.  In dimension
three, the value $\VN_{F_{3,1}}(8)=5$ shows that nonabelian visibility need
not divide the Nielsen number.  We derive the corresponding observer entropy
and rational zeta function.  An
explicit toral--Heisenberg pair has identical Nielsen and Lefschetz sequences,
Nielsen zeta functions, differential eigenvalues, and topological entropy,
while the ratio of its observer-complexity sequences has exact exponential
rate $4r\log k$.
\end{abstract}

\noindent\textbf{Keywords.}
Nielsen fixed-point theory; Reidemeister trace; finite covers; principal torus bundle; characteristic class; twisted
conjugacy separability; profinite completion; Heisenberg nilmanifold; toral endomorphism; dynamical zeta function; entropy.

\noindent\textbf{2020 Mathematics Subject Classification.}
Primary 55M20; Secondary 55R15, 20E26, 20J06, 37C25, 37C30.

\section{Introduction}

The Reidemeister trace refines both the Lefschetz number and the Nielsen
number: its support is the set of essential Nielsen classes, and its
coefficients record their total local fixed-point indices; see
\cite{Jiang,Staecker}.  Passing to a finite quotient of the fundamental group
coarsens this data.  Distinct Reidemeister classes may merge, and coefficients
of opposite sign may cancel.  We ask:

\begin{quote}
How large must a finite, dynamically compatible regular cover be in order to
separate all essential Nielsen fixed-point classes of a given map?
\end{quote}

For each compatible finite quotient we define an \emph{observed Reidemeister
trace}.  Its raw class count, visible support, and surviving absolute index
mass lead to the visibility profile $\VN_f(B)$ and to three thresholds:
\[
  \dc(f)\le \cfc(f)\le \oc(f),
\]
whenever $N(f)>0$.  Here $\dc(f)$ is the first degree at which a nonzero trace
is detected, $\cfc(f)$ is the first degree preserving the full absolute index
mass, and $\oc(f)$ is the first degree separating all essential classes.
Figure~\ref{fig:resolutionhierarchy} summarizes the resulting resolution
hierarchy.

\begin{figure}[t]
\centering
\includegraphics[width=0.96\textwidth]{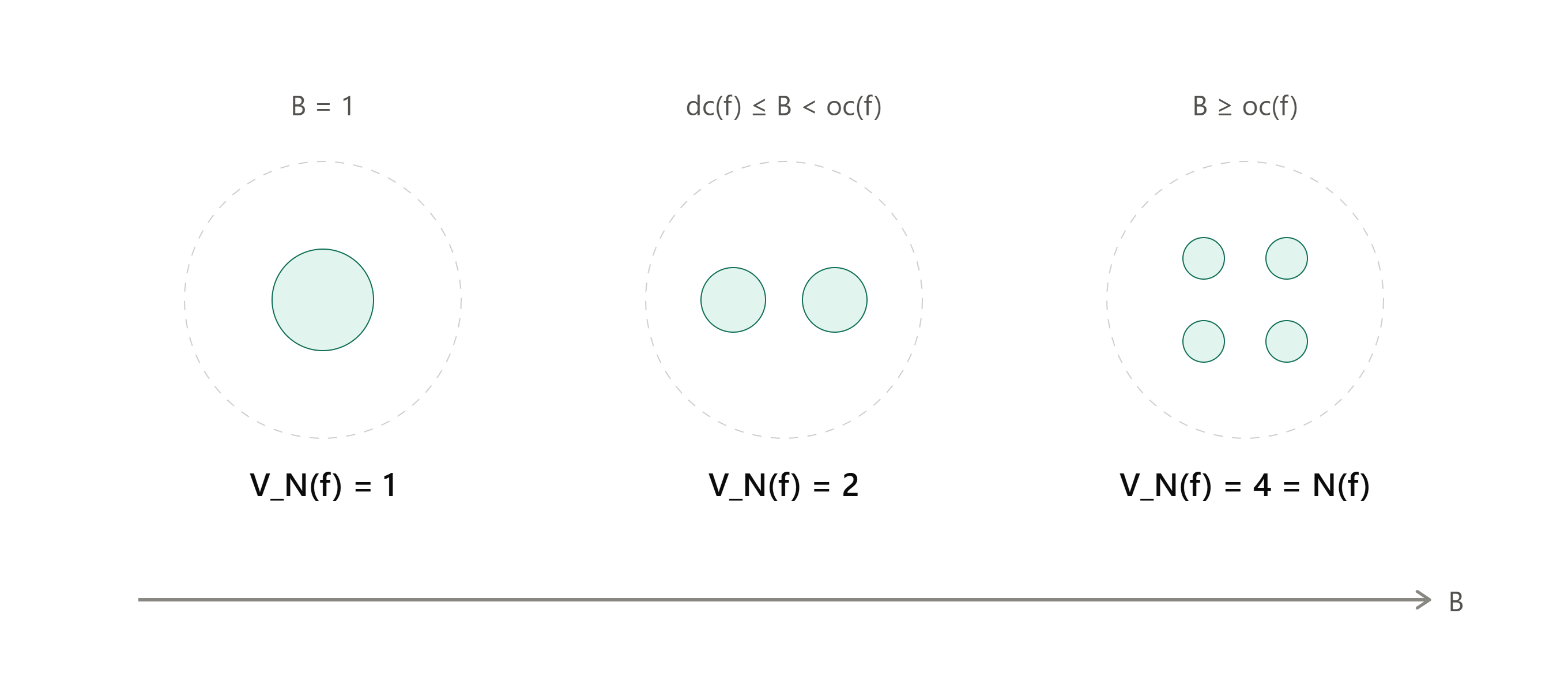}
\caption{Finite-cover resolution of the essential fixed-point spectrum as the
observer budget $B$ increases.  Low-degree covers may merge classes or cancel
their indices; once $B\ge\oc(f)$ every essential class is separated.}
\label{fig:resolutionhierarchy}
\end{figure}

Finite-quotient Reidemeister traces, subgroup-relative groupings, and
finite-cover averaging constructions are classical; see, for example,
\cite{Ponto,Staecker,LeeStaecker}.  Existence of finite quotients detecting
Reidemeister classes is related to property $RP$ and to twisted-conjugacy
separability \cite{FelTro,Tertooy}.

Effective twisted-conjugacy separability, as developed by Der\'e and
Pengitore \cite{DerePengitore}, is pairwise and asymptotic: after fixing a
word metric and an automorphism, it measures the worst-case minimal order of
a finite quotient required to separate two non-twisted-conjugate elements of
bounded word length.  The invariant studied here is of a different type.  For
a fixed self-map, it asks for the exact least order of one
$\phi$-compatible quotient whose Reidemeister-class map is injective on the
entire finite essential support of $\RT(f)$.  The detection and
cancellation-free variants also retain the integer index coefficients carried
by the trace.  In the automorphism setting covered by Der\'e--Pengitore,
their pairwise bounds imply finite simultaneous resolvability of a fixed
finite support after intersecting finitely many pair-separating kernels; they
do not determine the optimal single quotient or its exact order.  The exact
toral and central-extension formulas below solve precisely this simultaneous
optimization problem.  Our scalar Heisenberg dilations are non-surjective
endomorphisms, so the automorphism results of \cite{DerePengitore} do not
apply to them directly.

The first part of the paper develops the general structure.  Refinement makes
the observational signature monotone; finite-observer indistinguishability is
exactly indistinguishability in the profinite completion; and the blind
subgroup is the kernel of the induced map to profinite twisted-conjugacy
classes.  A general graph--sphere realization theorem shows that this blind
subgroup is topologically effective: on a finite complex one can realize two
essential Nielsen classes with opposite indices whose observed trace vanishes
in every finite compatible cover.  A second realization argument on the fixed
nonabelian complex $S^1\vee S^1\vee S^2$ produces maps for which all
observers below a prescribed degree are blind.  In particular, $\oc(f)$ is
unbounded even when the space, the induced maps on $\pi_1$ and homology, the
Lefschetz number, and the Nielsen number are fixed.

The main structural result solves the optimal-resolution problem for central
torus extensions.  Let a principal $\T^m$-bundle over $\T^n$ have
characteristic class $[\omega]\in H^2(\T^n;\Z^m)$, and let a compatible map
induce $A$ and $C$ on the base and fibre lattices.  In the finite-Reidemeister
regime, Theorem~\ref{thm:cohomologicalresolution} identifies the unique
maximal compatible resolving kernel and proves
\[
  \oc(F)=N(F)[\Z^n:R_{I-C}(\omega)],
\]
where $R_{I-C}(\omega)$ is the radical of the reduction of $[\omega]$ modulo
$(I-C)\Z^m$.  Thus the excess over the Nielsen number is determined by a
degree-two characteristic class, not by the linear fixed-point count alone.
The scalar circle-bundle formula further identifies the zero- and
maximal-penalty regimes block by block.  A heterogeneous non-Heisenberg
example shows that distinct characteristic blocks may contribute opposite
extremes to the same optimal resolving degree.

Two exact families show the range of the invariant.  For toral endomorphisms,
the complete profile is the divisor staircase
\[
  \VN_{f_A}(B)=\max\{d:d\mid |\det(I-A)|,\ d\le B\}.
\]
For scalar dilations on integral Heisenberg nilmanifolds, the characteristic
form is unimodular symplectic, and the cohomological theorem gives
\[
  \oc(F_{k,r})=(k-1)^{2r}(k^2-1)^{2r+1},
  \qquad
  \frac{\oc(F_{k,r})}{N(F_{k,r})}=(k^2-1)^{2r}.
\]
For $k=2$ the complete visibility profile has a single jump at the resolving
degree $3^{2r+1}$, while the example $F_{3,1}$ has
$\VN_{F_{3,1}}(8)=5\nmid32$.  Thus the nonabelian profile is not a toral
divisor staircase in disguise.  The iterate sequence yields an observer entropy and a rational
observer-complexity zeta function.  Theorem~\ref{thm:nondetermination} then
exhibits toral and Heisenberg systems with identical Nielsen and Lefschetz
sequences, Nielsen zeta functions, differential eigenvalues, and topological
entropy, but exponentially different finite-cover resolution sequences.

Sections~2--5 establish the observer formalism, homotopy invariance, and the
profinite reconstruction theorem.  Sections~6--7 give the realization and
delayed-visibility examples.  Section~8 computes the toral profile.
Section~\ref{sec:cohomological} proves the cohomological optimal-resolution
theorem, and Section~\ref{sec:heisenberg} develops its Heisenberg, entropy,
and zeta consequences.

\section{Reidemeister classes and finite observers}

Throughout the paper, $X$ is a finite connected CW complex, $x_0\in X$ is a
basepoint, and $f\colon X\to X$ is a cellular self-map.  We initially choose a
path from $x_0$ to $f(x_0)$ so that $f$ determines an endomorphism
\[
  \phi\in\End(G),\qquad G=\pi_1(X,x_0).
\]
The dependence on the basepoint, path, and lift is removed in
Section~\ref{sec:invariance}.

\begin{definition}
For $a,b\in G$, write $a\sim_\phi b$ if there exists $g\in G$ such that
\[
  b=g a\phi(g)^{-1}.
\]
The set of $\phi$-twisted conjugacy classes is denoted by $\RN(\phi)$.
\end{definition}

After choosing a lift of $f$ to the universal cover, the Reidemeister trace is
an element
\[
  \RT(f)=\sum_{\alpha\in\RN(\phi)} I_f(\alpha)[\alpha]
  \in \Z[\RN(\phi)]
\]
with finite support.  The coefficient $I_f(\alpha)$ is the index of the
corresponding fixed-point class.  Consequently,
\[
  N(f)=\#\supp\RT(f),
  \qquad
  L(f)=\sum_{\alpha} I_f(\alpha).
\]

\begin{definition}[Finite observer]
A finite observer for $(G,\phi)$ is a finite-index normal subgroup
$N\triangleleft G$ satisfying $\phi(N)\subseteq N$.  Set
\[
  Q_N=G/N,
\]
and let $q_N\colon G\twoheadrightarrow Q_N$ be the quotient map.  The
endomorphism induced by $\phi$ on $Q_N$ is denoted $\bar\phi_N$.
\end{definition}

Since $f_*(N)\subseteq N$, the subgroup $N$ determines an $f$-compatible finite regular
cover $X_N\to X$.  We regard the order $|Q_N|=[G:N]$ as the resolution cost of
the observer.

The quotient map induces
\[
  (q_N)_R\colon\RN(\phi)\longrightarrow\RN(\bar\phi_N),
  \qquad
  [a]_\phi\longmapsto[q_N(a)]_{\bar\phi_N}.
\]

\begin{definition}[Observed trace and visible Nielsen number]
The Reidemeister trace observed through $N$ is
\[
  \RT_N(f):=((q_N)_R)_\#\RT(f)
  \in\Z[\RN(\bar\phi_N)].
\]
Its visible Nielsen number is
\[
  V_N(f):=\#\supp\RT_N(f).
\]
\end{definition}

Explicitly, the coefficient at $\beta\in\RN(\bar\phi_N)$ is
\[
  \sum_{(q_N)_R(\alpha)=\beta} I_f(\alpha).
\]
Thus a finite observer can lose information in two distinct ways: different
classes may collide, and coefficients of opposite signs may cancel.

\begin{definition}[Finite-cover signature]
Let $E_f=\supp\RT(f)$.  For a finite observer $N$, define
\[
  R_N(f):=\#(q_N)_R(E_f),
  \qquad
  M_N(f):=\|\RT_N(f)\|_1,
\]
where $R_N(f)$ is the raw number of quotient classes hit by essential
Reidemeister classes and $M_N(f)$ is the surviving absolute index mass.  The
finite-cover signature is
\[
  \Sigma_N(f):=\bigl(R_N(f),V_N(f),M_N(f)\bigr).
\]
The collision defect and cancellation mass are
\[
  \operatorname{col}_N(f):=N(f)-R_N(f),
\]
\[
  \operatorname{can}_N(f)
  :=\frac12\bigl(\|\RT(f)\|_1-M_N(f)\bigr).
\]
\end{definition}

\begin{proposition}[Monotone observational signature]
\label{prop:signature}
If $N'\subseteq N$ are finite observers, then
\[
  R_N(f)\le R_{N'}(f),\qquad
  V_N(f)\le V_{N'}(f),\qquad
  M_N(f)\le M_{N'}(f).
\]
Consequently, both $\operatorname{col}_N(f)$ and
$\operatorname{can}_N(f)$ are nonincreasing under refinement.  Moreover,
$\operatorname{can}_N(f)$ is a nonnegative integer.
\end{proposition}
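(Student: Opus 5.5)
The plan is to factor the coarser quotient through the finer one and then use that pushforward along a map of finite sets can only merge support points and add coefficients. Since $N'\subseteq N$ are both normal, finite index and $\phi$-invariant, there is a canonical surjection $\pi\colon Q_{N'}\to Q_N$ with $\pi\circ q_{N'}=q_N$. Because $\phi(N')\subseteq N'$, we have $\pi\circ\bar\phi_{N'}=\bar\phi_N\circ\pi$. Hence $\pi$ sends $\bar\phi_{N'}$-twisted conjugacy classes to $\bar\phi_N$-twisted conjugacy classes, since $gb\bar\phi_{N'}(g)^{-1}\mapsto \pi(g)\pi(b)\bar\phi_N(\pi(g))^{-1}$. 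We therefore get an induced map $\pi_R\colon\RN(\bar\phi_{N'})\to\RN(\bar\phi_N)$ satisfying
\[
(q_N)_R=\pi_R\circ(q_{N'})_R .
\]
By functoriality of pushforward on finitely supported integer chains, this gives $\RT_N(f)=(\pi_R)_\#\RT_{N'}(f)$.

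From this factorization the three inequalities follow in turn. First, $(q_N)_R(E_f)=\pi_R\bigl((q_{N'})_R(E_f)\bigr)$, and the image of a finite set under a map has no more elements than the set, so $R_N\le R_{N'}$. Second, $(\pi_R)_\#$ of any chain is supported in the $\pi_R$-image of its support, which gives $V_N\le V_{N'}$. Third, by the triangle inequality each coefficient of $(\pi_R)_\#c$ is a sum of coefficients of $c$ over a fiber. Summing absolute values over fibers gives $\|(\pi_R)_\#c\|_1\le\|c\|_1$, and so $M_N\le M_{N'}$.

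Monotonicity of the defects is then immediate. Both $\operatorname{col}_N=N(f)-R_N$ and $\operatorname{can}_N=\tfrac12(\|\RT(f)\|_1-M_N)$ are a fixed quantity minus a nondecreasing one. For nonnegativity of $\operatorname{can}_N$, we apply the same $\ell^1$ contraction to $(q_N)_R$ itself, giving $M_N\le\|\RT(f)\|_1$. We could instead regard the trivial observer $N=G$ as refined by everything, but the direct argument is cleaner.

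The only step needing an extra idea is integrality of $\operatorname{can}_N$. The key is that for integers $a_i$, the sum $\sum|a_i|$ and $|\sum a_i|$ have the same parity, because $|x|\equiv x\pmod 2$. So in each fiber of $(q_N)_R$, the difference $\sum_{\alpha\in\text{fiber}}|I_f(\alpha)|-\bigl|\sum_{\alpha\in\text{fiber}}I_f(\alpha)\bigr|$ is a nonnegative even integer. Summing over the finitely many fibers meeting $E_f$ shows $\|\RT(f)\|_1-M_N(f)\in 2\Z_{\ge0}$, so $\operatorname{can}_N(f)$ is a nonnegative integer. I expect no real obstacle. The one point needing care is checking that $\pi_R$ is well defined, which uses exactly the compatibility $\phi(N')\subseteq N'$ (together with $N'\subseteq N$).
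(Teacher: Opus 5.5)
Your proposal is correct and follows the same route as the paper. You factor $(q_N)_R$ through $(q_{N'})_R$, treat $\RT_N(f)$ as the pushforward of $\RT_{N'}(f)$, read off the three inequalities from image cardinality, the support of a pushforward, and the triangle inequality, and get integrality of $\operatorname{can}_N(f)$ from the parity of $\sum_i|c_i|-|\sum_i c_i|$ summed over fibres. The only difference is that you spell out why the induced map $\RN(\bar\phi_{N'})\to\RN(\bar\phi_N)$ is well defined, which the paper leaves implicit; note that this step uses $\phi$-invariance of $N$ as well as of $N'$.
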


\begin{proof}
The map on Reidemeister classes for the coarse observer factors through that
for the fine observer.  Hence coarse observation can only merge raw classes,
which proves the first inequality.  The coarse observed trace is the
pushforward of the fine observed trace.  Every nonzero coarse atom has at
least one nonzero fine atom above it, proving the second inequality, while
the triangle inequality for coefficients gives the third.  The defect
statements follow immediately.  Finally, for integers $c_1,\dots,c_s$,
\[
  \sum_i|c_i|-\left|\sum_i c_i\right|
\]
is a nonnegative even integer.  Summing this identity over the observer
fibers proves the last assertion.
\end{proof}

\begin{remark}[Regular-cover traces and the present viewpoint]
Reidemeister traces for regular covering spaces under the same compatibility
condition $\phi(N)\subseteq N$ are classical; see
\cite[Section~6.7]{Ponto} and compare \cite[Section~5]{Staecker}.  The
observed trace $\RT_N(f)$ is the corresponding mod-$N$ coarsening of the
base Reidemeister trace: each quotient class receives the sum of the
fixed-point indices of the ordinary classes above it.  The new role of this
classical cover-level trace in the present paper is as an observational datum
to be optimized over all compatible finite covers, with separate control of
class collisions and index cancellation.
\end{remark}

\begin{theorem}[Cover-lift interpretation]
\label{thm:coverlift}
Let $p_N\colon X_N\to X$ be the regular cover associated with a finite
observer $N$, with deck group $Q_N$.  Choose a lift
$\widetilde f\colon\widetilde X\to\widetilde X$ to the universal cover.  For
each $\alpha\in G$, the map $\alpha\widetilde f$ descends to a lift
\[
  f_{\alpha,N}\colon X_N\to X_N
\]
of $f$.  Two such lifts $f_{\alpha,N}$ and $f_{\beta,N}$ are conjugate by a
deck transformation if and only if
\[
  (q_N)_R([\alpha]_\phi)=(q_N)_R([\beta]_\phi).
\]
Therefore the atoms of $\RT_N(f)$ are precisely deck-conjugacy classes of
lifts of $f$ to $X_N$, weighted by the sum of the indices of the ordinary
Nielsen classes producing those lifts.
\end{theorem}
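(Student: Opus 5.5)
We identify $\widetilde X$ with the universal cover on which $G$ acts by deck transformations, and write $X_N=\widetilde X/N$. The lift $\widetilde f$ satisfies the equivariance rule
\[
  \widetilde f(g\cdot\widetilde x)=\phi(g)\,\widetilde f(\widetilde x)\qquad(g\in G).
\]

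\emph{Descent.} Fix $\alpha\in G$ and $n\in N$. Then
\[
  \alpha\widetilde f(n\widetilde x)=\alpha\phi(n)\widetilde f(\widetilde x)=\bigl(\alpha\phi(n)\alpha^{-1}\bigr)\,\alpha\widetilde f(\widetilde x).
\]
Since $\phi(N)\subseteq N$ and $N$ is normal, the element $\alpha\phi(n)\alpha^{-1}$ lies in $N$. Hence $\alpha\widetilde f$ maps $N$-orbits to $N$-orbits and descends to a map $f_{\alpha,N}$ on $X_N$. This map covers $f$ because $\alpha\widetilde f$ covers $f$. The lift depends only on $q_N(\alpha)$: for $n\in N$ we have $n\alpha\widetilde f$ and $\alpha\widetilde f$ differing by an element of $N$, which acts trivially on $X_N$. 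Conversely, every lift of $f$ to $X_N$ is of this form. Indeed, any lift lifts further to $\widetilde X$, and every lift of $f$ to $\widetilde X$ equals $\alpha\widetilde f$ for some $\alpha\in G$.

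\emph{Conjugacy criterion.} The deck group of $X_N$ is $Q_N$, with $\bar g\in Q_N$ acting as the map induced by $g$. For $g\in G$, using the equivariance rule,
\[
  g(\alpha\widetilde f)g^{-1}=g\alpha\phi(g)^{-1}\widetilde f.
\]
Passing to $X_N$, this gives
\[
  \bar g\, f_{\alpha,N}\,\bar g^{-1}=f_{g\alpha\phi(g)^{-1},N}.
\]
We also need a uniqueness statement: two lifts $f_{\gamma,N}$ and $f_{\beta,N}$ agree if and only if $q_N(\gamma)=q_N(\beta)$. The "if" direction is the descent remark above. For "only if", suppose $f_{\gamma,N}=f_{\beta,N}$. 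Then $\gamma\widetilde f$ and $\beta\widetilde f$ cover the same map into $X_N$, so they differ by an element of $N$ at each point. Since $\widetilde X$ is connected and $N$ acts freely, that element is constant, so $\beta\in N\gamma$.

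Combining these: $f_{\beta,N}$ is deck-conjugate to $f_{\alpha,N}$ if and only if $q_N(\beta)=q_N(g\alpha\phi(g)^{-1})$ for some $g\in G$. Since $q_N$ is surjective, this says exactly that $q_N(\beta)=\bar g\,q_N(\alpha)\,\bar\phi_N(\bar g)^{-1}$ for some $\bar g\in Q_N$. That is the condition $(q_N)_R([\alpha]_\phi)=(q_N)_R([\beta]_\phi)$.

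\emph{Weighting.} The ordinary Nielsen class labelled by $[\alpha]_\phi$ is $p(\Fix(\alpha\widetilde f))$, where $p\colon\widetilde X\to X$ is the universal covering map, and its index is $I_f(\alpha)$. Each such class is attached to the lift $f_{\alpha,N}$. By the criterion, the ordinary classes attached to a given deck-conjugacy class of lifts are exactly the fibre of $(q_N)_R$ over the corresponding quotient class. By definition of the pushforward, the coefficient of $\RT_N(f)$ at that atom is the sum of the indices of the classes in this fibre.

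The step needing most care is the uniqueness statement: the correspondence between lifts to $X_N$ and elements of $Q_N$ must be bijective for the criterion to be an equivalence. With the free, properly discontinuous action of $G$ on the connected space $\widetilde X$ this is routine. Beyond that, the main risk is a sign or side convention, namely whether the twisted conjugacy is written $g\alpha\phi(g)^{-1}$ or $\alpha\mapsto\phi(g)\alpha g^{-1}$. I would fix the convention by the displayed equivariance rule, which matches the paper's definition of $\sim_\phi$.
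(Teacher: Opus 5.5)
Your proposal is correct and follows the paper's proof. It uses the same descent computation $\alpha\widetilde f(n\widetilde x)=\alpha\phi(n)\alpha^{-1}\,\alpha\widetilde f(\widetilde x)$ with $\alpha\phi(n)\alpha^{-1}\in N$, the same conjugation identity $\bar g\,f_{\alpha,N}\,\bar g^{-1}=f_{g\alpha\phi(g)^{-1},N}$, and reads the weighting off the pushforward. Two of your steps are not written out in the paper, which uses them tacitly when it concludes that deck conjugacy of lifts is exactly $\bar\phi_N$-twisted conjugacy of their labels: the check that $f_{\gamma,N}=f_{\beta,N}$ forces $q_N(\gamma)=q_N(\beta)$ (by unique lifting and freeness of the deck action), and the observation that every lift to $X_N$ is some $f_{\alpha,N}$.
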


\begin{proof}
For $n\in N$ one has
\[
  \alpha\widetilde f(n\widetilde x)
  =\alpha\phi(n)\widetilde f(\widetilde x).
\]
Since $\phi(n)\in N$ and $N$ is normal,
$\alpha\phi(n)\alpha^{-1}\in N$; hence $\alpha\widetilde f$ descends to
$X_N$.  If $d_{q_N(g)}$ denotes the deck transformation represented by
$g\in G$, then
\[
  d_{q_N(g)}\,f_{\alpha,N}\,d_{q_N(g)}^{-1}
  =f_{g\alpha\phi(g)^{-1},N}.
\]
Thus deck conjugacy of the descended lifts is exactly
$\bar\phi_N$-twisted conjugacy of their labels in $Q_N$.  The statement about
weights is the definition of the pushforward trace.
\end{proof}

\begin{remark}
The observed trace is not, in general, the Reidemeister trace of one chosen
lift $f_{\alpha,N}\colon X_N\to X_N$.  Rather, it is the pushforward of the
base Reidemeister trace along
\[
  \RN(\phi)\longrightarrow\RN(\bar\phi_N),
\]
or, equivalently, an index-weighted distribution over the deck-conjugacy
classes of all descended lifts.  This is the distinction between the
single-lift trace and the regular-cover grouping used in the resolution
problem; compare \cite[Section~6.7]{Ponto}.
\end{remark}

\section{Homotopy invariance and independence of choices}
\label{sec:invariance}

We now verify that the numerical invariants introduced below do not depend on
the auxiliary based choices.

\begin{lemma}[Inner change of twisting]
\label{lem:innerchange}
Let $g\in G$ and let $\phi'=\Inn(g)\circ\phi$.  Then
\[
  \tau_g\colon\RN(\phi)\longrightarrow\RN(\phi'),
  \qquad
  \tau_g([a]_\phi)=[a g^{-1}]_{\phi'}
\]
is a bijection.  Every normal $\phi$-invariant subgroup is also
$\phi'$-invariant, and for every such subgroup the induced quotient-class
diagrams commute with $\tau_g$.
\end{lemma}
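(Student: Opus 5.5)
The plan is to check the three claims of the lemma directly from the definitions, using only the group-theoretic identity $\phi'(x)=g\phi(x)g^{-1}$. The first step is well-definedness of $\tau_g$. Suppose $b=ha\phi(h)^{-1}$. We need $bg^{-1}\sim_{\phi'}ag^{-1}$, and the computation is
\[
  h(ag^{-1})\phi'(h)^{-1}
  =h a g^{-1}\,g\phi(h)^{-1}g^{-1}
  =\bigl(ha\phi(h)^{-1}\bigr)g^{-1}
  =bg^{-1}.
\]
So the same element $h$ witnesses $\phi'$-twisted conjugacy, and $\tau_g$ is well defined.

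The second step is bijectivity, for which I would write down an explicit inverse. The map
\[
  \sigma_g\colon\RN(\phi')\longrightarrow\RN(\phi),
  \qquad
  [c]_{\phi'}\longmapsto[cg]_\phi
\]
is well defined by the same identity read backwards: if $d=hc\phi'(h)^{-1}$, then $dg=hc\,g\phi(h)^{-1}g^{-1}g=h(cg)\phi(h)^{-1}$. The two maps are mutually inverse on representatives, since $a\mapsto ag^{-1}\mapsto a$ and $c\mapsto cg\mapsto c$. This step is the only one needing a calculation, and it is the same calculation as the first.

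The third step concerns the observers. If $N$ is normal and $\phi(N)\subseteq N$, then $\phi'(N)=g\phi(N)g^{-1}\subseteq gNg^{-1}=N$, so $N$ is also $\phi'$-invariant. On $Q_N$ the induced endomorphism is $\bar\phi'_N=\Inn(q_N(g))\circ\bar\phi_N$. The first two steps therefore apply verbatim in the finite group $Q_N$ and give a bijection
\[
  \tau_{q_N(g)}\colon\RN(\bar\phi_N)\longrightarrow\RN(\bar\phi'_N).
\]
Commutativity of the square then reduces to a computation on representatives:
\[
  (q_N)_R\,\tau_g([a]_\phi)=[q_N(a)q_N(g)^{-1}]
  =\tau_{q_N(g)}\,(q_N)_R([a]_\phi).
\]

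No step is a genuine obstacle. The only point needing care is the side convention: $\tau_g$ multiplies on the right by $g^{-1}$, and this is forced by placing $\Inn(g)$ on the left of $\phi$. I would also note that the square commuting means these bijections transport pushforwards compatibly, which is what later invariance arguments will use.
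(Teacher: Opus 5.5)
Your proposal is correct and follows essentially the same route as the paper. Well-definedness comes from the identity $h(ag^{-1})\phi'(h)^{-1}=bg^{-1}$, and the inverse is your $\sigma_g$, which is the paper's ``analogous formula with $g^{-1}$'' (namely $\tau_{g^{-1}}$ for $\phi=\Inn(g^{-1})\circ\phi'$). Invariance follows from $\phi'(N)=g\phi(N)g^{-1}\subseteq N$, and commutativity comes from descending the same formula to $Q_N$; you spell out this last step more explicitly via $\bar\phi'_N=\Inn(q_N(g))\circ\bar\phi_N$.
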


\begin{proof}
If $b=h a\phi(h)^{-1}$, then
\[
  b g^{-1}
  =h(a g^{-1})\bigl(g\phi(h)g^{-1}\bigr)^{-1}
  =h(a g^{-1})\phi'(h)^{-1},
\]
so $\tau_g$ is well-defined; the analogous formula with $g^{-1}$ gives its
inverse.  If $N\triangleleft G$ and $\phi(N)\subseteq N$, then
\[
  \phi'(N)=g\phi(N)g^{-1}\subseteq N.
\]
The same formula descends to every quotient and proves commutativity.
\end{proof}

\begin{proposition}[Unbased homotopy invariance]
\label{prop:homotopyinvariance}
The observed signatures, visibility profile, detection complexity, and
observer complexity depend only on the unbased homotopy class of $f$.
Changing the basepoint, the base path, or the chosen lift only canonically
relabels the Reidemeister classes and does not change any finite quotient
order or any support-separation property.
\end{proposition}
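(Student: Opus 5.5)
The plan is to show that every auxiliary choice changes the data $(G,\phi,\RT(f))$ only by an isomorphism of the relevant structures. Such an isomorphism identifies finite observers with finite observers of the same index. It also commutes with the quotient-class maps $(q_N)_R$ and carries the trace to the trace. Since $R_N$, $V_N$, $M_N$, the profile and the thresholds are all defined by optimizing over observers of given index, the invariants are then unchanged.

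\emph{Step 1: lift and base path at a fixed basepoint.} Fix $x_0$. Changing the base path, or equivalently the lift $\widetilde f$, replaces $\widetilde f$ by $g\widetilde f$ for some $g\in G$, and hence replaces $\phi$ by $\phi'=\Inn(g)\circ\phi$. By Lemma~\ref{lem:innerchange}, $\phi$ and $\phi'$ have the same normal invariant subgroups. The bijection $\tau_g$ commutes with every $(q_N)_R$. It remains to check that $\tau_g$ carries $\RT(f)$ computed with $\widetilde f$ to $\RT(f)$ computed with $g\widetilde f$. This follows because the fixed-point class labelled $\alpha$ is $p(\Fix(\alpha\widetilde f))$, and $\alpha\widetilde f=(\alpha g^{-1})(g\widetilde f)$, so the same class receives the label $\alpha g^{-1}$ and keeps its index. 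The pushforwards $\RT_N(f)$ then correspond atom by atom, so $\Sigma_N(f)$ is unchanged for every $N$.

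\emph{Step 2: basepoint.} A path $\gamma$ from $x_0$ to $x_1$ gives an isomorphism $\gamma_\#\colon\pi_1(X,x_0)\to\pi_1(X,x_1)$. Choose base data at $x_1$ by conjugating the base path at $x_0$ with $\gamma$ and $f\circ\gamma$. Then $\gamma_\#$ intertwines the two endomorphisms up to an inner automorphism, and Step 1 absorbs that inner factor. At the level of universal covers, the identification is the identity map of $\widetilde X$ with a change of reference lift. It therefore sends $\Fix$-sets to $\Fix$-sets and preserves indices. An isomorphism preserves normality, invariance and index of subgroups, so it induces an index-preserving bijection of observers compatible with the trace pushforwards.

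\emph{Step 3: homotopy.} Let $H\colon f\simeq f'$ be a homotopy. The path $t\mapsto H(x_0,t)$ relates the base paths, and with this choice $\phi'=\phi$ as endomorphisms. Lifting $H$ to $\widetilde H\colon\widetilde f\simeq\widetilde f'$ keeps Reidemeister labels fixed. By homotopy invariance of the Reidemeister trace \cite{Jiang,Staecker}, $\RT(f)=\RT(f')$ in $\Z[\RN(\phi)]$. Hence the observed traces agree for every $N$. Combining Steps 1–3 gives the statement, because all the invariants are functions of the family $\{(|Q_N|,\Sigma_N(f))\}_N$ together with the separation property of $(q_N)_R$ on $E_f$, and this family is preserved.

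The main obstacle I expect is Step 3, together with the bookkeeping in Step 2. One must verify that the homotopy-lifting identification of Reidemeister classes is the same one under which $\RT$ is known to be invariant. I would handle this by citing the standard invariance of $\RT$ under lifted homotopies and transporting the result along the explicit bijections above. No further computation is needed, since the finite-quotient data are purely group-theoretic and each bijection is natural in $N$.
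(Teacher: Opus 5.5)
Your proposal is correct and follows essentially the same route as the paper. The paper treats a change of base path or lift as the inner twist $\Inn(g)\circ\phi$ via Lemma~\ref{lem:innerchange}, a change of basepoint as transport along an isomorphism of fundamental groups, and homotopy invariance by citing the standard invariance of $\RT$ from \cite{Jiang,Staecker}. Your additional checks, that the class labelled $\alpha$ is relabelled $\alpha g^{-1}$ (matching $\tau_g$) and that the base path built from $t\mapsto H(x_0,t)$ gives $\phi'=\phi$, make explicit what the paper cites as the standard change-of-lift and homotopy formulas.
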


\begin{proof}
Changing the path used to define the induced endomorphism replaces $\phi$ by
an inner twist $\Inn(g)\circ\phi$.  By Lemma~\ref{lem:innerchange}, the same
finite-index normal subgroups are admissible, their indices are unchanged,
and the Reidemeister-class sets are related by bijections commuting with all
quotient maps.  The standard change-of-lift formula for the Reidemeister trace
is exactly this relabeling.  A change of basepoint conjugates the fundamental
group by an isomorphism and carries finite-index invariant normal subgroups to
subgroups of the same index.  Finally, the Reidemeister trace is invariant
under admissible homotopy up to the canonical correspondence of Reidemeister
classes; see \cite{Jiang,Staecker}.  Hence the observed signatures, support cardinalities, detection and finite
resolvability, the visibility profile, and both minimum indices are
unchanged.
\end{proof}

\section{Profinite reconstruction and the blind subgroup}

Assume in this section that $G$ is finitely generated.  Let $\wideG$ be its
profinite completion.  The endomorphism $\phi$ extends continuously to an
endomorphism $\phihat$ of $\wideG$.

\begin{lemma}[Cofinality of endomorphism-invariant subgroups]
\label{lem:cofinal}
Let $G$ be finitely generated and let $\phi\in\End(G)$.  The finite-index
normal subgroups $N$ satisfying $\phi(N)\subseteq N$ form a cofinal system
among all finite-index normal subgroups of $G$.
\end{lemma}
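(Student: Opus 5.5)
Given a finite-index normal subgroup $K\triangleleft G$, I will produce a finite observer contained in $K$. The natural candidate is
\[
  N:=\bigcap_{\psi\in\Hom(G,G/K)}\ker\psi ,
\]
the intersection of the kernels of all homomorphisms from $G$ to the finite group $G/K$. Since $G$ is finitely generated, a homomorphism $G\to G/K$ is determined by the images of a fixed finite generating set. There are therefore only finitely many such homomorphisms, and each kernel has index at most $|G/K|$. So $N$ is a finite intersection of finite-index subgroups and has finite index. It is contained in $K$ because the quotient map $G\to G/K$ is one of the $\psi$.

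Normality and $\phi$-invariance both come from one observation: precomposition with any endomorphism of $G$ permutes $\Hom(G,G/K)$ into itself. For $\phi$-invariance, take $n\in N$ and any $\psi\in\Hom(G,G/K)$. Then $\psi\circ\phi$ also lies in $\Hom(G,G/K)$, so $\psi(\phi(n))=(\psi\circ\phi)(n)=1$. Hence $\phi(n)\in N$, that is, $\phi(N)\subseteq N$. Normality follows the same way using inner automorphisms in place of $\phi$; it also follows directly because each kernel is normal. In fact $N$ is fully invariant, so no special property of $\phi$ is used at all.

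Cofinality is then immediate: every finite-index normal $K$ contains a finite observer $N$. The family of observers is also directed downward, because the intersection of two observers is again finite-index, normal and $\phi$-invariant.

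The only point needing care is the finiteness of $\Hom(G,G/K)$, and this is exactly where finite generation of $G$ enters. Once the set of homomorphisms is finite, everything else is formal.
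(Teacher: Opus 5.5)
Your proof is correct and uses the same idea as the paper: intersect the kernels of a finite family of homomorphisms to finite groups that is closed under precomposition with endomorphisms of $G$. This yields a fully invariant finite-index normal subgroup contained in $K$. The only difference is that the paper takes all homomorphisms to all finite groups of order at most $[G:K]$, whereas you use only the single target $G/K$, which is slightly more economical and equally valid.
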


\begin{proof}
Let $K\triangleleft G$ have index $n$.  Define
\[
  C_n=\bigcap_{\substack{F\text{ finite},\ |F|\le n\\
                         \psi\in\operatorname{Hom}(G,F)}}\ker\psi.
\]
There are only finitely many isomorphism types of finite groups of order at
most $n$, and since $G$ is finitely generated there are only finitely many
homomorphisms from $G$ to each of them.  Thus $C_n$ has finite index.  It is
fully invariant: if $x\in C_n$ and $\eta\in\End(G)$, then for every such
$\psi$ one has $\psi(\eta(x))=(\psi\circ\eta)(x)=e$.  In particular,
$\phi(C_n)\subseteq C_n$.  Finally, the quotient map $G\to G/K$ occurs among
the homomorphisms in the intersection, so $C_n\subseteq K$.
\end{proof}

There is a natural map
\[
  j_\phi\colon\RN(\phi)\longrightarrow\RN(\phihat)
\]
induced by the canonical homomorphism $G\to\wideG$.

The next statement is the twisted-conjugacy analogue of the classical
profinite separability criterion; compare \cite{FelTro,Tertooy}.  We record
it in order to identify precisely the observational equivalence relation used
below, rather than as a new separability theorem.

\begin{theorem}[Finite observers versus the profinite completion]
\label{thm:profinite}
For $a,b\in G$, the following are equivalent:
\begin{enumerate}[label=(\alph*)]
\item $j_\phi([a]_\phi)=j_\phi([b]_\phi)$ in $\RN(\phihat)$;
\item $(q_N)_R([a]_\phi)=(q_N)_R([b]_\phi)$ for every finite observer $N$.
\end{enumerate}
\end{theorem}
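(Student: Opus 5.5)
The plan is to prove (a)$\Rightarrow$(b) by passing to finite quotients of $\wideG$, and (b)$\Rightarrow$(a) by a compactness argument on closed subsets of $\wideG$. Throughout I will use Lemma~\ref{lem:cofinal}: since $G$ is finitely generated, the finite observers are cofinal among finite-index normal subgroups. Hence $\wideG=\varprojlim_N Q_N$ with $N$ ranging over observers only. Each $q_N$ extends to a continuous surjection $\hat q_N\colon\wideG\to Q_N$ satisfying $\hat q_N\circ\phihat=\bar\phi_N\circ\hat q_N$. This identity holds on the dense image of $G$ and both sides are continuous, so it holds on all of $\wideG$.

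\emph{(a)$\Rightarrow$(b).} Suppose $b=\gamma a\phihat(\gamma)^{-1}$ in $\wideG$ for some $\gamma\in\wideG$. Apply $\hat q_N$ to get
\[
  q_N(b)=\hat q_N(\gamma)\,q_N(a)\,\bar\phi_N(\hat q_N(\gamma))^{-1}.
\]
Since $\hat q_N$ is surjective onto $Q_N$, the element $\hat q_N(\gamma)$ is some $q_N(g)$ with $g\in G$. So the two classes agree in $\RN(\bar\phi_N)$, which is (b).

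\emph{(b)$\Rightarrow$(a).} For each observer $N$, consider
\[
  S_N=\{\gamma\in\wideG:\ \hat q_N(b)=\hat q_N(\gamma)\,\hat q_N(a)\,\bar\phi_N(\hat q_N(\gamma))^{-1}\}.
\]
Membership in $S_N$ depends only on $\hat q_N(\gamma)$, so $S_N$ is a union of cosets of the open subgroup $\ker\hat q_N$. It is therefore closed. By (b) it is nonempty. The family is directed downward: if $N'\subseteq N$ then $S_{N'}\subseteq S_N$, because the $N$-condition is the image of the $N'$-condition under $Q_{N'}\to Q_N$. Given finitely many observers, their intersection is again an observer, since finite-index, normality and $\phi$-invariance are all preserved. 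So the family has the finite intersection property. By compactness of $\wideG$, some $\gamma$ lies in $\bigcap_N S_N$. Then $b$ and $\gamma a\phihat(\gamma)^{-1}$ have the same image in every $Q_N$. Because the observers are cofinal, these images separate points of $\wideG$, and hence $b=\gamma a\phihat(\gamma)^{-1}$.

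The main obstacle is the bookkeeping needed to use observers rather than all finite quotients. That is, one must confirm that $\phihat$ is compatible with the inverse system indexed by observers, and that observers separate points of $\wideG$. Both follow from the cofinality in Lemma~\ref{lem:cofinal}, so once that is in place the rest is the routine compactness argument above.
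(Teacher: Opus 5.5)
Your proposal is correct and follows essentially the same route as the paper. For (a)$\Rightarrow$(b) you push a profinite twisted conjugacy down to each finite quotient; for (b)$\Rightarrow$(a) you use a finite-intersection/compactness argument on the closed sets of candidate conjugators, with Lemma~\ref{lem:cofinal} guaranteeing that observers separate points of $\wideG$. Your explicit check, via density and continuity, that $\hat q_N\circ\phihat=\bar\phi_N\circ\hat q_N$ fills in a step the paper leaves implicit.
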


\begin{proof}
If $a$ and $b$ are $\phihat$-twisted conjugate, then their images are twisted
conjugate in every finite quotient, proving (a)$\Rightarrow$(b).

Conversely, suppose (b) holds.  For each finite observer $N$, let
\[
  C_N=\left\{h\in\wideG:
  q_N(b)=q_N(h a\phihat(h)^{-1})\right\},
\]
where $q_N$ also denotes the continuous extension $\wideG\to G/N$.
Each $C_N$ is nonempty by assumption and closed in the compact group
$\wideG$.  If $N_1,\dots,N_s$ are finite observers, then
$N=\bigcap_iN_i$ is again a finite observer and
\[
  C_N\subseteq\bigcap_i C_{N_i}.
\]
Thus the family $\{C_N\}$ has the finite intersection property.  Compactness
gives an element $h\in\bigcap_N C_N$.  By Lemma~\ref{lem:cofinal}, the
closures of finite observers form a neighborhood basis at the identity in
$\wideG$.  Hence
\[
  b=h a\phihat(h)^{-1},
\]
which proves (a).
\end{proof}

\begin{definition}[Finite-observer blind subgroup]
Define
\[
  \mathcal B_\phi
  :=\bigcap_N\ker\left(((q_N)_R)_\#:
  \Z[\RN(\phi)]\to\Z[\RN(\bar\phi_N)]\right),
\]
where the intersection is over all finite observers.
\end{definition}
Despite the terminology, $\mathcal B_\phi$ is a subgroup of the free
abelian group $\Z[\RN(\phi)]$, not a subgroup of $G$.

\begin{corollary}[Exact blind-kernel description]
\label{cor:blind}
The blind subgroup satisfies
\[
  \mathcal B_\phi=\ker (j_\phi)_\#
  =\left\langle
  [\alpha]-[\beta]:j_\phi(\alpha)=j_\phi(\beta)
  \right\rangle.
\]
Consequently,
\[
  \Z[\RN(\phi)]/\mathcal B_\phi
  \cong \Z[\im j_\phi].
\]
\end{corollary}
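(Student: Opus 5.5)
The plan is to reduce the statement to a general fact about pushforwards along maps of sets, and then use Theorem~\ref{thm:profinite} to identify the equivalence relation involved. For any map of sets $\pi\colon S\to T$, the kernel of $\pi_\#\colon\Z[S]\to\Z[T]$ is spanned by the differences $[s]-[s']$ with $\pi(s)=\pi(s')$. Write $x=\sum_s c_s[s]$. Then $\pi_\#x=0$ exactly when, for every fibre $\pi^{-1}(t)$, the coefficients $c_s$ with $s$ in that fibre sum to zero. Since $x$ has finite support, in each fibre we fix a base point $s_0$ and write the fibre part of $x$ as $\sum_{s}c_s([s]-[s_0])$, using $\sum_s c_s=0$. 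The reverse inclusion is immediate. The same argument shows $\Z[S]/\ker\pi_\#\cong\Z[\pi(S)]$, because $\pi_\#$ maps onto the free abelian group on the image.

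Next I identify $\mathcal B_\phi$ with $\ker(j_\phi)_\#$. By the general fact, each $\ker((q_N)_R)_\#$ is the set of finitely supported $x$ whose coefficient sums vanish on every fibre of $(q_N)_R$. Suppose $x\in\mathcal B_\phi$. I want to show $x$ has vanishing coefficient sums on every fibre of $j_\phi$. Let $E=\supp x$, which is finite. Pick any two elements $\alpha,\beta\in E$ with $j_\phi(\alpha)\ne j_\phi(\beta)$. By Theorem~\ref{thm:profinite}, (b) fails for this pair, so some finite observer $N_{\alpha\beta}$ separates them.

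I then intersect the finitely many $N_{\alpha\beta}$ to get a single observer $N$. It is again a finite observer: a finite intersection of normal, finite-index, $\phi$-invariant subgroups is normal, of finite index and $\phi$-invariant, and it refines each $N_{\alpha\beta}$. This $N$ separates every pair of elements of $E$ that lie in distinct $j_\phi$-fibres, since $(q_{N_{\alpha\beta}})_R$ factors through $(q_N)_R$. Conversely, by (a)$\Rightarrow$(b), elements in the same $j_\phi$-fibre are never separated by any observer. Hence on $E$ the fibres of $(q_N)_R$ coincide with the fibres of $j_\phi$. Because $x\in\ker((q_N)_R)_\#$, its coefficients sum to zero on each $j_\phi$-fibre, that is, $x\in\ker(j_\phi)_\#$.

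For the reverse inclusion, (a)$\Rightarrow$(b) shows that $(q_N)_R$ is constant on the fibres of $j_\phi$. So each generator $[\alpha]-[\beta]$ with $j_\phi(\alpha)=j_\phi(\beta)$ lies in every $\ker((q_N)_R)_\#$. Finally, applying the general fact to $\pi=j_\phi$ gives both the generator description and the quotient isomorphism $\Z[\RN(\phi)]/\mathcal B_\phi\cong\Z[\im j_\phi]$.

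The main obstacle is the step that passes from infinitely many observers to one. A pointwise statement over all $N$ does not immediately give fibre cancellation for the profinite relation. The finite support of $x$, together with the closure of finite observers under finite intersection, is what lets a single observer capture the $j_\phi$-fibre structure on $E$.
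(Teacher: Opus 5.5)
Your proof is correct and follows the paper's argument. You use Theorem~\ref{thm:profinite} for pairwise separation, intersect finitely many pair-separating observers to get one observer whose fibres on the finite support agree with the $j_\phi$-fibres, and apply the map-of-sets description of $\ker\pi_\#$; the only cosmetic difference is that you argue directly from $x\in\mathcal B_\phi$, whereas the paper argues contrapositively from an element with nonzero image under $(j_\phi)_\#$.
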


\begin{proof}
Theorem~\ref{thm:profinite} says that two basis elements have the same image
under every finite observer exactly when they have the same image under
$j_\phi$.  It remains to pass from basis elements to finite linear
combinations.  Let
\[
  \omega=\sum_{i=1}^s c_i[\alpha_i]
\]
have nonzero image under $(j_\phi)_\#$.  After collecting terms in the same
$j_\phi$-fiber, we may assume that the $j_\phi(\alpha_i)$ are pairwise
distinct and all $c_i$ are nonzero.  By
Theorem~\ref{thm:profinite}, every pair is separated by some finite observer;
the product of the finitely many pair-separating observers separates all
$\alpha_i$ simultaneously.  Hence the image of $\omega$ in that finite
observer is nonzero.  Therefore the intersection of the finite-observer
kernels equals $\ker(j_\phi)_\#$.  Finally, for a map of sets, the kernel of
the induced homomorphism between free abelian groups is generated by the
differences of basis elements in the same fiber.  This proves all assertions.
\end{proof}

\begin{figure}[H]
\centering
\includegraphics[width=0.92\textwidth]{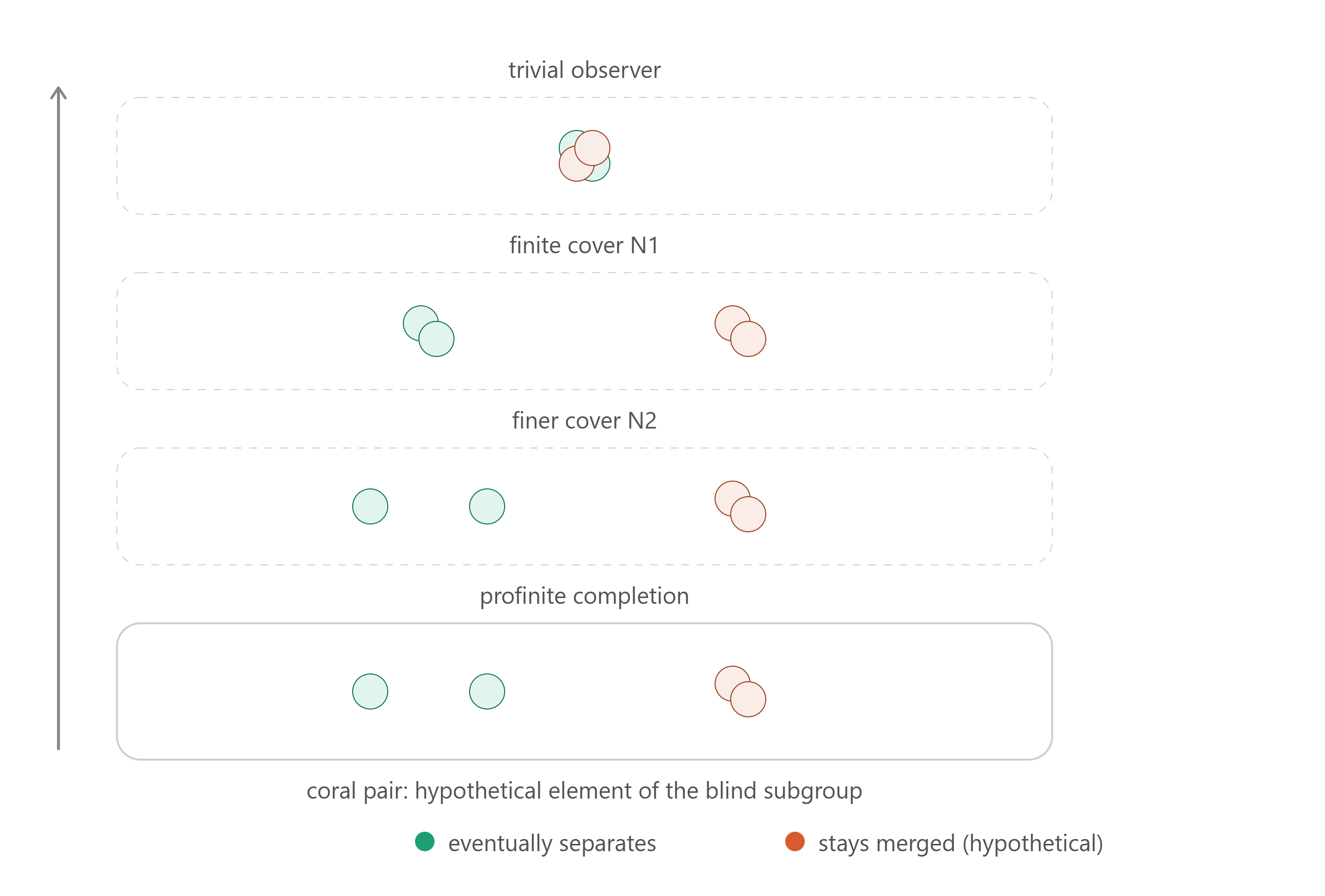}
\caption{A profinite tower of finite observers.  The green pair eventually
separates at a finite level, whereas the orange pair remains merged in every
finite compatible quotient.  The latter behavior is realized topologically
in Theorem~\ref{thm:permanentblindness}.  By Corollary~\ref{cor:blind},
persistent indistinguishability of this kind is exactly the information
carried by the finite-observer blind subgroup.}
\label{fig:profiniteblind}
\end{figure}

\begin{remark}
For a fixed endomorphism $\phi$, the condition $\mathcal B_\phi=0$ is
exactly finite-quotient separability of distinct $\phi$-twisted conjugacy
classes.  When $\phi$ is an automorphism this is the usual
$\phi$-twisted-conjugacy-separability condition.  The point of Corollary~\ref{cor:blind} is the fixed-point trace
interpretation: it identifies exactly which linear combinations of Nielsen
classes are invisible to every finite compatible cover.
\end{remark}

\section{Visibility profiles and observer complexity}
\label{sec:profile}

\begin{definition}[Visibility profile]
For $B\in\N$, define
\[
  \VN_f(B)
  :=\max\left\{V_N(f): N\triangleleft G,\ \phi(N)\subseteq N,
  \ [G:N]\le B\right\}.
\]
The trivial quotient guarantees that the set is nonempty.
\end{definition}

\begin{definition}[Observer complexity]
Let
\[
  E_f:=\supp\RT(f)\subset\RN(\phi).
\]
The observer complexity of $f$ is
\[
  \oc(f):=
  \min\left\{[G:N]:(q_N)_R|_{E_f}\text{ is injective}\right\},
\]
with $\oc(f)=\infty$ if no finite observer resolves $E_f$.  If
$N(f)\le1$, the trivial observer gives $\oc(f)=1$.
\end{definition}

\begin{definition}[Detection complexity]
The detection complexity of $f$ is
\[
  \dc(f):=
  \min\left\{[G:N]:\RT_N(f)\ne0\right\},
\]
with $\dc(f)=\infty$ if every finite observer sees zero trace.
\end{definition}

\begin{definition}[Index-mass profile and cancellation-free complexity]
For $B\in\N$, define
\[
  \MN_f(B)
  :=\max\left\{M_N(f):N\triangleleft G,\ \phi(N)\subseteq N,
  \ [G:N]\le B\right\}.
\]
The cancellation-free complexity of $f$ is
\[
  \cfc(f)
  :=\min\left\{[G:N]:M_N(f)=\|\RT(f)\|_1\right\},
\]
with $\cfc(f)=\infty$ if no finite observer preserves the full absolute
index mass.  When $\RT(f)=0$, the trivial observer gives $\cfc(f)=1$.
\end{definition}

\begin{proposition}[Detection, cancellation, and resolution]
\label{prop:threelevels}
The following hold.
\begin{enumerate}[label=(\roman*)]
\item $\MN_f(B)$ is nondecreasing and
      $0\le \MN_f(B)\le\|\RT(f)\|_1$.
\item
      \[
        \cfc(f)=\min\{B:\MN_f(B)=\|\RT(f)\|_1\}.
      \]
\item If $N(f)>0$, then, in $\N\cup\{\infty\}$,
      \[
        \dc(f)\le\cfc(f)\le\oc(f).
      \]
\item
      \[
        \cfc(f)=1
        \quad\Longleftrightarrow\quad
        |L(f)|=\|\RT(f)\|_1.
      \]
      In particular, coefficients of one sign imply $\cfc(f)=1$.
\item If $L(f)\ne0$, then $\dc(f)=1$.
\item If $N'\subseteq N$ and $\RT_N(f)\ne0$, then
      $\RT_{N'}(f)\ne0$.
\end{enumerate}
\end{proposition}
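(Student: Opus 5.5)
The plan is to derive every item from two elementary facts: the monotonicity of Proposition~\ref{prop:signature}, and the observation that the trivial observer $N=G$ has a one-point Reidemeister set. For $N=G$ the observed trace is therefore $L(f)$ times the unique class, and $M_G(f)=|L(f)|$. Throughout I will use that $\|\RT_N(f)\|_1$ is the sum over observer fibers of the absolute values of the fiber sums.

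For (i), the function $\MN_f(B)$ is a maximum over a family of observers that grows with $B$, so it is nondecreasing. The bounds come from the triangle inequality over the fibers, which gives $0\le M_N(f)\le\|\RT(f)\|_1$. For (ii), I use that $M_N(f)$ never exceeds $\|\RT(f)\|_1$. Hence $\MN_f(B)=\|\RT(f)\|_1$ holds exactly when some observer of index at most $B$ attains full mass. The least such $B$ is then the least index of a full-mass observer, which is $\cfc(f)$. Both sides are $\infty$ when no such observer exists. For (iv), I apply (ii) with $B=1$, where the only observer is $G$ and $M_G(f)=|L(f)|$. If all coefficients have one sign, then $|L(f)|=\sum|I_f(\alpha)|$, which gives $\cfc(f)=1$. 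For (v), the trivial observer has $\RT_G(f)=L(f)[\ast]\neq0$, so $\dc(f)=1$. For (vi), I apply the second inequality of Proposition~\ref{prop:signature}: $V_{N'}(f)\ge V_N(f)\ge1$.

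For (iii), the first step is $\dc\le\cfc$. Take any observer $N$ with $M_N(f)=\|\RT(f)\|_1$. Since $N(f)>0$, the norm $\|\RT(f)\|_1$ is positive, so $\RT_N(f)\ne0$ and $N$ qualifies for $\dc$. Taking minima gives the inequality, and the case $\cfc=\infty$ is trivial. The second step is $\cfc\le\oc$. If $(q_N)_R$ is injective on $E_f$, then each fiber contains at most one essential class, so no cancellation occurs and $M_N(f)=\|\RT(f)\|_1$. Taking minima again gives the inequality, with the infinite case trivial.

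I do not expect any real obstacle here. The only care needed is bookkeeping with $\infty$ in the minima, and identifying the trivial observer's trace with $L(f)$, which follows from $\RN(\bar\phi_G)$ being a singleton.
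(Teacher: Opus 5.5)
Your proposal is correct and follows essentially the same argument as the paper: the trivial observer gives $\RT_G(f)=L(f)[*]$ for (iv)--(v), the triangle inequality over observer fibers gives (i)--(ii), and full-mass and resolving observers give the chain in (iii). The only differences are cosmetic. You get monotonicity in (i) from the growing family of admissible observers, and you deduce (vi) from the visibility inequality of Proposition~\ref{prop:signature} rather than directly from the pushforward, which amounts to the same thing.
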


\begin{proof}
Monotonicity and the upper bound in (i) follow from
Proposition~\ref{prop:signature} and the triangle inequality; this also gives
(ii).  An observer preserving the full absolute mass has nonzero trace when
$N(f)>0$, while a resolving observer preserves every nonzero coefficient,
proving (iii).  For the trivial observer,
\[
  \RT_G(f)=L(f)[*],\qquad M_G(f)=|L(f)|,
\]
which proves (iv) and (v).  Finally, the coarse trace is the pushforward of
the fine trace, so a zero fine trace would have zero pushforward; this proves
(vi).
\end{proof}

\begin{proposition}[Basic properties]
\label{prop:basic}
The following hold.
\begin{enumerate}[label=(\roman*)]
\item $\VN_f(B)$ is nondecreasing and $0\le\VN_f(B)\le N(f)$.
\item If $N'\subseteq N$, then $V_N(f)\le V_{N'}(f)$.
\item For every observer $N$,
      \[
        V_N(f)=N(f)
        \quad\Longleftrightarrow\quad
        (q_N)_R|_{E_f}\text{ is injective}.
      \]
\item
      \[
        \oc(f)=\min\{B:\VN_f(B)=N(f)\}.
      \]
\item If $\oc(f)<\infty$, then
      \[
        N(f)\le\oc(f).
      \]
\end{enumerate}
\end{proposition}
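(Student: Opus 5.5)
The plan is to obtain all five items from Proposition~\ref{prop:signature} together with the definition of the visible support. I will first establish (ii) and (iii), and then derive (i), (iv) and (v) from them.

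For (ii), Proposition~\ref{prop:signature} already gives $V_N(f)\le V_{N'}(f)$ whenever $N'\subseteq N$, so nothing further is needed. For (iii), I will first bound $V_N(f)$ in two steps. The support of $\RT_N(f)$ is contained in $(q_N)_R(E_f)$, since only classes carrying a nonzero coefficient contribute. Hence
\[
  V_N(f)\le R_N(f)\le \#E_f=N(f).
\]
If $(q_N)_R|_{E_f}$ is injective, each quotient class in the image has exactly one essential class above it. Its coefficient is therefore a single nonzero index $I_f(\alpha)$, so no cancellation occurs and $V_N(f)=N(f)$. Conversely, if $V_N(f)=N(f)$, then both inequalities above are equalities. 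In particular $R_N(f)=\#E_f$, which says precisely that $(q_N)_R|_{E_f}$ is injective.

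For (i), the maximum defining $\VN_f(B)$ is taken over a set of observers that grows with $B$, so $\VN_f(B)$ is nondecreasing. The trivial observer lies in this set for every $B$, so the maximum is attained over a nonempty set. Each $V_N(f)$ lies in $[0,N(f)]$ by the chain above, so the same bounds hold for $\VN_f(B)$.

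For (iv), I will treat $N(f)\le 1$ separately. In that case the trivial observer is resolving and $\VN_f(1)=N(f)$, so both sides of (iv) equal $1$. Now assume $N(f)\ge2$. By (iii), the condition $\VN_f(B)=N(f)$ holds exactly when some observer of index at most $B$ is resolving. Hence the least such $B$ is the least index of a resolving observer, which is $\oc(f)$. If no resolving observer exists, the set of such $B$ is empty and both sides are $\infty$.

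For (v), let $N$ be a resolving observer with $[G:N]=\oc(f)$. The injective map $(q_N)_R|_{E_f}$ sends $E_f$ into $\RN(\bar\phi_N)$. The twisted classes of $\bar\phi_N$ partition the finite group $Q_N$, so $\#\RN(\bar\phi_N)\le |Q_N|$. Combining these gives
\[
  N(f)\le \#\RN(\bar\phi_N)\le |Q_N|=\oc(f).
\]
The only point requiring care is the bookkeeping for $N(f)\le1$ and for $\oc(f)=\infty$ in (iv); no step presents a genuine obstacle.
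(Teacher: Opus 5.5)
Your proof is correct and follows essentially the same route as the paper. It takes (ii) from refinement (Proposition~\ref{prop:signature}). It gets (iii) from the observation that a collision forces $V_N(f)\le R_N(f)<N(f)$, while injectivity leaves each nonzero index alone in its fibre. It then reads (iv) off (iii) and obtains (v) from $N(f)\le\#\RN(\bar\phi_N)\le|Q_N|$. Your separate bookkeeping for $N(f)\le 1$ and for the case with no resolving observer in (iv) is harmless extra detail that the paper leaves implicit.
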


\begin{proof}
Refinement factors the coarse pushforward through the fine one, so a coarse
observer can only merge atoms or cancel coefficients, proving (i) and (ii).
If two elements of the $N(f)$-element support collide, the image has at most
$N(f)-1$ possible support points; cancellation can only reduce this number.
This proves (iii), from which (iv) follows.  If an observer separates $N(f)$
classes, then the finite set $\RN(\bar\phi_N)$ has at least $N(f)$ elements,
and therefore $|G/N|\ge N(f)$, proving (v).
\end{proof}

\begin{theorem}[Finite resolution criterion]
\label{thm:resolutioncriterion}
The following are equivalent:
\begin{enumerate}[label=(\alph*)]
\item $\oc(f)<\infty$;
\item every two distinct elements of $E_f$ are separated by some finite
      observer;
\item $j_\phi|_{E_f}$ is injective;
\item there exists a finite observer $N$ with $V_N(f)=N(f)$.
\end{enumerate}
\end{theorem}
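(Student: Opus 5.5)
I would prove the equivalences in the cycle (a)$\Rightarrow$(d)$\Rightarrow$(b)$\Rightarrow$(c)$\Rightarrow$(a). Almost all the work has already been done in Proposition~\ref{prop:basic} and Theorem~\ref{thm:profinite}. The one genuinely new ingredient is the passage from pairwise separation to simultaneous separation, and it is cheap because $E_f$ is finite.

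For (a)$\Rightarrow$(d): if $\oc(f)<\infty$, the minimum in the definition of $\oc(f)$ is attained by some finite observer $N$ with $(q_N)_R|_{E_f}$ injective. Proposition~\ref{prop:basic}(iii) then gives $V_N(f)=N(f)$. (If $N(f)\le1$, the trivial observer already works.)

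For (d)$\Rightarrow$(b): again by Proposition~\ref{prop:basic}(iii), $V_N(f)=N(f)$ means $(q_N)_R$ is injective on $E_f$. So this single observer separates every pair of distinct elements of $E_f$.

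For (b)$\Leftrightarrow$(c): I would apply Theorem~\ref{thm:profinite} pairwise. Two classes $[a],[b]\in E_f$ have the same image under $j_\phi$ exactly when no finite observer separates them. Hence injectivity of $j_\phi$ on $E_f$ is equivalent to every distinct pair being separated by some finite observer. This step uses the standing assumption of the section that $G$ is finitely generated; this holds since $X$ is a finite CW complex.

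For (b)$\Rightarrow$(a), which is the only step needing a construction, I would argue as follows.
\begin{enumerate}[label=(\roman*)]
\item Since $E_f$ is finite, there are finitely many pairs $\{\alpha_i,\beta_i\}$, $i=1,\dots,s$, of distinct elements of $E_f$.
\item For each pair, choose a separating finite observer $N_i$ and set $N=\bigcap_i N_i$.
\item $N$ is a finite observer: it is normal, it has finite index as a finite intersection of finite-index subgroups, and $\phi(N)\subseteq\bigcap_i\phi(N_i)\subseteq N$.
\item Since $N\subseteq N_i$, the map $(q_{N_i})_R$ factors through $(q_N)_R$, as in the proof of Proposition~\ref{prop:signature}. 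So if $(q_N)_R$ identified $\alpha_i$ and $\beta_i$, then so would $(q_{N_i})_R$, a contradiction.
\item Hence $(q_N)_R|_{E_f}$ is injective, and $\oc(f)\le[G:N]<\infty$.
\end{enumerate}

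I expect no serious obstacle. The only point needing care is checking that the factorization of Reidemeister-class maps through the finer quotient is well defined. This holds because $G/N\to G/N_i$ intertwines $\bar\phi_N$ and $\bar\phi_{N_i}$.
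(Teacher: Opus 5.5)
Your proposal is correct and follows essentially the paper's proof: (a)$\Leftrightarrow$(d) via Proposition~\ref{prop:basic}(iii), the link with (c) via pairwise application of Theorem~\ref{thm:profinite}, and a single resolving observer obtained by intersecting the finitely many pair-separating observers. The only difference is cosmetic: you run the intersection step from (b) rather than from (c), so Theorem~\ref{thm:profinite} is invoked only once, for (b)$\Leftrightarrow$(c).
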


\begin{proof}
The equivalence of (a) and (d) follows from Proposition~\ref{prop:basic}.
Clearly (a) implies (b), while (b) and Theorem~\ref{thm:profinite} imply (c).
If (c) holds, Theorem~\ref{thm:profinite} gives, for each pair of distinct
classes in the finite set $E_f$, a finite observer separating that pair.  The
intersection of the corresponding kernels is a finite observer separating
all pairs simultaneously.  Thus (c) implies (a).
\end{proof}

\begin{proposition}[Products]
\label{prop:product}
Let $f\colon X\to X$ and $g\colon Y\to Y$ be self-maps of finite connected
CW complexes.  For observers $N$ and $M$,
\[
  \RT_{N\times M}(f\times g)=\RT_N(f)\boxtimes\RT_M(g),
\]
where $\boxtimes$ is the external product on Reidemeister classes.  Hence
\[
  R_{N\times M}(f\times g)=R_N(f)R_M(g),\qquad
  V_{N\times M}(f\times g)=V_N(f)V_M(g),
\]
\[
  M_{N\times M}(f\times g)=M_N(f)M_M(g).
\]
If $\oc(f),\oc(g)<\infty$, then
\[
  \oc(f\times g)\le\oc(f)\oc(g).
\]
\end{proposition}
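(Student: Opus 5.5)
The plan is to reduce everything to the standard product formula for Reidemeister traces and then push forward along quotient maps.

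First, the product space $X\times Y$ has $\pi_1=G\times H$, and $f\times g$ induces $\phi\times\psi$. Twisted conjugacy in a direct product is componentwise: $(a,b)\sim(a',b')$ if and only if $a\sim_\phi a'$ and $b\sim_\psi b'$. Hence $\RN(\phi\times\psi)\cong\RN(\phi)\times\RN(\psi)$. For observers $N$ and $M$, the subgroup $N\times M$ is normal of finite index $[G:N][H:M]$ and is $(\phi\times\psi)$-invariant. Its quotient is $Q_N\times Q_M$ with endomorphism $\bar\phi_N\times\bar\psi_M$, so again $\RN$ of the quotient splits as a product. Moreover, $(q_{N\times M})_R=(q_N)_R\times(q_M)_R$ under these identifications.

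Second, I will invoke the classical product formula $\RT(f\times g)=\RT(f)\boxtimes\RT(g)$, i.e.\ $I_{f\times g}(\alpha,\beta)=I_f(\alpha)I_g(\beta)$. This holds because fixed-point classes of a product are products of classes, and the index is multiplicative (see \cite{Jiang,Staecker}). Pushforward along a product map of sets commutes with external product, since $(u\times v)_\#(x\boxtimes y)=u_\#x\boxtimes v_\#y$. This yields the first identity.

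Third, for the numerical consequences:
\begin{itemize}[label=$\cdot$]
\item The coefficients of $x\boxtimes y$ are products $x_\beta y_\gamma$, which are nonzero exactly when both factors are nonzero. So $\supp(x\boxtimes y)=\supp x\times\supp y$, giving $V_{N\times M}=V_NV_M$.
\item The same support identity gives $\supp\RT(f\times g)=E_f\times E_g$. Its image under the product map is $(q_N)_R(E_f)\times(q_M)_R(E_g)$, giving $R_{N\times M}=R_NR_M$.
\item The $\ell^1$ norm is multiplicative: $\sum|x_\beta y_\gamma|=\|x\|_1\|y\|_1$, giving $M_{N\times M}=M_NM_M$.
\end{itemize}

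Finally, for the bound on $\oc$, take observers $N$ and $M$ realizing $\oc(f)$ and $\oc(g)$, so that $(q_N)_R$ is injective on $E_f$ and $(q_M)_R$ is injective on $E_g$. The product map is then injective on $E_f\times E_g=E_{f\times g}$. The observer $N\times M$ has index $\oc(f)\oc(g)$, so $\oc(f\times g)\le\oc(f)\oc(g)$.

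The only genuinely nontrivial input is the multiplicativity of the Reidemeister trace for products. I will cite it rather than reprove it, checking only that the lift choices $\widetilde f\times\widetilde g$ match the identification of Reidemeister classes; Proposition~\ref{prop:homotopyinvariance} makes the choices irrelevant anyway. Note that a general observer of $G\times H$ need not be of product form, so only the inequality for $\oc$ is claimed.
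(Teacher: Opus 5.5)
Your proposal is correct and takes the same route as the paper. The paper's proof just cites the componentwise splitting of twisted-conjugacy classes and the multiplicativity of the fixed-point index under Cartesian products, then uses the product observer $N\times M$; you fill in the routine checks it leaves implicit, namely that pushforward commutes with $\boxtimes$ and that supports and $\ell^1$ norms are multiplicative.
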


\begin{proof}
Twisted-conjugacy classes and fixed-point indices are multiplicative under
Cartesian products; see \cite{Jiang}.  Thus resolving observers in the two
factors give a resolving product observer of the product index.
\end{proof}

\section{Cellular realization and permanent finite-observer blindness}

We first extend the graph--sphere construction to an arbitrary finite
presentation complex.  The following result is an adaptation of the standard
cellular Reidemeister-trace realization technique: one adjoins a $2$-sphere
and prescribes the free $\Z G$-summand in degree two.  We state the adaptation
because it converts elements of the profinite blind subgroup from Section~4
into genuine Nielsen fixed-point traces.  The new conclusion used below is
the resulting permanent finite-observer blindness, not the cellular
realization device itself; compare the classical cellular trace framework in
\cite{Jiang,Staecker}.

\begin{theorem}[Realization over a finite complex]
\label{thm:generalrealization}
Let $Y$ be a finite connected CW complex with basepoint, let
$G=\pi_1(Y)$, and put $X=Y\vee S^2$.  Let
\[
  \rho\colon\Z G\longrightarrow\Z[\RN(\id_G)]
\]
collect coefficients by ordinary conjugacy classes.  For every $b\in\Z G$
there is a based cellular map $f_b\colon X\to X$ which restricts to the
identity on $Y$ and satisfies
\[
  \RT(f_b)=\rho\bigl(\chi(Y)e+b\bigr).
\]
Consequently, every finite sum
$w\in\Z[\RN(\id_G)]$ is the Reidemeister trace of such a map.
\end{theorem}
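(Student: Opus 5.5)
We build $f_b$ cell by cell and compute its Reidemeister trace with the cellular (Husseini) formula. Endow $X=Y\vee S^2$ with the CW structure of $Y$ together with one extra $0$-cell-free $2$-cell $e^2$ attached at the basepoint. The universal cover $\widetilde X$ is $\widetilde Y$ with a copy $g\widetilde e^{\,2}$ of the sphere wedged at each lift $g\widetilde x_0$ of the basepoint. The cellular chain complex of $\widetilde X$ is that of $\widetilde Y$, with a free $\Z G$-summand $\Z G\cdot\widetilde e^{\,2}$ added in degree two whose boundary is zero. On $Y$ we take $f_b|_Y=\id_Y$, with base path constant, so $\phi=\id_G$, and we use the identity lift on $\widetilde Y$.

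On the sphere we prescribe the class of $f_b|_{S^2}$ in $\pi_2(X)$. Since $\pi_2(X)\supseteq H_2$ of the sphere summand, namely $\Z G\cdot[\widetilde e^{\,2}]$, the elements of $\pi_2(X,x_0)$ of the form $\sum_g n_g\, g\cdot\iota$ are realized by maps $S^2\to X$. Here $\iota$ is the inclusion of $S^2$ and the action is the $\pi_1$-action; concretely, $g\cdot\iota$ is obtained by dragging the sphere around a loop representing $g$. We choose a based cellular map $S^2\to X$ whose lift sends $\widetilde e^{\,2}$ to $\widetilde e^{\,2}\cdot b'$ plus possibly cells of $\widetilde Y$, where $b'\in\Z G$ is to be chosen. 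The map is cellular after cellular approximation rel basepoint, and its image in $Y$ may be taken to lie in the $1$-skeleton along the dragging paths, so that it contributes nothing to the degree-two diagonal coming from $Y$'s own $2$-cells. Define $f_b=\id_Y\vee(\text{this map})$. The lifted chain map $\widetilde f_\#$ is then the identity on $C_*(\widetilde Y)$, it sends the extra generator to $b'\widetilde e^{\,2}+(\text{terms in }C_2(\widetilde Y))$, and it is block upper-triangular.

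We then apply the Reidemeister trace formula
\[
\RT(f)=\sum_k(-1)^k\rho\bigl(\operatorname{tr}\widetilde f_{\#,k}\bigr).
\]
The identity block contributes $\sum_k(-1)^k c_k(Y)\,e=\chi(Y)e$. The sphere block contributes $(+1)\,b'$ in degree two; the off-diagonal terms into $C_2(\widetilde Y)$ do not affect the trace. Choosing $b'=b$, with the sign convention for right versus left action handled by replacing $b$ with its involute if needed, gives $\RT(f_b)=\rho(\chi(Y)e+b)$. For the final claim, given $w$, choose $b\in\Z G$ with $\rho(b)=w-\chi(Y)[e]$; this is possible because $\rho$ is surjective, as it sends each $g$ to its conjugacy class.

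The main obstacle is the realization step: checking that one can build a cellular sphere map whose lifted degree-two matrix entry on the new cell is exactly $b$. A second point is that the cover's orientation conventions, and the left versus right $\Z G$-module conventions in the trace formula, make the coefficient $b$ and not $\bar b$. We handle this by writing the map as a pinch $S^2\to\bigvee S^2$ followed by the maps $g\cdot\iota$ with degrees $n_g$. We then verify directly on one lift that $g\cdot\iota$ lifts to a map of degree one onto the sphere $g\widetilde e^{\,2}$, which gives the matrix entry $g$, and adjust the conventions to match the formula.
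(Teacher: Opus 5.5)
Your proposal is correct and follows essentially the same route as the paper. You take the identity on $Y$ wedged with a sphere map realizing $b$ in the free summand $\Z G\cdot[\widetilde e^{\,2}]$ of $\pi_2(X)\cong H_2(\widetilde X)$, then apply the cellular Reidemeister trace formula, where the $Y$-block gives $\chi(Y)e$ and the sphere block gives $b$. Your pinch-and-drag construction and your observation that the left/right convention can be absorbed by replacing $b$ with its involute (since $b$ is arbitrary) spell out the realization step that the paper asserts in one line; block-triangularity, rather than the paper's block-diagonality, is all the trace computation needs.
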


\begin{proof}
Choose one lift of every cell of $Y$.  The cellular chain complex of the
universal cover splits as
\[
  C_*(\widetilde X)
  \cong C_*(\widetilde Y)\oplus \Z G[2],
\]
where the second summand is generated by a chosen lift of the added
$2$-sphere and has zero boundary.  Since $\widetilde X$ is simply connected,
the Hurewicz homomorphism
\[
  \pi_2(\widetilde X)\longrightarrow H_2(\widetilde X;\Z)
\]
is an isomorphism.  Hence the direct summand generated by the lifted spheres
determines a free $\Z G$-summand of $\pi_2(X)$.  The sphere component of a
based cellular self-map can therefore be chosen so that its lifted degree-two
chain map on this summand is multiplication by $b$.  Take the identity on $Y$
and this map on the sphere.

The lifted chain map is block diagonal.  On the $Y$-summand it is the
identity, whose alternating cellular trace is
\[
  \sum_j(-1)^j(\#\{j\text{-cells of }Y\})e=\chi(Y)e.
\]
The added degree-two summand contributes $b$.  The cellular Reidemeister trace
formula therefore gives
\[
  \RT(f_b)=\rho\bigl(\chi(Y)e+b\bigr).
\]
For the final assertion, choose a finite group-ring lift
$\widetilde w\in\Z G$ of $w$ and take
$b=\widetilde w-\chi(Y)e$.
\end{proof}

The delayed-visibility examples use the following special case.  For
$r\ge1$, let
\[
  X_r=\bigvee_{i=1}^r S^1_i\vee S^2,
  \qquad
  G=\pi_1(X_r)=F_r,
\]
and retain the notation $\rho\colon\Z G\to\Z[\RN(\id_G)]$.

\begin{corollary}[Graph--sphere Reidemeister-trace realization]
\label{cor:realization}
For every $w\in\Z G$, there is a based cellular map
$f_w\colon X_r\to X_r$ which is the identity on the one-skeleton and satisfies
\[
  \RT(f_w)=\rho(w).
\]
Moreover, on $H_2(X_r;\Z)\cong\Z$, the induced map is multiplication by
\[
  \varepsilon(w)+r-1,
\]
where $\varepsilon\colon\Z G\to\Z$ is the augmentation.
\end{corollary}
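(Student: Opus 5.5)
We derive the corollary directly from Theorem~\ref{thm:generalrealization} with $Y=\bigvee_{i=1}^r S^1_i$. This $Y$ has one $0$-cell and $r$ one-cells, so $\chi(Y)=1-r$ and $X=Y\vee S^2=X_r$. Given $w\in\Z G$, put
\[
  b:=w-\chi(Y)e=w+(r-1)e\in\Z G .
\]
The theorem then gives a based cellular map $f_w:=f_b$ which is the identity on $Y$, i.e.\ on the one-skeleton of $X_r$. It satisfies
\[
  \RT(f_w)=\rho\bigl(\chi(Y)e+b\bigr)=\rho(w).
\]
This settles the first assertion. We use the theorem with exactly this $b$, not the final clause with an arbitrary lift, because the homology computation below depends on $b$.

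For the homology statement, we compute $H_2$ through the cellular chain complex of the universal cover. Here $H_2(X_r;\Z)\cong\Z$ is generated by the $2$-sphere, since $Y$ has no $2$-cells. Along the covering $\widetilde X_r\to X_r$, cellular chains satisfy
\[
  C_*(X_r)=\Z\otimes_{\Z G}C_*(\widetilde X_r).
\]
The induced cellular map on $C_2(X_r)$ is therefore the augmentation of the lifted degree-two chain map. By construction in the proof of Theorem~\ref{thm:generalrealization}, that lifted map on the free summand $\Z G[2]$ is multiplication by $b$. After applying $\Z\otimes_{\Z G}-$, multiplication by $b$ becomes multiplication by
\[
  \varepsilon(b)=\varepsilon(w)+(r-1)\varepsilon(e)=\varepsilon(w)+r-1 .
\]
Because $C_2(X_r)\cong\Z$ consists of cycles and $C_3=0$, this integer is the induced map on $H_2(X_r;\Z)$.

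The one point needing care is that the identification of the sphere component with multiplication by $b$ is made for the lifted chain map, and that multiplication is on the correct side, so that augmenting yields $\varepsilon(b)$ without a twist. Since the map is the identity on the one-skeleton, $\phi=\id_G$, the chain map is $\Z G$-linear with no twisting, and the augmentation is multiplicative. Hence $\varepsilon(b)$ is unambiguous whether $b$ acts on the left or on the right.
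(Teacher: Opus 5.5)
Your proposal is correct and matches the paper's proof. The paper applies Theorem~\ref{thm:generalrealization} with $Y=\bigvee_{i=1}^r S^1_i$ and $b=w+(r-1)e$, and reads off the degree on the sphere as $\varepsilon(b)$; your passage through $C_*(X_r)=\Z\otimes_{\Z G}C_*(\widetilde X_r)$ simply spells out that last step.
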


\begin{proof}
Apply Theorem~\ref{thm:generalrealization} with
$Y=\bigvee_{i=1}^rS_i^1$, for which $\chi(Y)=1-r$, and choose the sphere
coefficient
\[
  b=w-(1-r)e=w+(r-1)e.
\]
The ordinary degree on the added sphere is
$\varepsilon(b)=\varepsilon(w)+r-1$.
\end{proof}

\subsection{Geometric reading of the realization corollary}

Corollary~\ref{cor:realization} is a fixed-point realization statement, not only
a group-ring construction.  The coefficient of a Reidemeister class
$[g]\in\RN(\id_G)$ is the total local fixed-point index of the corresponding
Nielsen class, equivalently of the lift labelled by $g$ up to deck
conjugacy.  Passing to a compatible finite regular cover replaces the deck
label $g$ by its image in the finite deck group.  Several Nielsen classes may
therefore enter the same observed atom, and their local indices are added.

For example, take
\[
  w=e-a^{M}+b\in\Z F(a,b).
\]
The fine Reidemeister spectrum consists of three essential atoms with
coefficients $+1,-1,+1$.  A finite observer satisfying
$q(a^M)=q(e)$ but $q(b)\ne q(e)$ identifies the first two classes and cancels
their index contributions, while the third remains visible.  Figure~\ref{fig:collision-cancellation} records this fixed-point-class geometry.

\begin{figure}[t]
\centering
\begin{tikzpicture}[
  node distance=12mm and 19mm,
  atom/.style={draw,rounded corners,minimum width=24mm,minimum height=8mm,align=center},
  observed/.style={draw,thick,minimum width=31mm,minimum height=9mm,align=center},
  arr/.style={-{Latex[length=2mm]},thick}
]
\node[atom] (e) {$[e]$\\index $+1$};
\node[atom,right=of e] (a) {$[a^M]$\\index $-1$};
\node[atom,right=of a] (b) {$[b]$\\index $+1$};
\node[observed,below=of a,xshift=-17mm] (zero) {$q_R([e])=q_R([a^M])$\\observed coefficient $0$};
\node[observed,below=of b] (plus) {$q_R([b])$\\observed coefficient $+1$};
\draw[arr] (e) -- (zero);
\draw[arr] (a) -- (zero);
\draw[arr] (b) -- (plus);
\end{tikzpicture}
\caption{Schematic fixed-point-class resolution.  The upper nodes are Nielsen
classes and the lower nodes are observed Reidemeister atoms.  The diagram
represents class identification and index addition, not the physical
locations of individual fixed points.}
\label{fig:collision-cancellation}
\end{figure}

This viewpoint will be used repeatedly below: the group-theoretic quotient is
the computational mechanism, while the quantities being merged, cancelled,
or separated are fixed-point classes and their local indices.

\begin{theorem}[Permanent finite-observer blindness]
\label{thm:permanentblindness}
There exist a finite connected CW complex $X$ and based cellular self-maps
$f_-,f_+\colon X\to X$, both inducing the identity on $\pi_1(X)$, with the
following properties:
\begin{align*}
  \RT(f_-)&=[a]-[b],&
  N(f_-)&=2,& L(f_-)&=0,\\
  \RT(f_+)&=[a]+[b],&
  N(f_+)&=2,& L(f_+)&=2,
\end{align*}
where $[a]\ne[b]$, but their images are conjugate in every finite quotient of
$\pi_1(X)$.  Consequently,
\[
  \VN_{f_-}(B)=\MN_{f_-}(B)=0
  \quad(B\ge1),
\]
\[
  \dc(f_-)=\cfc(f_-)=\oc(f_-)=\infty,
  \qquad
  0\ne[a]-[b]\in\mathcal B_{\id},
\]
whereas
\[
  \VN_{f_+}(B)=1,\qquad \MN_{f_+}(B)=2
  \quad(B\ge1),
\]
and
\[
  \dc(f_+)=\cfc(f_+)=1,
  \qquad
  \oc(f_+)=\infty.
\]
The group $\pi_1(X)$ may be chosen residually finite.
\end{theorem}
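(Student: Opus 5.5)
The plan is to combine Theorem~\ref{thm:generalrealization} with a finitely presented, residually finite group $G$ that contains two non-conjugate elements $a,b$ whose images are conjugate in every finite quotient.  Such groups are classical: they are residually finite groups that are \emph{not conjugacy separable}.  One can take a finitely presented example, for instance among the polycyclic or nilpotent-by-abelian groups; Remeslennikov and Stebe gave such examples, and subgroups of $SL_n(\Z)$ for $n\ge 3$ also work.  The identity on $G$ is an automorphism, so Theorem~\ref{thm:profinite} with $\phi=\id$ says exactly that $j_{\id}([a])=j_{\id}([b])$ in $\RN(\id_{\wideG})$, with $[a]\ne[b]$.

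The construction goes as follows.
\begin{enumerate}[label=(\arabic*)]
\item Let $Y$ be the presentation complex of $G$ and put $X=Y\vee S^2$.
\item Apply Theorem~\ref{thm:generalrealization} with $w_\pm=[a]\pm[b]\in\Z[\RN(\id_G)]$.  This gives based cellular maps $f_\pm$ that restrict to the identity on $Y$, hence induce the identity on $\pi_1(X)=G$, and satisfy $\RT(f_\pm)=[a]\pm[b]$.
\item Read off $N(f_\pm)=2$, $L(f_-)=0$ and $L(f_+)=2$ from the support and the coefficient sum.
\item For any finite observer $N$ (every finite-index normal subgroup qualifies, since $\phi=\id$), the classes $(q_N)_R([a])$ and $(q_N)_R([b])$ coincide by the choice of $a,b$.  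Hence $\RT_N(f_-)=0$ and $\RT_N(f_+)=2[\bar a]$.
\end{enumerate}

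The numerical consequences then follow directly from the definitions.  For $f_-$ we get $V_N=M_N=0$ for every $N$, so $\VN_{f_-}\equiv\MN_{f_-}\equiv0$ and $\dc(f_-)=\infty$.  Since $\|\RT(f_-)\|_1=2$ is never attained, $\cfc(f_-)=\infty$.  Since $E_{f_-}$ is never resolved, $\oc(f_-)=\infty$; this also follows from Theorem~\ref{thm:resolutioncriterion}(c).  By Corollary~\ref{cor:blind}, $[a]-[b]$ is a nonzero element of $\ker(j_{\id})_\#=\mathcal B_{\id}$.  For $f_+$ we get $V_N=1$ and $M_N=2$ for all $N$, including the trivial one.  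This gives $\VN_{f_+}(B)=1$, $\MN_{f_+}(B)=2$, and $\dc(f_+)=\cfc(f_+)=1$; the latter also follows from Proposition~\ref{prop:threelevels}(iv),(v), since $|L|=\|\RT\|_1=2$.  Finally $\oc(f_+)=\infty$, because $\VN_{f_+}$ never reaches $N(f_+)=2$ (Proposition~\ref{prop:basic}(iv)).

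The main obstacle is the group-theoretic input.  The group must be finitely presented, so that $Y$ is a finite complex; it must be residually finite, as the statement allows; and it must be demonstrably not conjugacy separable, with an explicit pair $a,b$.  I would first try Stebe's finitely presented non-conjugacy-separable examples, or a suitable split extension $\Z^2\rtimes\Z$.  If needed, I would instead use a free-by-cyclic or $SL_3(\Z)$-type example where non-conjugate elements are known to be profinitely conjugate, and cite the literature rather than reprove it.  If residual finiteness were not required, one could simply take a finitely presented group with two non-conjugate elements whose profinite images coincide, such as an infinite simple group, making the non-separability trivial.
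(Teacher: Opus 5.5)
The topological half of your argument is exactly the paper's. You take the presentation complex $Y$, set $X=Y\vee S^2$, apply Theorem~\ref{thm:generalrealization} to $[a]\pm[b]$, and observe that $\RT_N(f_-)=0$ and $\RT_N(f_+)=2[q(a)]$ for every observer. All the listed values of $\VN$, $\MN$, $\dc$, $\cfc$, $\oc$ and the nonzero blind element then follow as you say.

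The real difference is the group-theoretic input, which you rightly identify as the crux. The paper never names a group. It combines the Cotton-Barratt--Wilton finitely presented residually finite group with unsolvable conjugacy problem with the Mostowski-type fact that a finitely presented conjugacy-separable group has solvable conjugacy problem. Hence that group must contain a nonconjugate pair $a,b$ that fuses in every finite quotient. This is purely existential, but it needs no arithmetic.

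Several of your candidates would fail:
\begin{itemize}
\item Polycyclic-by-finite groups are conjugacy separable; this is precisely Remeslennikov's 1969 theorem on polycyclic groups, reproved by Formanek. Finitely generated nilpotent groups are conjugacy separable by Blackburn. In particular, no split extension $\Z^2\rtimes\Z$ can supply the pair $a,b$, so your ``first try'' does not work.
\item The free-by-cyclic suggestion is unsupported. Many such groups, for instance the hyperbolic ones and those that are 3-manifold groups, are known to be conjugacy separable.
\end{itemize}

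Your other candidate is sound. $SL_n(\Z)$ for $n\ge3$ is finitely presented, linear and hence residually finite, and not conjugacy separable (Stebe, 1972). By the congruence subgroup property its finite quotients factor through the groups $SL_n(\Z/m)$. The non-separability comes from elements of $SL_n(\Z)$ that are conjugate in $SL_n(\widehat{\Z})$ but not in $SL_n(\Z)$. Using this group gives a more explicit version of the theorem than the paper's, with a potentially concrete pair $a,b$, at the cost of arithmetic input.

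Your infinite-simple-group remark (take $a=e$ and $b\ne e$ in a finitely presented infinite simple group) correctly yields the main statement. It does not, however, give the residually finite refinement in the theorem's last sentence; for that you need the $SL_n(\Z)$ route or the paper's argument.
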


\begin{proof}
A finitely presented conjugacy-separable group has solvable conjugacy
problem; see the discussion in \cite{BogopolskiGrunewald}.  Cotton-Barratt
and Wilton construct a finitely presented residually finite group with
unsolvable conjugacy problem \cite{CottonBarrattWilton}.  Thus there exists
a finitely presented residually finite group $G$ which is not conjugacy
separable, and hence there are nonconjugate elements $a,b\in G$ whose images
are conjugate in every finite quotient of $G$.  Let $Y$ be a finite presentation complex for $G$ and put
$X=Y\vee S^2$.  Apply Theorem~\ref{thm:generalrealization} to the two trace
candidates
\[
  w_-=[a]-[b],
  \qquad
  w_+=[a]+[b].
\]
Their two support classes are distinct in $G$, so both maps have Nielsen
number two; their coefficient sums give the displayed Lefschetz numbers.

Since the induced endomorphism is the identity, every finite-index normal
subgroup is compatible.  In every finite quotient the two support classes
merge.  Thus
\[
  \RT_N(f_-)=[q(a)]-[q(b)]=0
\]
for every observer $N$, proving the claims for $f_-$.  For $f_+$ one has
\[
  \RT_N(f_+)=2[q(a)],
\]
so one atom remains visible and the full absolute index mass is preserved at
every scale, while no finite observer separates the two original classes.
The stated complexities and the nonzero blind element now follow directly
from the definitions and Corollary~\ref{cor:blind}.
\end{proof}

\begin{remark}
Residual finiteness is included to isolate the obstruction: the permanent
blindness above is not caused by a lack of finite quotients or by failure to
separate group elements.  It is specifically the failure of finite quotients
to separate the two conjugacy classes.
\end{remark}

\section{Arbitrarily delayed fixed-point visibility}

We now work on the fixed nonabelian polyhedron
\[
  X=X_2=S^1_a\vee S^1_b\vee S^2,
  \qquad
  \pi_1(X)=F(a,b).
\]
All maps constructed below induce the identity automorphism on $F(a,b)$.

\begin{remark}[Why the thresholds in this section remain finite]
\label{rem:freefinite}
Free groups are conjugacy separable; see, for example,
\cite{BogopolskiGrunewald}.  Therefore, for every map on $X_r$ inducing the
identity on $F_r$, any two distinct classes in the finite essential support
are separated by some finite quotient.  Intersecting the finitely many
pair-separating kernels gives one observer resolving the entire support.
Thus $\oc(f)<\infty$ throughout this graph--sphere setting.  The examples
below make this finite threshold arbitrarily large, while
Theorem~\ref{thm:permanentblindness} explains why a different fundamental
group is needed for $\oc=\infty$.
\end{remark}

For a prime $p$, write
\[
  M_p=\operatorname{lcm}(1,2,\dots,p-1).
\]

\begin{theorem}[Complete invisibility below a prescribed scale]
\label{thm:delayedzero}
For every prime $p$, there is a based cellular map $f_p\colon X\to X$ such
that
\[
  (f_p)_*|_{\pi_1}=\id,
  \qquad
  (f_p)_*|_{H_1}=\id,
  \qquad
  (f_p)_*|_{H_2}=\id,
\]
\[
  L(f_p)=0,
  \qquad
  N(f_p)=2,
\]
and
\[
  \VN_{f_p}(B)=
  \begin{cases}
  0,&B<p,\\[1mm]
  2,&B\ge p,
  \end{cases}
  \qquad
  \MN_{f_p}(B)=
  \begin{cases}
  0,&B<p,\\[1mm]
  2,&B\ge p.
  \end{cases}
\]
In particular,
\[
  \dc(f_p)=\cfc(f_p)=\oc(f_p)=p.
\]
\end{theorem}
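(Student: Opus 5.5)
We will realize the statement by a direct application of Corollary~\ref{cor:realization} with $r=2$ and the group-ring element
\[
  w=e-a^{M_p}\in\Z F(a,b).
\]
The idea is to choose two essential classes whose labels become equal in every finite quotient of order less than $p$, yet are separated by a cyclic quotient of order $p$. The coefficients $+1,-1$ then force total cancellation below $p$ and full visibility at $p$.

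\textbf{Step 1: the map and its invariants.} Corollary~\ref{cor:realization} gives a based cellular map $f_p$ which is the identity on the one-skeleton and satisfies $\RT(f_p)=[e]-[a^{M_p}]$.
\begin{itemize}
\item It induces the identity on $\pi_1$, and hence on $H_1$.
\item On $H_2$ it is multiplication by $\varepsilon(w)+r-1=0+1=1$, so it is the identity there too.
\item The classes $[e]$ and $[a^{M_p}]$ are distinct conjugacy classes of $F(a,b)$, since $a^{M_p}\neq e$. Hence $N(f_p)=2$.
\item $L(f_p)=1-1=0$.
\end{itemize}
Since $\phi=\id$, every finite-index normal subgroup is an observer, and $[G:N]=|Q_N|$.

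\textbf{Step 2: blindness below $p$.} Let $Q=G/N$ with $|Q|<p$. The image $q(a)$ has order $k\le|Q|\le p-1$, so $k\mid M_p$ and therefore $q(a^{M_p})=e$. The two support classes thus coincide in $\RN(\id_Q)$, and
\[
  \RT_N(f_p)=[e]-[e]=0.
\]
It follows that $V_N=M_N=0$ for every observer of index less than $p$, which gives $\VN_{f_p}(B)=\MN_{f_p}(B)=0$ for $B<p$.

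\textbf{Step 3: resolution at $p$.} Take the homomorphism $F(a,b)\to\Z/p$ with $a\mapsto1$ and $b\mapsto0$, and let $N$ be its kernel, an observer of index $p$. Since $\Z/p$ is abelian, its twisted (here ordinary) conjugacy classes are single elements. The images of the two labels are $0$ and $M_p \bmod p$. Because $p$ is prime and $M_p$ is a product of integers smaller than $p$, we have $p\nmid M_p$, so the images are distinct. Hence $\RT_N(f_p)$ has two atoms with coefficients $\pm1$, giving $V_N=2$ and $M_N=2$.

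\textbf{Step 4: the profile and the thresholds.} By the bounds $\VN_f\le N(f)=2$ and $\MN_f\le\|\RT(f_p)\|_1=2$, together with monotonicity in $B$ (Propositions~\ref{prop:basic} and~\ref{prop:threelevels}), the profiles equal $2$ for all $B\ge p$. The definitions then give $\dc(f_p)=\cfc(f_p)=\oc(f_p)=p$.

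The only genuinely delicate point is the choice of exponent in Step~2. The exponent must be killed by every element of every group of order less than $p$, while staying nonzero modulo $p$. Using $\operatorname{lcm}(1,\dots,p-1)$ achieves both at once, and the rest is bookkeeping.
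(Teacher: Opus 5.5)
Your proposal is correct and follows the paper's proof step for step. It uses the same realization element $w_p=e-a^{M_p}$ via Corollary~\ref{cor:realization}, the same observation that every element of a quotient of order below $p$ has order dividing $M_p$ (so the two atoms merge and cancel), and the same cyclic observer sending $a$ to a generator of $C_p$ and $b$ to the identity, which separates the atoms because $p\nmid M_p$; the profiles and the equalities $\dc=\cfc=\oc=p$ then follow from the monotonicity and upper bounds exactly as in the paper.
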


\begin{proof}
Apply Corollary~\ref{cor:realization} to
\[
  w_p=e-a^{M_p}\in\Z F(a,b).
\]
Then
\[
  \RT(f_p)=[e]-[a^{M_p}].
\]
The two conjugacy classes are distinct in the free group, so $N(f_p)=2$.
Their coefficients sum to zero, hence $L(f_p)=0$.  Since $r=2$ and
$\varepsilon(w_p)=0$, Corollary~\ref{cor:realization} gives degree one on $H_2$;
the assertions on $\pi_1$ and $H_1$ follow from the identity one-skeleton.

Let $q\colon F(a,b)\twoheadrightarrow Q$ be any finite observer with
$|Q|<p$.  The order of $q(a)$ divides $|Q|$, and every positive integer below
$p$ divides $M_p$.  Therefore
\[
  q(a)^{M_p}=e_Q.
\]
The two trace atoms merge and cancel:
\[
  q_\#\RT(f_p)=[e_Q]-[e_Q]=0.
\]
Thus every observer of order below $p$ has visible Nielsen number and
surviving absolute index mass equal to zero.

On the other hand, map $a$ to a generator of $C_p$ and $b$ to the identity.
Because $p\nmid M_p$, the image of $a^{M_p}$ is nontrivial.  Hence the two
trace atoms are separated in the quotient $C_p$, so $\VN_{f_p}(p)=2$.
Propositions~\ref{prop:threelevels} and~\ref{prop:basic} now give the claimed profiles and
$\dc(f_p)=\cfc(f_p)=\oc(f_p)=p$.
\end{proof}

The preceding result shows more than unbounded complexity: all finite
observers below the threshold see no nonzero fixed-point trace at all.

\begin{theorem}[Unbounded complexity at every fixed Nielsen number]
\label{thm:fixedm}
Fix an integer $m\ge2$.  For every prime $p\ge m$, there is a based cellular
map $f_{m,p}\colon X\to X$ such that
\[
  (f_{m,p})_*|_{\pi_1}=\id,
  \qquad
  (f_{m,p})_*|_{H_1}=\id,
\]
\[
  (f_{m,p})_*|_{H_2}=(m+1)\id,
  \qquad
  L(f_{m,p})=m,
  \qquad
  N(f_{m,p})=m,
\]
and
\[
  \VN_{f_{m,p}}(B)=
  \begin{cases}
  1,&B<p,\\[1mm]
  m,&B\ge p,
  \end{cases}
  \qquad
  \MN_{f_{m,p}}(B)=m
  \quad(B\ge1).
\]
Consequently,
\[
  \cfc(f_{m,p})=1,
  \qquad
  \oc(f_{m,p})=p.
\]
\end{theorem}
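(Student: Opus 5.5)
The plan is to repeat the proof of Theorem~\ref{thm:delayedzero}, replacing the cancelling pair $[e]-[a^{M_p}]$ by $m$ classes with coefficient $+1$ that collapse onto the identity class in every small quotient. Concretely, I will apply Corollary~\ref{cor:realization} with $r=2$ to
\[
  w_{m,p}=\sum_{j=0}^{m-1}a^{jM_p}\in\Z F(a,b),
\]
and take $f_{m,p}:=f_{w_{m,p}}$. Then $\RT(f_{m,p})=\sum_{j=0}^{m-1}[a^{jM_p}]$. Distinct powers of the primitive element $a$ are nonconjugate in the free group, so the $m$ atoms are distinct. This gives $N(f_{m,p})=m$, $L(f_{m,p})=m$ and $\|\RT(f_{m,p})\|_1=m$. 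Since $\varepsilon(w_{m,p})=m$ and $r=2$, the induced map on $H_2$ is multiplication by $m+r-1=m+1$. The identity one-skeleton gives the statements on $\pi_1$ and $H_1$.

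For the lower range, let $N$ be an observer with $|Q_N|<p$. As in Theorem~\ref{thm:delayedzero}, the order of $q(a)$ is at most $|Q_N|<p$, so it divides $M_p$. Hence $q(a^{jM_p})=e_Q$ for all $j$. The whole support maps to the single class $[e_Q]$ with coefficient $m\neq 0$, so $V_N=1$ and $M_N=m$. The trivial observer already realizes these values, so $\VN_{f_{m,p}}(B)=1$ for $B<p$. Since all coefficients are positive, there is never any cancellation. Therefore $M_N=m$ for every $N$, so $\MN_{f_{m,p}}\equiv m$, and Proposition~\ref{prop:threelevels}(iv) gives $\cfc(f_{m,p})=1$.

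For the upper range, I will use the observer $F(a,b)\to C_p$ sending $a$ to a generator $t$ and $b$ to $e$. Its kernel is normal and invariant under the identity endomorphism. In the abelian group $C_p$, twisted conjugacy for the identity is just equality. So I need the elements $t^{jM_p}$, $0\le j\le m-1$, to be pairwise distinct. This holds because $p\nmid M_p$, so $t^{M_p}$ generates $C_p$, and $0\le j\le m-1\le p-1$ because $m\le p$.

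This is the only place where the hypothesis $p\ge m$ is used. I expect it to be the one point needing care; it is also where $p$ must be prime, since that makes $t^{M_p}$ of order exactly $p$. It follows that $\VN_{f_{m,p}}(p)=m=N(f_{m,p})$. Combined with the lower range and Proposition~\ref{prop:basic}(i),(iv), this gives the displayed profile and $\oc(f_{m,p})=p$.
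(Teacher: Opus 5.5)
Your proposal is correct and follows the paper's proof essentially step for step. It realizes $w_{m,p}=\sum_{j=0}^{m-1}a^{jM_p}$ via Corollary~\ref{cor:realization}. In every quotient of order below $p$ all $m$ atoms collapse to $m[e_Q]$ with no cancellation because the coefficients are positive. The quotient $C_p$ separates the residues $0,M_p,\dots,(m-1)M_p$ because $M_p$ is invertible modulo $p$ and $m\le p$.
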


\begin{proof}
Set
\[
  w_{m,p}=\sum_{j=0}^{m-1}a^{jM_p}\in\Z F(a,b)
\]
and apply Corollary~\ref{cor:realization}.  The powers
$e,a^{M_p},\dots,a^{(m-1)M_p}$ lie in distinct conjugacy classes of the free
group, so
\[
  \RT(f_{m,p})
  =\sum_{j=0}^{m-1}[a^{jM_p}]
\]
has $m$ nonzero terms and total coefficient $m$.  The homological assertions
follow from $\varepsilon(w_{m,p})=m$ and Corollary~\ref{cor:realization}.

If $|Q|<p$, then $q(a)^{M_p}=e_Q$ as in the proof of
Theorem~\ref{thm:delayedzero}; all $m$ terms merge to the single nonzero atom
$m[e_Q]$, so the visible Nielsen number is one.  Since all coefficients are
positive, no observer can reduce the absolute index mass; hence
$M_N(f_{m,p})=m$ for every observer $N$.  In $C_p$, multiplication by
$M_p$ is invertible modulo $p$, and the residues
\[
  0,M_p,2M_p,\dots,(m-1)M_p
\]
are distinct because $m\le p$.  Thus the $p$-element observer resolves all
$m$ classes.
\end{proof}

\begin{remark}[Detection and resolution may differ]
For the maps $f_{m,p}$ of Theorem~\ref{thm:fixedm}, all coefficients have
the same sign.  Hence
\[
  \dc(f_{m,p})=\cfc(f_{m,p})=1,
  \qquad
  \oc(f_{m,p})=p.
\]
Thus the full fixed-point index mass may be visible immediately, while the
individual essential classes remain unresolved until an arbitrarily large
cover degree.  In contrast, Theorem~\ref{thm:delayedzero} has
$\dc=\cfc=\oc=p$, so detection, cancellation removal, and class resolution
may either coincide or separate widely.
\end{remark}

\begin{corollary}[Independence from classical data]
\label{cor:independence}
There is no function $F\colon\N\to\N$ such that
\[
  \oc(f)\le F(N(f))
\]
for all self-maps in the present class.  More strongly, on the single finite
complex $S^1\vee S^1\vee S^2$, observer complexity is unbounded while the
induced endomorphism on $\pi_1$, the induced maps on homology, the Lefschetz
number, and the Nielsen number are all fixed.
\end{corollary}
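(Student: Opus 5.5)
The first claim follows from the second, so I would derive both from Theorem~\ref{thm:fixedm}, which already works on the single complex $X=S^1\vee S^1\vee S^2$.

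For the first claim, suppose some $F\colon\N\to\N$ satisfied $\oc(f)\le F(N(f))$ for every self-map in the class. Fix $m=2$ and choose a prime $p>F(2)$. Theorem~\ref{thm:fixedm} then gives a map $f_{2,p}$ with $N(f_{2,p})=2$ and $\oc(f_{2,p})=p>F(2)$. This contradicts the assumed bound. The same argument works for any fixed $m\ge2$, since primes are unbounded.

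For the stronger claim, I would fix $m\ge2$ and vary the prime $p\ge m$ in the family $f_{m,p}$ of Theorem~\ref{thm:fixedm}. Every member of this family has the following data, none of which depends on $p$:
\begin{enumerate}[label=(\roman*)]
\item the induced endomorphism on $\pi_1$ is $\id_{F(a,b)}$;
\item the induced map on $H_1$ is the identity;
\item the induced map on $H_2$ is multiplication by $m+1$;
\item the induced map on $H_0$ is the identity, because $X$ is connected;
\item $L(f_{m,p})=m$ and $N(f_{m,p})=m$.
\end{enumerate}
The higher homology of $X$ vanishes, so all the listed classical data are constant along the family. Meanwhile $\oc(f_{m,p})=p$ runs over all primes $p\ge m$, so observer complexity is unbounded.

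Alternatively, one can use the family $f_p$ of Theorem~\ref{thm:delayedzero}. There the data are fixed at identity on $\pi_1$, $H_1$, $H_2$, with $L=0$ and $N=2$, while $\oc(f_p)=p$ is unbounded.

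There is no real obstacle here. The only point to check is that "fixed induced endomorphism on $\pi_1$" is meaningful up to the basepoint and path choices. All the maps are based and induce the identity on $F(a,b)$, so they literally share the same $\phi$. By Proposition~\ref{prop:homotopyinvariance}, $\oc$ itself is independent of these choices.
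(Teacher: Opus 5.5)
Your proposal is correct and follows the paper's argument: both claims come from citing the families of Theorems~\ref{thm:delayedzero} and~\ref{thm:fixedm}, whose classical data do not depend on $p$ while $\oc=p$ is unbounded. The only difference is cosmetic: the paper uses $f_p$ from Theorem~\ref{thm:delayedzero} for the stronger statement, whereas you lead with $f_{m,p}$ for fixed $m$. That works equally well, since the $H_2$-degree $m+1$, $L=m$, and $N=m$ are likewise constant in $p$.
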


\begin{proof}
The first claim follows from either Theorem~\ref{thm:delayedzero} or
Theorem~\ref{thm:fixedm}.  For the stronger statement use
Theorem~\ref{thm:delayedzero}: all displayed classical data are independent of
$p$, whereas $\oc(f_p)=p$.
\end{proof}

\section{Exact toral profiles and entropy thresholds}

We now compute the full visibility profile on a natural family.  Let
$A\in M_d(\Z)$ and let
\[
  f_A\colon\T^d\to\T^d,
  \qquad
  f_A([x])=[Ax].
\]

\begin{lemma}
\label{lem:abelianquotients}
If $C$ is a finite abelian group of order $D$, then for every divisor
$d\mid D$ there exists a quotient of $C$ of order $d$.
\end{lemma}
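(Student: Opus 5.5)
The plan is to reduce the statement to the existence of subgroups of every divisor order, and then to use duality. Passing to quotients of order $d$ is the same as finding subgroups of order $D/d$: if $H\le C$ has order $D/d$, then $C/H$ has order $d$, and since $C$ is abelian every subgroup is normal. As $d$ runs over the divisors of $D$, so does $D/d$, so it suffices to show that $C$ has a subgroup of every order dividing $D$.

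For the subgroups we use the structure theorem, $C\cong\bigoplus_p C_p$, with $C_p$ the Sylow $p$-subgroup of order $p^{a_p}$. Write a divisor $e$ of $D$ as $\prod_p p^{b_p}$ with $b_p\le a_p$. A subgroup of order $e$ is then the direct sum of subgroups of $C_p$ of order $p^{b_p}$, so it is enough to treat the $p$-group case.

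That case goes by induction on the order. A nontrivial finite abelian $p$-group $P$ has an element of order $p$, by Cauchy or directly from a cyclic factor, and hence a subgroup $Z$ of order $p$. For $1\le b\le a$, induction gives a subgroup of $P/Z$ of order $p^{b-1}$, and its preimage in $P$ has order $p^b$. The case $b=0$ is the trivial subgroup.

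Alternatively, one can write $C\cong\bigoplus_i\Z/n_i$ and build the quotient directly, sending $\Z/n_i\to\Z/m_i$ with $m_i\mid n_i$ and $\prod m_i=d$. Choosing the $m_i$ requires distributing the prime powers of $d$ among the cyclic factors, which is the same bookkeeping as above. The only step needing any care is this prime-by-prime distribution of the exponents, and it is routine, so I do not expect a real obstacle.
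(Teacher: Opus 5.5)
Your proposal is correct and follows essentially the same route as the paper: decompose $C$ into primary components, obtain subgroups of every $p$-power order in each component, take their direct sum to get a subgroup of index $d$, and pass to the quotient. The only minor difference is that you prove the $p$-group case by induction through an order-$p$ subgroup $Z$ and the quotient $P/Z$, whereas the paper inducts using the decomposition into cyclic $p$-groups; either way works.
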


\begin{proof}
Decompose $C$ into its primary components.  A finite abelian $p$-group has a
subgroup of every order $p^j$ between $1$ and its full order, as follows
immediately by induction from its decomposition into cyclic $p$-groups.
Taking products over the primes gives a subgroup of index $d$, and the
corresponding quotient has order $d$.
\end{proof}

For a positive integer $D$, put
\[
  \delta_D(B):=\max\{d:d\mid D,\ d\le B\}.
\]

\begin{theorem}[Exact toral visibility profile]
\label{thm:toralprofile}
If
\[
  D:=|\det(I-A)|\ne0,
\]
then for every $B\ge1$,
\[
  \boxed{\VN_{f_A}(B)=\delta_D(B).}
\]
Consequently,
\[
  \dc(f_A)=1,
  \qquad
  \oc(f_A)=N(f_A)=D.
\]
\end{theorem}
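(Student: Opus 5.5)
The plan is to reduce everything to the abelian group $\Z^d$ with $\phi=A$. Here twisted conjugacy is $b=g+a-Ag$, so $\RN(A)=\coker(I-A)=\Z^d/(I-A)\Z^d$, a finite abelian group of order $D$. For the toral map $f_A$ with $D\neq0$, every Reidemeister class is essential with index $\pm1$, and all indices have the common sign $\operatorname{sign}\det(I-A)$. This is standard Anosov/Brooks--Brown theory for nilmanifolds, cited via \cite{Jiang}. Hence
\[
\RT(f_A)=\pm\sum_{\alpha\in\coker(I-A)}[\alpha],\qquad N(f_A)=D .
\]
Because all coefficients share one sign, no cancellation can occur. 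Therefore $V_N(f_A)=R_N(f_A)$ is simply the number of image classes. Also $\RT_{\Z^d}(f_A)=L(f_A)[*]$ with $|L(f_A)|=D\neq0$, so $\dc(f_A)=1$ by Proposition~\ref{prop:threelevels}(v).

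Next I compute $V_N$ for an arbitrary observer $N$, which is a finite-index subgroup with $AN\subseteq N$. The quotient $Q=\Z^d/N$ is abelian, and $\bar A$ acts on it. Its Reidemeister set is $\RN(\bar A)=Q/(I-\bar A)Q=\Z^d/(N+(I-A)\Z^d)$. The map $(q_N)_R$ is the surjection
\[
\coker(I-A)\twoheadrightarrow \Z^d/(N+(I-A)\Z^d).
\]
So $V_N(f_A)=[\Z^d:N+(I-A)\Z^d]$. This number divides $D$, and it also divides $[\Z^d:N]$, so it is at most $[\Z^d:N]\le B$. Hence $\VN_{f_A}(B)\le\delta_D(B)$.

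For the reverse inequality, take any $d\mid D$ with $d\le B$. Lemma~\ref{lem:abelianquotients} gives a subgroup $H\le\coker(I-A)$ of index $d$. Let $N\supseteq(I-A)\Z^d$ be its preimage in $\Z^d$, so that $[\Z^d:N]=d\le B$. The key check is $A$-invariance: for $n\in N$, we have $An=n-(I-A)n\in N$, since $N$ contains $(I-A)\Z^d$. Thus $N$ is a finite observer, and $V_N=[\Z^d:N+(I-A)\Z^d]=[\Z^d:N]=d$. Taking $d=\delta_D(B)$ gives equality. Finally, Proposition~\ref{prop:basic}(iv) gives $\oc(f_A)=\min\{B:\delta_D(B)=D\}=D=N(f_A)$.

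The only step that is not formal is the first one: every class is essential with a single-signed index $\pm1$. I would justify it by citing the standard fact that for linear toral maps each lift has exactly one fixed point, with index $\operatorname{sign}\det(I-A)$. The rest is straightforward abelian-group bookkeeping.
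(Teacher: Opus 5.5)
Your proof is correct and follows essentially the same route as the paper. Both arguments identify $\RN(A)$ with $\coker(I-A)$, use the single-signed unit indices to rule out cancellation, and bound $V_N(f_A)$ by the order of a common quotient of $\coker(I-A)$ and $\Z^d/N$. Both then realize each divisor $d\mid D$ via the preimage of an index-$d$ subgroup of $\coker(I-A)$, which is $A$-invariant because it contains $(I-A)\Z^d$. Your explicit formula $V_N(f_A)=[\Z^d:N+(I-A)\Z^d]$ is a concrete form of the paper's ``order of an image of $C_A$'' step.
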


\begin{proof}
For the abelian group $G=\Z^d$, the $A$-twisted conjugacy relation is
\[
  u\sim_A v
  \quad\Longleftrightarrow\quad
  v-u\in(I-A)\Z^d.
\]
Hence
\[
  \RN(A)\cong C_A:=\coker(I-A),
  \qquad |C_A|=D.
\]
The fixed points are isolated, and every Reidemeister class has local index
$\operatorname{sgn}\det(I-A)$.  Thus all $D$ classes are essential and there
is no cancellation under any quotient.

Let $N$ be any finite observer with quotient $Q=\Z^d/N$.  The induced map on
Reidemeister classes is a homomorphism
\[
  C_A\longrightarrow\coker(I-\bar A).
\]
Therefore $V_N(f_A)$ is the order of an image of $C_A$.  In particular,
$V_N(f_A)$ divides $D$ and
\[
  V_N(f_A)\le |Q|.
\]
Every observer of order at most $B$ therefore satisfies
\[
  V_N(f_A)\le\delta_D(B).
\]

Conversely, let $d\mid D$.  By Lemma~\ref{lem:abelianquotients}, choose a
quotient $C_A\twoheadrightarrow Q_d$ of order $d$, and let $N_d$ be the
preimage of its kernel under $\Z^d\to C_A$.  Since $A$ acts as the identity on
$C_A$, the subgroup $N_d$ is $A$-invariant.  The resulting observer has order
$d$, the induced endomorphism on $Q_d$ is the identity, and the class map
$C_A\to Q_d$ is onto.  Hence $V_{N_d}(f_A)=d$.  Maximizing over divisors
$d\le B$ proves the formula.

Finally, $L(f_A)=\det(I-A)\ne0$, so Proposition~\ref{prop:threelevels} gives
$\dc(f_A)=1$, while the profile formula gives $\oc(f_A)=D$.
\end{proof}

\begin{remark}[Geometric meaning of the toral profile]
By Theorem~\ref{thm:coverlift}, the $D$ Reidemeister classes are the
 deck-conjugacy classes of the relevant lifts.  A compatible cover of degree
at most $B$ can keep at most $\delta_D(B)$ of these lift classes distinct,
and a cover of that degree exists.  Thus the divisor staircase is the exact
geometric resolution curve for the essential fixed-point classes, rather
than only an arithmetic reformulation of the determinant.
\end{remark}

\begin{proposition}[The eight toral fixed points and their finite-cover fusion]
\label{prop:eighttoralfixedpoints}
Let
\[
  A=\begin{pmatrix}10&-1\\[1mm]1&0\end{pmatrix}.
\]
Then the toral endomorphism $f_A\colon\T^2\to\T^2$ has exactly eight
fixed points,
\[
  P_j=\left(\frac{j}{8},\frac{j}{8}\right),
  \qquad j=0,\ldots,7.
\]
They lie equally spaced on the diagonal circle of $\T^2$, and
\[
  \operatorname{ind}(f_A,P_j)
  =\operatorname{sgn}\det(I-A)
  =-1
\]
for every $j$.  Under addition, these points realize
\[
  \operatorname{coker}(I-A)\cong\Z/8\Z,
\]
with $P_1$ as a generator.

For each divisor $d\mid8$, the canonical quotient
\[
  \Z/8\Z\twoheadrightarrow\Z/d\Z
\]
is realized by a compatible regular cover of degree $d$.  In this
observer, the Nielsen classes represented by $P_i$ and $P_j$ have the same
observed deck-conjugacy label if and only if
\[
  i\equiv j\pmod d.
\]
Consequently, the eight fixed points are observed through the successive
partitions
\[
\begin{array}{c|l}
 d & \text{observed partition}\\ \hline
 1 & \{P_0,\ldots,P_7\},\\
 2 & \{P_0,P_2,P_4,P_6\},\ \{P_1,P_3,P_5,P_7\},\\
 4 & \{P_0,P_4\},\ \{P_1,P_5\},\ \{P_2,P_6\},\ \{P_3,P_7\},\\
 8 & \{P_0\},\ldots,\{P_7\}.
\end{array}
\]
Thus
\[
  \VN_{f_A}(B)=\delta_8(B)
\]
is the exact record of how these eight genuine fixed points become
distinguishable as the covering degree increases.
\end{proposition}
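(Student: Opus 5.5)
We first solve for the fixed points directly and then read off the cover data from Theorem~\ref{thm:toralprofile} and Theorem~\ref{thm:coverlift}. We have
\[
  I-A=\begin{pmatrix}-9&1\\-1&1\end{pmatrix},
  \qquad \det(I-A)=-9+1=-8 .
\]
Hence $D=8$, and every fixed point is nondegenerate with index $\operatorname{sgn}\det(I-A)=-1$. We argue in the following order.

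\textbf{Step 1: locate the fixed points.} A point $[x]\in\T^2$ is fixed exactly when $(I-A)x\in\Z^2$, so $\Fix(f_A)=(I-A)^{-1}\Z^2/\Z^2$, a group of order $8$. The inverse is
\[
  (I-A)^{-1}=-\tfrac18\begin{pmatrix}1&-1\\1&-9\end{pmatrix}.
\]
Its columns are $-\tfrac18(1,1)$ and $\tfrac18(1,9)\equiv\tfrac18(1,1)$ modulo $\Z^2$. So the fixed-point group is cyclic, generated by $P_1=(1/8,1/8)$, and consists of the points $P_j=(j/8,j/8)$. These lie equally spaced on the diagonal circle. As a check, $A(1,1)^T=(9,1)^T\equiv(1,1)^T$ modulo $8$.

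\textbf{Step 2: identify the fixed points with Reidemeister classes.} For a linear toral map, the fixed point $[x]$ with $(I-A)x=v\in\Z^2$ lies in the Reidemeister class $[v]\in\coker(I-A)$, up to sign convention. This is the standard lift computation: the lift $x\mapsto Ax+v$ fixes $x$. The map $x\mapsto(I-A)x$ induces an isomorphism
\[
  (I-A)^{-1}\Z^2/\Z^2\;\xrightarrow{\ \cong\ }\;\Z^2/(I-A)\Z^2 .
\]
Therefore each $P_j$ is its own Nielsen class, the addition on fixed points matches the group structure of $\coker(I-A)\cong\Z/8$, and $P_1$ goes to a generator.

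\textbf{Step 3: build the degree-$d$ covers.} For $d\mid 8$, we follow the converse part of Theorem~\ref{thm:toralprofile}. Let $N_d$ be the preimage in $\Z^2$ of $d\,(\Z/8)$ under $\Z^2\to\coker(I-A)\cong\Z/8$. This subgroup is $A$-invariant because $A$ acts trivially on the cokernel, it has index $d$, and the induced map on the quotient is the identity. By Theorem~\ref{thm:coverlift}, the labels of $P_i$ and $P_j$ agree in this cover iff their classes agree modulo $d$. Under the identification of Step 2, that condition is $i\equiv j\pmod d$.

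\textbf{Step 4: read off the table.} The partition table follows from Step 3 for $d=1,2,4,8$. The profile equation $\VN_{f_A}(B)=\delta_8(B)$ is the case $D=8$ of Theorem~\ref{thm:toralprofile}.

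The only delicate point is Step 2: confirming that the isomorphism $\Fix\to\coker(I-A)$ is compatible with the Reidemeister labelling, so that the generator and the sign conventions line up. Any sign ambiguity sends $j$ to $-j$, which leaves every congruence class modulo $d$ unchanged, so the table is unaffected.
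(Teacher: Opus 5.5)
Your proposal is correct and matches the paper's route. The paper gives no separate proof: it presents the proposition as the $D=8$ case of Theorem~\ref{thm:toralprofile}, whose converse construction is exactly your observer $N_d$, read through the lift labelling of Theorem~\ref{thm:coverlift}, with the fixed points found by the direct computation you carry out. Your treatment of the sign convention is sound, and in fact any identification $\coker(I-A)\cong\Z/8\Z$ would do, because $d\Z/8\Z$ is the unique subgroup of index $d$, so the condition $i\equiv j \pmod d$ does not depend on the chosen generator.
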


\begin{figure}[H]
\centering
\includegraphics[width=0.98\textwidth]{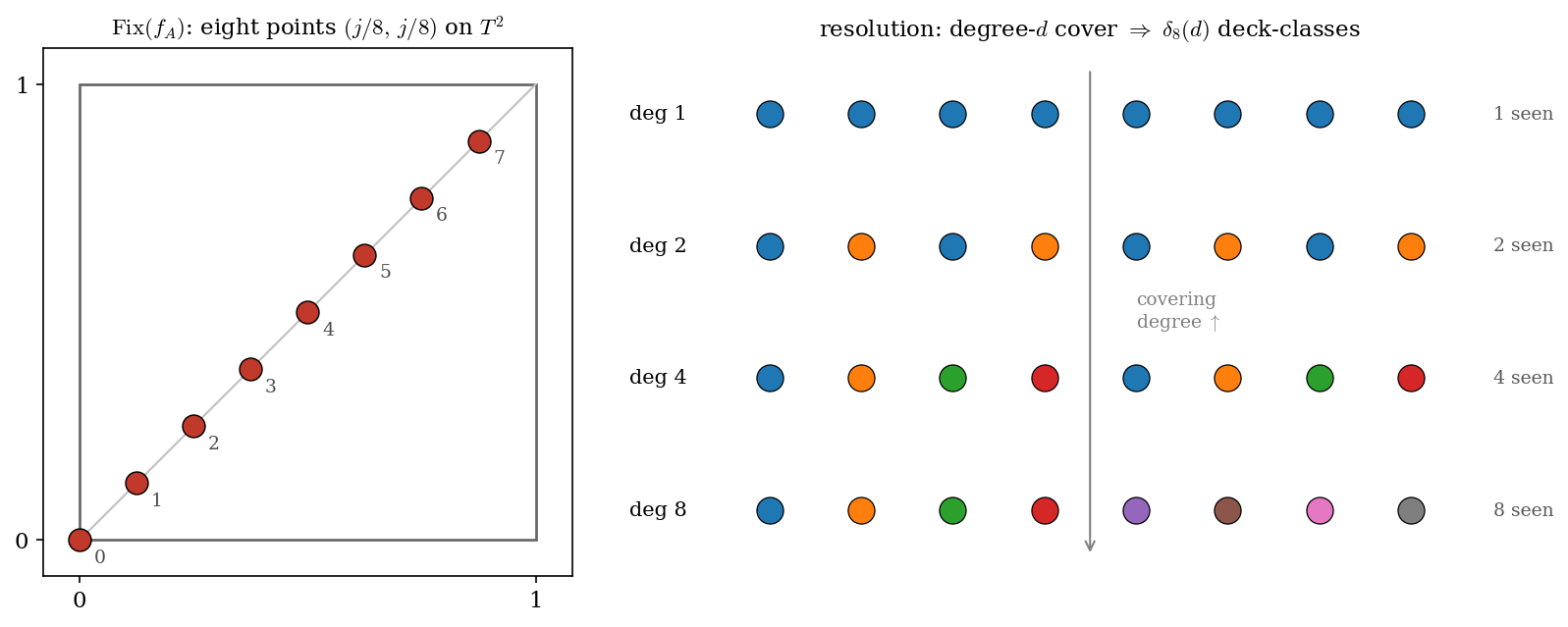}
\caption{The eight fixed points of $f_A$ on the diagonal circle of $\T^2$
and their finite-cover resolution.  The right panel shows the partitions of
the points produced by the canonical observers of degrees $1,2,4$, and $8$.
At degree $d$, two fixed-point classes have the same observed label exactly
when their indices are congruent modulo $d$.}
\label{fig:torusfixedpoints}
\end{figure}

\begin{definition}[Observer entropy]
For a self-map whose iterates have finite observer complexity, define
\[
  h_{\mathrm{oc}}(f)
  :=\limsup_{k\to\infty}
  \frac1k\log\max\{1,\oc(f^k)\}.
\]
\end{definition}

\begin{proposition}[Observer complexity--entropy identity]
\label{prop:entropy}
If $A\in GL(d,\Z)$ is hyperbolic, then for every $k\ge1$,
\[
  \VN_{f_A^k}(B)
  =\delta_{D_k}(B),
  \qquad
  \oc(f_A^k)=D_k,
  \qquad
  D_k:=|\det(I-A^k)|,
\]
and
\[
  h_{\mathrm{oc}}(f_A)=h_{\mathrm{top}}(f_A).
\]
\end{proposition}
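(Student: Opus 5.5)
The plan is to reduce the first two assertions to Theorem~\ref{thm:toralprofile} applied to the matrix $A^k$, and then to compute the exponential growth of $D_k$ from the eigenvalues of $A$.

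The first step is to note that $f_A^k=f_{A^k}$ as maps of $\T^d$. Hyperbolicity means that no eigenvalue $\lambda$ of $A$ has $|\lambda|=1$. In particular no eigenvalue of $A^k$ equals $1$, since $\lambda^k=1$ would force $|\lambda|=1$. Hence
\[
  D_k=|\det(I-A^k)|=\prod_{\lambda}|1-\lambda^k|\ne0 .
\]
Theorem~\ref{thm:toralprofile}, applied to $A^k$, then gives $\VN_{f_A^k}(B)=\delta_{D_k}(B)$ and $\oc(f_A^k)=D_k$ for every $k\ge1$. Since $\oc$ depends only on the homotopy class (Proposition~\ref{prop:homotopyinvariance}), nothing is lost by working with the linear representative. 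Invertibility of $A$ over $\Z$ is not needed for this step; it matters only for the entropy identification below.

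Next comes the limit. Because $D_k\ge1$ is a positive integer, $\max\{1,\oc(f_A^k)\}=D_k$. So
\[
  \frac1k\log D_k=\sum_{\lambda}\frac1k\log|1-\lambda^k| .
\]
For $|\lambda|>1$ we have $|1-\lambda^k|=|\lambda|^k\,|1-\lambda^{-k}|$, and $|\lambda^{-k}|\to0$, so this term tends to $\log|\lambda|$. For $|\lambda|<1$ we have $|1-\lambda^k|\to1$, so this term tends to $0$. Therefore the limit exists, and
\[
  h_{\mathrm{oc}}(f_A)=\sum_{|\lambda|>1}\log|\lambda| .
\]

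The last step invokes the classical formula for the topological entropy of a toral automorphism (Bowen, Sinai), $h_{\mathrm{top}}(f_A)=\sum_{|\lambda|>1}\log|\lambda|$, which finishes the proof.

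The only delicate point is the uniform control of the factors $|1-\lambda^{\pm k}|$. Here hyperbolicity is essential: without it, eigenvalues on the unit circle could make $|1-\lambda^k|$ arbitrarily small or even zero, and $D_k$ could vanish. With hyperbolicity the factors converge individually, so the argument is routine.
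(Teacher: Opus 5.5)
Your proposal is correct and takes essentially the same approach as the paper. Both apply Theorem~\ref{thm:toralprofile} to $A^k$, using that hyperbolicity excludes $1$ as an eigenvalue of $A^k$. Both then evaluate $\lim_k\frac1k\log D_k$ eigenvalue by eigenvalue and match the result with the standard entropy formula $\sum_{|\lambda|>1}\log|\lambda|$.
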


\begin{proof}
Hyperbolicity implies that $1$ is not an eigenvalue of any $A^k$, so
Theorem~\ref{thm:toralprofile} applies to every iterate.  Let
$\lambda_1,\dots,\lambda_d$ be the complex eigenvalues of $A$, counted with
algebraic multiplicity.  Then
\[
  \frac1k\log D_k
  =\sum_{j=1}^d\frac1k\log|1-\lambda_j^k|.
\]
If $|\lambda_j|<1$, the corresponding term tends to zero.  If
$|\lambda_j|>1$, it tends to $\log|\lambda_j|$.  Hence
\[
  h_{\mathrm{oc}}(f_A)
  =\sum_{|\lambda_j|>1}\log|\lambda_j|,
\]
which is the standard entropy formula for a hyperbolic toral automorphism;
see, for example, \cite{Walters}.
\end{proof}

\begin{theorem}[Exponential resolution threshold]
\label{thm:threshold}
Let $A\in GL(d,\Z)$ be hyperbolic, set $h=h_{\mathrm{top}}(f_A)$, and let
$B_k\in\N$ satisfy
\[
  \lim_{k\to\infty}\frac1k\log B_k=\alpha.
\]
Then:
\begin{enumerate}[label=(\roman*)]
\item if $\alpha<h$, then
      \[
        \frac{\VN_{f_A^k}(B_k)}{N(f_A^k)}\longrightarrow0;
      \]
\item if $\alpha>h$, then
      \[
        \VN_{f_A^k}(B_k)=N(f_A^k)
      \]
      for all sufficiently large $k$.
\end{enumerate}
No universal conclusion at the critical value $\alpha=h$ is possible from
entropy alone; there the divisor structure of $D_k$ matters.
\end{theorem}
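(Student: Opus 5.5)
The proof rests on Proposition~\ref{prop:entropy}, which gives $\VN_{f_A^k}(B)=\delta_{D_k}(B)$ and $N(f_A^k)=\oc(f_A^k)=D_k$ with $D_k=|\det(I-A^k)|$, together with $\frac1k\log D_k\to h$ from its proof. Everything then reduces to elementary estimates on the divisor-staircase function $\delta_D$. The only general facts about $\delta_D$ needed are $\delta_D(B)\le\min(B,D)$, and $\delta_D(B)=D$ whenever $B\ge D$.

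For (i), assume $\alpha<h$ and use the bound
\[
  \frac{\VN_{f_A^k}(B_k)}{N(f_A^k)}=\frac{\delta_{D_k}(B_k)}{D_k}\le\frac{B_k}{D_k}.
\]
Choose $\varepsilon>0$ with $\alpha+\varepsilon<h-\varepsilon$. For large $k$ we have $B_k\le e^{k(\alpha+\varepsilon)}$ and $D_k\ge e^{k(h-\varepsilon)}$. The ratio is therefore at most $e^{-k(h-\alpha-2\varepsilon)}\to0$.

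For (ii), assume $\alpha>h$ and choose $\varepsilon$ with $h+\varepsilon<\alpha-\varepsilon$. For large $k$ this gives $B_k\ge e^{k(\alpha-\varepsilon)}>e^{k(h+\varepsilon)}\ge D_k$. Since $D_k\mid D_k$, we get $\delta_{D_k}(B_k)=D_k=N(f_A^k)$.

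The only point requiring care is that $D_k\ge1$ and $\frac1k\log D_k\to h$ genuinely hold for every $k$, not just along a subsequence. The proof of Proposition~\ref{prop:entropy} gives this. Hyperbolicity ensures that no $\lambda_j$ has modulus $1$, so $|1-\lambda_j^k|\to1$ when $|\lambda_j|<1$, and $\frac1k\log|1-\lambda_j^k|\to\log|\lambda_j|$ when $|\lambda_j|>1$. Also $D_k\ne0$ because $A^k$ has no eigenvalue $1$.

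The final sentence about $\alpha=h$ is a remark rather than a claim. I would justify it with two admissible choices of $B_k$, both satisfying $\frac1k\log B_k\to h$. Taking $B_k=D_k$ gives full visibility for every $k$. Taking $B_k=D_k-1$ gives $\delta_{D_k}(D_k-1)\le D_k/2$, since the largest proper divisor of $D_k$ is at most $D_k/2$. In that case the ratio stays at most $1/2$, even though both sequences have growth rate exactly $h$.

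Either choice already excludes a universal limit $0$. Together they exclude any universal conclusion at $\alpha=h$, because the outcome depends on the precise divisor structure of $D_k$ relative to $B_k$.
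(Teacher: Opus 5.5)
Your argument is correct and is essentially the paper's proof: for (i) both bound $\VN_{f_A^k}(B_k)\le B_k$ against $N(f_A^k)=D_k=\exp(hk+o(k))$, and for (ii) both use $B_k\ge D_k$ eventually together with the exact staircase $\delta_{D_k}$. Your justification of the critical case goes beyond the paper, which states it without proof, but two details need fixing: only $B_k=D_k$ rules out a universal limit $0$, whereas the second choice only rules out eventual full visibility, and it should be $B_k=\max(1,D_k-1)$, since, e.g., $D_1=1$ for $A=\bigl(\begin{smallmatrix}2&1\\1&1\end{smallmatrix}\bigr)$.
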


\begin{proof}
By Proposition~\ref{prop:entropy},
\[
  N(f_A^k)=D_k=\exp(hk+o(k)).
\]
If $\alpha<h$, then $\VN_{f_A^k}(B_k)\le B_k$, and hence
\[
  \frac{\VN_{f_A^k}(B_k)}{N(f_A^k)}
  \le\exp\bigl(-(h-\alpha)k+o(k)\bigr)\longrightarrow0.
\]
If $\alpha>h$, then $B_k\ge D_k$ eventually, so the exact profile gives full
resolution.
\end{proof}

Proposition~\ref{prop:entropy} and Theorem~\ref{thm:threshold} give a dynamical meaning to
observer complexity: entropy is the critical exponential rate of finite-cover
resources required to resolve the essential periodic fixed-point spectrum.

\begin{definition}[Observer-complexity zeta function]
\label{def:oczeta}
Suppose that $\oc(f^n)<\infty$ for every $n\ge1$.  The
observer-complexity zeta function is the formal power series
\[
  \zetaoc(f;z)
  :=\exp\left(\sum_{n\ge1}\frac{\oc(f^n)}{n}z^n\right).
\]
\end{definition}

\begin{corollary}[Toral fixed-point interpretation]
\label{cor:toralzeta}
Let $A\in GL(d,\Z)$ be hyperbolic.  Then
\[
  \oc(f_A^n)=\#\Fix(f_A^n)=|\det(I-A^n)|
  \qquad(n\ge1),
\]
and consequently
\[
  \zetaoc(f_A;z)=\zeta^{\mathrm{AM}}_{f_A}(z).
\]
In particular, $\zetaoc(f_A;z)$ is rational.
\end{corollary}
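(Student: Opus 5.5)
By Proposition~\ref{prop:entropy}, for a hyperbolic $A\in GL(d,\Z)$ every iterate $A^n$ has $\det(I-A^n)\ne0$. Theorem~\ref{thm:toralprofile} therefore applies to $f_A^n=f_{A^n}$ and gives $\oc(f_A^n)=N(f_A^n)=|\det(I-A^n)|$. To identify this number with $\#\Fix(f_A^n)$, I would use the first paragraph of the proof of Theorem~\ref{thm:toralprofile}, applied to $A^n$. The fixed points of $f_{A^n}$ are the solutions of $(I-A^n)x\in\Z^d$ modulo $\Z^d$, which form the group $(I-A^n)^{-1}\Z^d/\Z^d\cong\coker(I-A^n)$ of order $|\det(I-A^n)|$. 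Each fixed point is isolated with index $\operatorname{sgn}\det(I-A^n)\neq0$. Each Nielsen class contains exactly one fixed point: the classes are in bijection with $\RN(A^n)\cong\coker(I-A^n)$, and both sets have the same cardinality. This gives the first displayed identity.

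For the zeta function, substitute into Definition~\ref{def:oczeta}:
\[
  \zetaoc(f_A;z)=\exp\left(\sum_{n\ge1}\frac{\#\Fix(f_A^n)}{n}z^n\right),
\]
which is by definition the Artin--Mazur zeta function $\zeta^{\mathrm{AM}}_{f_A}(z)$, with all $\Fix(f_A^n)$ finite. Rationality is then the standard computation. Write $\det(I-A^n)=\prod_j(1-\lambda_j^n)$, and set $\sigma_n=\operatorname{sgn}\det(I-A^n)$. For hyperbolic $A$ this sign has the form $\sigma_n=\epsilon_1\epsilon_2^n$, where $\epsilon_2=(-1)^{u}\operatorname{sgn}\det A_u$ with $u$ the number of eigenvalues outside the unit circle. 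More concretely, $\sigma_n=\prod_{|\lambda_j|>1}\operatorname{sgn}(-\lambda_j^n)$, after pairing complex conjugate eigenvalues, whose factors are positive.

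Next, expand $\prod_j(1-\lambda_j^n)=\sum_{k}(-1)^k\operatorname{tr}\Lambda^kA^n$. Then
\[
  \sum_n\frac{\sigma_n\det(I-A^n)}{n}z^n=\sum_k(-1)^k\epsilon_1\sum_n\frac{\operatorname{tr}\Lambda^k A^n}{n}(\epsilon_2z)^n ,
\]
and exponentiating gives
\[
  \zetaoc(f_A;z)=\prod_k\det(I-\epsilon_2z\Lambda^kA)^{(-1)^{k+1}\epsilon_1},
\]
a rational function. Alternatively, I could simply cite the classical rationality of the Artin--Mazur zeta function of hyperbolic toral automorphisms, for example via Markov partitions or the Lefschetz formula, as in \cite{Walters}.

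The only step needing care is the sign bookkeeping showing that $\sigma_n$ is of the form $\epsilon_1\epsilon_2^n$. This is what makes the exponent sums geometric in each $\Lambda^k$ and hence makes the zeta function rational. I would handle it by sorting the real eigenvalues by absolute value and sign: for $|\lambda|<1$ the factor $1-\lambda^n$ is positive, and for $|\lambda|>1$ its sign is that of $-\lambda^n$.
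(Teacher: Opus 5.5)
Your argument matches the paper's: $\oc(f_A^n)=|\det(I-A^n)|$ comes from Proposition~\ref{prop:entropy}, you count the fixed points as $|\coker(I-A^n)|=|\det(I-A^n)|$, and the zeta functions agree by definition. The only difference is that you derive rationality yourself via $\prod_k\det(I-\epsilon_2z\Lambda^kA)^{(-1)^{k+1}\epsilon_1}$, where the paper simply cites \cite{BaakeLauPaskunas}. That makes your version self-contained.

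There is one small slip in the sign constants: they should be $\epsilon_1=(-1)^u$ and $\epsilon_2=\operatorname{sgn}\det A_u$. Your $\epsilon_2=(-1)^u\operatorname{sgn}\det A_u$ would give alternating signs when $u=1$ and $\det A_u>0$, whereas in fact $\sigma_n\equiv-1$ there. This does not affect the argument, because your eigenvalue-by-eigenvalue sign check correctly gives $\sigma_n=\epsilon_1\epsilon_2^n$.
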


\begin{proof}
The first equality is Proposition~\ref{prop:entropy}.  Hyperbolicity implies that
$1$ is not an eigenvalue of $A^n$ for any $n$, so the fixed points of $f_A^n$
are isolated and their number is $|\det(I-A^n)|$.  The equality of zeta
functions follows from their definitions.  Rationality is classical for
toral endomorphisms; see \cite{BaakeLauPaskunas}.
\end{proof}

\section{Cohomological optimal resolution on torus bundles}
\label{sec:cohomological}

The exact Heisenberg calculation below is a special case of a more general
mechanism.  The additional finite-cover cost is controlled by the degree-two
class of a central torus extension after reduction modulo the central
Reidemeister lattice.

Let
\[
  V\cong\Z^n,
  \qquad
  Z\cong\Z^m,
\]
and let
\[
  0\longrightarrow Z\longrightarrow \Gamma
  \longrightarrow V\longrightarrow0
\]
be a central extension.  Since $V$ is free abelian, its extension class may be
viewed as an element
\[
  [\omega]\in H^2(V;Z)
  \cong H^2(\T^n;\Z^m)
  \cong \operatorname{Hom}(\Lambda^2V,Z);
\]
see, for example, \cite{Brown,Husemoller,PalaisStewart}.  Choose an integral bilinear cocycle
$\beta\colon V\times V\to Z$ representing the extension and write
\[
  \Gamma_\beta=V\times Z,
  \qquad
  (x,z)(y,w)=\bigl(x+y,z+w+\beta(x,y)\bigr).
\]
The commutator form is
\[
  \omega(x,y)=\beta(x,y)-\beta(y,x).
\]
Under the above identification, $\omega\colon\Lambda^2V\to Z$ represents the
characteristic class of the associated principal torus bundle.

Consider an endomorphism
\[
  \phi(x,z)=\bigl(Ax,Cz+\lambda(x)\bigr)
  \qquad (x\in V,\ z\in Z),
\]
where $A\in\End(V)$, $C\in\End(Z)$, and $\lambda\colon V\to Z$ satisfies
\begin{equation}
  \lambda(x+y)-\lambda(x)-\lambda(y)
  =\beta(Ax,Ay)-C\beta(x,y).
  \label{eq:cocyclecompatibility}
\end{equation}
Thus $\phi$ is a group endomorphism, and antisymmetrizing
\eqref{eq:cocyclecompatibility} gives
\begin{equation}
  C\omega(x,y)=\omega(Ax,Ay).
  \label{eq:characteristiccompatibility}
\end{equation}
Topologically, \eqref{eq:characteristiccompatibility} is the compatibility
condition $A^*[\omega]=C_*[\omega]$ for a bundle self-map.

Set
\[
  B:=I-A,
  \qquad
  D:=I-C,
\]
and assume throughout this section that
\begin{equation}
  \det B\ne0,
  \qquad
  \det D\ne0.
  \label{eq:finiteRregime}
\end{equation}
Define the full Reidemeister resolution degree of $\phi$ by
\begin{align*}
  \oc_R(\phi):=\min\bigl\{[\Gamma_\beta:N]:{}&
  N\triangleleft\Gamma_\beta,\ [\Gamma_\beta:N]<\infty,\\
  &\phi(N)\subseteq N,\text{ and }(q_N)_R
  \text{ is injective on }\RN(\phi)\bigr\}.
\end{align*}
For nilmanifold maps in the finite-Reidemeister regime all Reidemeister classes
are essential; see, for example,
\cite{DekimpeDugardein,FelLee,DeconinckDekimpe}.  Hence this quantity agrees
with the observer complexity defined in Section~5.

Reduce the characteristic form modulo the central Reidemeister lattice:
\[
  \bar\omega_D\colon\Lambda^2V\longrightarrow Z/DZ.
\]
Its radical is
\begin{equation}
  R_D(\omega)
  :=\left\{p\in V:\omega(V,p)\subseteq DZ\right\}.
  \label{eq:reducedradical}
\end{equation}
It has finite index in $V$: if $e$ is the exponent of the finite group
$Z/DZ$, then $eV\subseteq R_D(\omega)$.

\begin{lemma}[Invariance of the reduced radical]
\label{lem:radicalinvariance}
Under \eqref{eq:characteristiccompatibility},
\[
  A^{-1}R_D(\omega)=R_D(\omega).
\]
In particular, $A(R_D(\omega))\subseteq R_D(\omega)$.
\end{lemma}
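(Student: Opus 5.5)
The plan is to reduce everything to one congruence: modulo $DZ$, the endomorphism $C$ acts as the identity. Since $C=I-D$, for every $z\in Z$ we have $Cz=z-Dz\equiv z\pmod{DZ}$. Combining this with the characteristic compatibility \eqref{eq:characteristiccompatibility} gives, for all $x,p\in V$,
\[
  \omega(Ax,Ap)=C\omega(x,p)\equiv\omega(x,p)\pmod{DZ}.
\]
In other words, the reduced form $\bar\omega_D$ is $A$-invariant: $\bar\omega_D(Ax,Ap)=\bar\omega_D(x,p)$. No invertibility of $A$ is needed for this step, only \eqref{eq:characteristiccompatibility} and the definition of $D$.

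\emph{First inclusion $A^{-1}R_D(\omega)\subseteq R_D(\omega)$.} Suppose $Ap\in R_D(\omega)$. Then $\omega(V,Ap)\subseteq DZ$, and in particular $\omega(Ax,Ap)\in DZ$ for every $x\in V$. By the congruence above, $\omega(x,p)\in DZ$ for all $x\in V$, so $p\in R_D(\omega)$.

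\emph{Reverse inclusion.} This is the main obstacle. For $p\in R_D(\omega)$ we would need to control $\omega(y,Ap)$ for arbitrary $y\in V$, but $y$ need not lie in $AV$: we only assume $\det(I-A)\ne0$, so $A$ itself may be singular. I would therefore avoid a direct argument and use a counting argument based on the finite index of $R_D(\omega)$, which was noted just before the lemma (it contains $eV$).

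Consider the homomorphism
\[
  V\longrightarrow V/R_D(\omega),\qquad p\longmapsto Ap+R_D(\omega).
\]
Its kernel is exactly $A^{-1}R_D(\omega)$, so $V/A^{-1}R_D(\omega)$ embeds in $V/R_D(\omega)$. Hence
\[
  [V:A^{-1}R_D(\omega)]\le[V:R_D(\omega)]<\infty.
\]
On the other hand, the first inclusion gives $[V:A^{-1}R_D(\omega)]\ge[V:R_D(\omega)]$. So the two indices are equal. Two nested subgroups of the same finite index coincide, so $A^{-1}R_D(\omega)=R_D(\omega)$.

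The final assertion $A(R_D(\omega))\subseteq R_D(\omega)$ is then immediate: every $p\in R_D(\omega)=A^{-1}R_D(\omega)$ satisfies $Ap\in R_D(\omega)$.
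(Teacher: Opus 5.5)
Your proposal is correct and matches the paper's proof step for step. You use $C\equiv I \pmod{DZ}$ to get $A$-invariance of $\bar\omega_D$, hence $A^{-1}R_D(\omega)\subseteq R_D(\omega)$, and then the embedding $V/A^{-1}R_D(\omega)\hookrightarrow V/R_D(\omega)$ together with the finiteness of $[V:R_D(\omega)]$ forces equality. This is exactly the paper's counting argument, which, as you note, avoids needing $A$ to be invertible.
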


\begin{proof}
Modulo $DZ$, the endomorphism $C$ is the identity.  Hence
\[
  \bar\omega_D(Ax,Ay)=\bar\omega_D(x,y)
  \qquad(x,y\in V).
\]
If $Ap\in R_D(\omega)$, then
\[
  \bar\omega_D(x,p)=\bar\omega_D(Ax,Ap)=0
  \qquad(x\in V),
\]
so $p\in R_D(\omega)$.  Thus $A^{-1}R_D(\omega)\subseteq R_D(\omega)$.
Moreover,
\[
  V/A^{-1}R_D(\omega)
  \cong (A(V)+R_D(\omega))/R_D(\omega),
\]
so
\[
  [V:A^{-1}R_D(\omega)]\le [V:R_D(\omega)].
\]
The subgroup inclusion gives the reverse inequality.  Therefore equality
holds and $A^{-1}R_D(\omega)=R_D(\omega)$.
\end{proof}

\begin{lemma}[Universal invisible subgroup]
\label{lem:universalinvisible}
For an endomorphism $\psi$ of a group $G$, set
\[
  U_\psi:=\{n\in G:g\sim_\psi gn\text{ for every }g\in G\}.
\]
Let $N\triangleleft G$ satisfy $\psi(N)\subseteq N$.  Then the quotient-class
map
\[
  (q_N)_R\colon\RN(\psi)\longrightarrow\RN(\bar\psi_N)
\]
is injective if and only if $N\subseteq U_\psi$.
\end{lemma}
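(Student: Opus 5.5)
The plan is to unwind injectivity of $(q_N)_R$ directly in terms of the twisted-conjugacy relation. First, I describe the fibers of $q_N$ on Reidemeister classes. Two elements $a,b\in G$ have the same image in $\RN(\bar\psi_N)$ exactly when there exist $h\in G$ and $n\in N$ with
\[
  b=h a\psi(h)^{-1}n .
\]
Lifting a twisting element from $Q_N$ to $G$ gives this form. Since $N$ is normal, the factor from $N$ may be placed on the right.

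Second, I reduce to right translation by elements of $N$. Replacing $a$ by $a'=ha\psi(h)^{-1}$, which is $\psi$-twisted conjugate to $a$, shows that $(q_N)_R([a]_\psi)=(q_N)_R([b]_\psi)$ if and only if $b\sim_\psi a'n$ for some $a'\sim_\psi a$ and some $n\in N$. Hence $(q_N)_R$ is injective if and only if every right translate $gn$, with $g\in G$ and $n\in N$, lies in the same $\psi$-class as $g$, that is, $g\sim_\psi gn$.

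Both directions of the lemma then follow from the definition of $U_\psi$. If $N\subseteq U_\psi$, then for any $a,b$ with equal images we have $b\sim_\psi a'n\sim_\psi a'\sim_\psi a$, so the map is injective. Conversely, suppose the map is injective and $n\in N$. For every $g$, the elements $g$ and $gn$ have the same image in $Q_N$, hence the same quotient class, hence the same class in $\RN(\psi)$. Therefore $n\in U_\psi$.

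The only point needing care is the first step. One must check that every $\bar\psi_N$-twisted conjugation in $Q_N$ lifts to $G$ as $h a\psi(h)^{-1}$ up to an element of $N$. This holds because $q_N$ is surjective and $q_N\circ\psi=\bar\psi_N\circ q_N$; the latter is where the hypothesis $\psi(N)\subseteq N$ enters, since it is what makes $\bar\psi_N$ well defined. I expect no genuine obstacle. The only place that needs attention is the order of factors: with $b=h a\psi(h)^{-1}n$, the relation used is $b\sim_\psi a'n$, with $n$ multiplying on the right, and this matches the definition of $U_\psi$, which uses right translation $g\mapsto gn$.
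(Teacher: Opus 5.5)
Your proposal is correct and follows essentially the same route as the paper. For the forward direction you use that $g$ and $gn$ have the same image in $Q_N$. For the converse you lift a $\bar\psi_N$-twisted conjugacy to $b=ha\psi(h)^{-1}n$ and use $n\in U_\psi$ to absorb the right factor; your explicit justification of this lifting via $q_N\circ\psi=\bar\psi_N\circ q_N$ is a detail the paper leaves implicit.
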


\begin{proof}
If the quotient-class map is injective, then $q_N(g)=q_N(gn)$ for every
$g\in G$ and $n\in N$, so injectivity gives $g\sim_\psi gn$.  Hence
$N\subseteq U_\psi$.

Conversely, suppose $N\subseteq U_\psi$ and that $q_N(g)$ and $q_N(h)$ are
$\bar\psi_N$-twisted conjugate.  There exist $x\in G$ and $n\in N$ such that
\[
  h=xg\psi(x)^{-1}n.
\]
Since $n\in U_\psi$, right multiplication by $n$ does not change the
$\psi$-Reidemeister class of $xg\psi(x)^{-1}$.  Therefore
$g\sim_\psi h$, proving injectivity.
\end{proof}

\begin{lemma}[Twisted coordinates and the invisible subgroup]
\label{lem:centralextensioncoordinates}
For $h=(p,w)$ and $g=(x,z)$ in $\Gamma_\beta$,
\begin{equation}
 h g\phi(h)^{-1}
 =\left(
 x+Bp,
 z+Dw-\lambda(p)+\beta(p,x)-\beta(x,Ap)-\beta(Bp,Ap)
 \right).
 \label{eq:generaltwistedcoordinates}
\end{equation}
Moreover,
\begin{equation}
  U_\phi
  =\left\{
  (Bp,e):
  p\in R_D(\omega),\quad
  e+\lambda(p)+\beta(Bp,Ap)\in DZ
  \right\}.
  \label{eq:invisiblesubgroupformula}
\end{equation}
\end{lemma}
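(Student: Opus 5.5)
The plan is a direct computation in the coordinates $\Gamma_\beta=V\times Z$. Each part uses only bilinearity of $\beta$ and the definition of $\phi$.

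\emph{Step 1: inverses.} For the first formula I first record inverses. Since $\beta$ is bilinear, $\beta(y,-y)=-\beta(y,y)$, so
\[
  (y,u)^{-1}=\bigl(-y,\,-u+\beta(y,y)\bigr).
\]
With $h=(p,w)$ one has $\phi(h)=(Ap,Cw+\lambda(p))$, and therefore
\[
  \phi(h)^{-1}=\bigl(-Ap,\,-Cw-\lambda(p)+\beta(Ap,Ap)\bigr).
\]

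\emph{Step 2: the twisted-conjugacy formula.} Next I compute $hg=(p+x,\,w+z+\beta(p,x))$ and multiply on the right by $\phi(h)^{-1}$. The $V$-coordinate is $x+p-Ap=x+Bp$. The $Z$-coordinate is
\[
  z+(I-C)w-\lambda(p)+\beta(p,x)+\beta(Ap,Ap)-\beta(p+x,Ap).
\]
Regrouping $\beta(Ap,Ap)-\beta(p,Ap)=-\beta(Bp,Ap)$ gives \eqref{eq:generaltwistedcoordinates}. This step is routine bookkeeping.

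\emph{Step 3: reduction to a congruence.} For \eqref{eq:invisiblesubgroupformula}, take $n=(y,e)$ and $g=(x,z)$, so that $gn=(x+y,\,z+e+\beta(x,y))$. By \eqref{eq:generaltwistedcoordinates}, the condition $g\sim_\phi gn$ means there is $(p,w)$ with $Bp=y$ and
\[
  e+\beta(x,y)=Dw-\lambda(p)+\beta(p,x)-\beta(x,Ap)-\beta(Bp,Ap).
\]
Since $\det B\ne0$, $B$ is injective on $V$, so $p$ is determined by $y$, and $y$ must lie in $BV$. Thus every element of $U_\phi$ has the form $(Bp,e)$ with a unique $p$. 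Because $w$ ranges freely, the condition becomes the congruence modulo $DZ$
\[
  e+\lambda(p)+\beta(Bp,Ap)+\beta(x,Bp)+\beta(x,Ap)-\beta(p,x)\in DZ .
\]
Bilinearity gives $\beta(x,Bp)+\beta(x,Ap)=\beta(x,p)$, so the $x$-dependent part collapses to $\beta(x,p)-\beta(p,x)=\omega(x,p)$. The condition for $(Bp,e)\in U_\phi$ is therefore
\[
  e+\lambda(p)+\beta(Bp,Ap)+\omega(x,p)\in DZ\qquad\text{for all }x\in V .
\]
Note that $z$ has dropped out entirely.

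\emph{Step 4: splitting the quantifier.} Taking $x=0$ gives the constant condition $e+\lambda(p)+\beta(Bp,Ap)\in DZ$. Subtracting this from the general case then gives $\omega(x,p)\in DZ$ for all $x$, which says $p\in R_D(\omega)$ by \eqref{eq:reducedradical}. Conversely, the two conditions together clearly imply the congruence for every $x$.

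The only delicate points are keeping the argument order $\omega(x,p)$ consistent with the definition of $R_D(\omega)$, and justifying the uniqueness of $p$ from $\det B\neq0$. I expect the sign and order bookkeeping in Step 3 to be the main, if modest, obstacle.
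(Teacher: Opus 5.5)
Your proposal is correct and follows essentially the same route as the paper: compute $(x,z)^{-1}=(-x,-z+\beta(x,x))$, substitute to obtain the twisted-conjugation formula, use injectivity of $B$ to force $n=(Bp,e)$ with $p$ independent of $g$, reduce to $e+\lambda(p)+\beta(Bp,Ap)+\omega(x,p)\in DZ$ for all $x$, and split this condition by taking $x=0$. You simply spell out the bilinearity bookkeeping that the paper leaves as ``substitution'' and ``comparing the central coordinates.''
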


\begin{proof}
The inverse in $\Gamma_\beta$ is
\[
  (x,z)^{-1}=\bigl(-x,-z+\beta(x,x)\bigr).
\]
Substitution gives \eqref{eq:generaltwistedcoordinates}.  Now let
$n=(s,e)$.  Since
\[
  gn=\bigl(x+s,z+e+\beta(x,s)\bigr),
\]
the equality $g\sim_\phi gn$ forces $s=Bp$ for some $p\in V$.  Because
$B$ is injective under \eqref{eq:finiteRregime}, this $p$ is unique.  Comparing
the central coordinates in \eqref{eq:generaltwistedcoordinates} gives
\[
  e+\lambda(p)+\beta(Bp,Ap)+\omega(x,p)\in DZ.
\]
This holds for every $x\in V$ precisely when $p\in R_D(\omega)$ and
$e+\lambda(p)+\beta(Bp,Ap)\in DZ$, proving
\eqref{eq:invisiblesubgroupformula}.
\end{proof}

\begin{theorem}[Cohomological optimal-resolution theorem]
\label{thm:cohomologicalresolution}
Under \eqref{eq:finiteRregime}, define
\begin{equation}
  K_\phi
  :=\left\{
  (Bp,e):
  p\in R_D(\omega),\quad
  e+\lambda(p)+\beta(Bp,Ap)\in DZ
  \right\}.
  \label{eq:maximalresolvingkernel}
\end{equation}
Then $K_\phi=U_\phi$, and this subgroup is normal, finite-index, and satisfies
$\phi(K_\phi)\subseteq K_\phi$.  The quotient
$\Gamma_\beta/K_\phi$ separates all $\phi$-Reidemeister classes, and every
compatible resolving kernel $N$ satisfies
\[
  N\subseteq K_\phi.
\]
Consequently, $K_\phi$ is the unique maximal compatible resolving kernel and
\begin{equation}
  \boxed{
  \oc_R(\phi)
  =|\det(I-A)|\,|\det(I-C)|\,[V:R_D(\omega)].}
  \label{eq:cohomologicalocformula}
\end{equation}
Furthermore,
\begin{equation}
  R(\phi)=|\det(I-A)|\,|\det(I-C)|,
  \label{eq:centralextensionR}
\end{equation}
so
\begin{equation}
  \boxed{
  \frac{\oc_R(\phi)}{R(\phi)}=[V:R_D(\omega)].}
  \label{eq:cohomologicalpenalty}
\end{equation}
\end{theorem}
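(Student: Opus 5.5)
The plan is to reduce everything to Lemmas~\ref{lem:universalinvisible} and~\ref{lem:centralextensioncoordinates}.

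\textbf{Identification and maximality.} By Lemma~\ref{lem:centralextensioncoordinates}, the set $K_\phi$ defined in \eqref{eq:maximalresolvingkernel} coincides literally with the formula \eqref{eq:invisiblesubgroupformula} for $U_\phi$, so $K_\phi=U_\phi$. Lemma~\ref{lem:universalinvisible} then says that a compatible normal $N$ is resolving if and only if $N\subseteq U_\phi$. This gives the maximality statement at once, provided $U_\phi$ is itself an admissible observer.

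\textbf{Admissibility of $U_\phi$.} I would check this intrinsically from the definition $U_\psi=\{n: g\sim_\psi gn\ \forall g\}$.
\begin{itemize}[label=--]
\item \emph{Subgroup.} If $g\sim gn$ and $g\sim gm$ for all $g$, then applying the second relation with $gn$ in place of $g$ gives $gn\sim gnm$. Hence $g\sim gnm$. Inverses follow by substituting $g n^{-1}$ for $g$.
\item \emph{Normality.} For $k\in G$ we have $g(knk^{-1})=(gk)n k^{-1}$. Since $gk\sim gkn$, right multiplication by $k^{-1}$ needs care. Instead I will use the identity $g\sim_\phi \phi(k)^{-1}... $; more simply, I would verify normality directly in coordinates. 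The center $Z$ acts trivially, and conjugating $(Bp,e)$ by $(x,z)$ changes $e$ by $\omega(x,Bp)$. Because $p\in R_D(\omega)$ and $R_D$ is a subgroup, $\omega(x,Bp)=\omega(x,p)-\omega(x,Ap)$. The first term lies in $DZ$ by definition of $R_D(\omega)$, and the second lies in $DZ$ by Lemma~\ref{lem:radicalinvariance}.
\item \emph{$\phi$-invariance.} I would use $\phi(U_\phi)\subseteq U_\phi$ from $\phi(g)\sim\phi(g)\phi(n)$, which holds because $hg\phi(h)^{-1}\mapsto \phi(h)\phi(g)\phi(\phi(h))^{-1}$. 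This only handles $g$ in the image of $\phi$, so again I would instead verify in coordinates. One has $\phi(Bp,e)=(ABp,Ce+\lambda(Bp))=(B(Ap),\ldots)$ with $Ap\in R_D$. The central condition for $Ap$ then reduces, using \eqref{eq:cocyclecompatibility} and $C\equiv I$ mod $D$, to the condition for $p$. I expect this cocycle bookkeeping to be the main obstacle. I would organise it by noting that $\mu(p):=\lambda(p)+\beta(Bp,Ap)$ mod $DZ$ is exactly the correction making $p\mapsto (Bp,-\mu(p))$ a homomorphism $R_D\to \Gamma/DZ$-level data, so that $U_\phi$ is a graph. The required invariance is then $\mu(Ap)\equiv C\mu(p)+\lambda(Bp)$-type identities modulo $DZ$, which follow from \eqref{eq:cocyclecompatibility}.
\item \emph{Finite index.} $U_\phi$ contains $DZ$ and projects onto $B R_D(\omega)$.
\end{itemize}

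\textbf{Counting.} The map $U_\phi\to BR_D(\omega)$ has kernel $\{(0,e):e\in DZ\}=DZ$, and the fibres are cosets of $DZ$. Hence
\[
  [\Gamma_\beta:U_\phi]=[V:BR_D(\omega)]\,[Z:DZ]=|\det B|\,[V:R_D(\omega)]\,|\det D|,
\]
using injectivity of $B$. Since $U_\phi$ is the unique maximal resolving kernel, every resolving $N$ has index at least this, so \eqref{eq:cohomologicalocformula} follows.

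For $R(\phi)$, I would use \eqref{eq:generaltwistedcoordinates}. The base coordinate class lives in $V/BV$, which has $|\det B|$ elements. For a fixed base class, with $p$ ranging over $\ker B=0$ only, the central coordinate is determined modulo $DZ$, giving $|\det D|$ classes. This yields \eqref{eq:centralextensionR}, and dividing gives \eqref{eq:cohomologicalpenalty}.
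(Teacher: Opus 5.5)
Your plan follows the paper's proof essentially step for step:
\begin{itemize}
\item $K_\phi=U_\phi$ is read off from Lemma~\ref{lem:centralextensioncoordinates}.
\item Maximality and separation come from Lemma~\ref{lem:universalinvisible}.
\item The subgroup property uses the same intrinsic argument.
\item Normality uses $(x,z)(Bp,e)(x,z)^{-1}=\bigl(Bp,e+\omega(x,Bp)\bigr)$ together with Lemma~\ref{lem:radicalinvariance}.
\item The index is $[V:BR_D(\omega)]\,[Z:DZ]$.
\item $R(\phi)$ is counted horizontally, then centrally.
\end{itemize}

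The one step you leave unexecuted is $\phi$-invariance, and the congruence you sketch there has the wrong sign. With $\mu(p)=\lambda(p)+\beta(Bp,Ap)$, the identity that is needed, and true, is $\mu(Ap)\equiv C\mu(p)-\lambda(Bp)\pmod{DZ}$. Your version with $+\lambda(Bp)$ would additionally force $2\lambda(Bp)\in DZ$.

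The paper gets the correct identity from a single application of \eqref{eq:cocyclecompatibility} to the pair $x=Bp$, $y=Ap$, using $x+y=p$ and $ABp=BAp$:
\[
  \lambda(Bp)+\mu(Ap)=\lambda(p)+C\beta(Bp,Ap).
\]
Hence, for $(Bp,e)\in K_\phi$,
\[
  Ce+\lambda(Bp)+\mu(Ap)=C\bigl(e+\mu(p)\bigr)+D\lambda(p)\in DZ.
\]
Combined with $Ap\in R_D(\omega)$, this places $\phi(Bp,e)=\bigl(B(Ap),Ce+\lambda(Bp)\bigr)$ in $K_\phi$. With this line in place of your sketch, the argument is complete.
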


\begin{proof}
By Lemma~\ref{lem:centralextensioncoordinates}, $K_\phi=U_\phi$.  First note that this set is a subgroup.  If $n_1,n_2\in U_\phi$, then
$g\sim_\phi gn_1\sim_\phi gn_1n_2$ for every $g$; the inverse property follows
by applying the defining condition to $gn^{-1}$.  Since $B$ is injective, the
parameter $p$ in \eqref{eq:invisiblesubgroupformula} is unique and is additive
under multiplication.  Because $R_D(\omega)$ is a subgroup, it follows that
$K_\phi=U_\phi$ is a subgroup.

We next prove normality.  For $y=(v,a)$ and $n=(Bp,e)$, direct conjugation
gives
\[
  yny^{-1}=\bigl(Bp,e+\omega(v,Bp)\bigr).
\]
Since $Bp=p-Ap$ and Lemma~\ref{lem:radicalinvariance} gives $p,Ap\in R_D(\omega)$,
\[
  \omega(v,Bp)=\omega(v,p)-\omega(v,Ap)\in DZ.
\]
Thus conjugation preserves the defining congruence in
\eqref{eq:maximalresolvingkernel}, and $K_\phi\triangleleft\Gamma_\beta$.

For $\phi$-invariance, let $(Bp,e)\in K_\phi$.  Then
\[
  \phi(Bp,e)=\bigl(BAp,Ce+\lambda(Bp)\bigr),
\]
and $Ap\in R_D(\omega)$ by Lemma~\ref{lem:radicalinvariance}.  Applying
\eqref{eq:cocyclecompatibility} to $Bp$ and $Ap$, using
$Bp+Ap=p$ and $ABp=BAp$, yields
\begin{align*}
 &\lambda(Bp)+\lambda(Ap)+\beta(BAp,A^2p)\\
 &\hspace{4em}=C\lambda(p)+C\beta(Bp,Ap)+D\lambda(p).
\end{align*}
Hence
\begin{align*}
 &Ce+\lambda(Bp)+\lambda(Ap)+\beta(BAp,A^2p)\\
 &\hspace{4em}=C\bigl(e+\lambda(p)+\beta(Bp,Ap)\bigr)+D\lambda(p)
 \in DZ,
\end{align*}
because $C(DZ)\subseteq DZ$.  Therefore
$\phi(K_\phi)\subseteq K_\phi$.

Let $N$ be any compatible resolving kernel.  By
Lemma~\ref{lem:universalinvisible}, $N\subseteq U_\phi$.  Formula
\eqref{eq:invisiblesubgroupformula} therefore gives $N\subseteq K_\phi$.

Conversely, $K_\phi\subseteq U_\phi$, so
Lemma~\ref{lem:universalinvisible} shows that the quotient by $K_\phi$
separates all Reidemeister classes.  It remains only to compute its index.
The projection of $K_\phi$ to $V$ is $BR_D(\omega)$, while
\[
  K_\phi\cap Z=DZ.
\]
Therefore
\begin{align*}
  [\Gamma_\beta:K_\phi]
  &=[V:BR_D(\omega)]\,[Z:DZ]\\
  &=[V:BV]\,[V:R_D(\omega)]\,[Z:DZ]\\
  &=|\det B|\,|\det D|\,[V:R_D(\omega)],
\end{align*}
which proves \eqref{eq:cohomologicalocformula}.

Finally, every twisted class can first be reduced horizontally modulo $BV$
and then centrally modulo $DZ$.  If two representatives with the same chosen
horizontal coordinate are twisted conjugate, then $Bp=0$, hence $p=0$, and
their central coordinates differ by an element of $DZ$.  Thus the resulting
representatives are unique and
\[
  R(\phi)=[V:BV][Z:DZ]=|\det B|\,|\det D|.
\]
This proves \eqref{eq:centralextensionR} and
\eqref{eq:cohomologicalpenalty}.
\end{proof}

\begin{corollary}[Principal torus bundles]
\label{cor:torusbundleoc}
Let
\[
  \T^m\longrightarrow M_\omega\longrightarrow\T^n
\]
be the principal torus bundle associated with the central extension class
$[\omega]\in H^2(\T^n;\Z^m)$, realized by the corresponding two-step
nilmanifold, and let $F\colon M_\omega\to M_\omega$ be the map induced by
$\phi$.  Under \eqref{eq:finiteRregime}, all Reidemeister classes are essential
and
\begin{equation}
  \boxed{
  N(F)=|\det(I-A)|\,|\det(I-C)|,
  \qquad
  \oc(F)=N(F)[V:R_D(\omega)].}
  \label{eq:torusbundleoc}
\end{equation}
In particular, the excess finite-cover cost depends only on $A$, $C$, and the
reduction of the characteristic class $[\omega]$ modulo $(I-C)Z$; it is
independent of the chosen cocycle $\beta$, section, and correction term
$\lambda$.
\end{corollary}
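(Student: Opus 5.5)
The plan is to reduce the corollary to Theorem~\ref{thm:cohomologicalresolution} by identifying the topological data with the group-theoretic data. First, $M_\omega=\R^{n+m}/\Gamma_\beta$ (the two-step nilmanifold realizing the central extension) is a $K(\Gamma_\beta,1)$. The bundle map $F$ is induced by $\phi$ (after an inner change of twisting, which by Lemma~\ref{lem:innerchange} and Proposition~\ref{prop:homotopyinvariance} changes neither Reidemeister sets nor observer data). Hence $\RN(F)=\RN(\phi)$, and the finite observers of $F$ are exactly the compatible finite-index normal subgroups of $\Gamma_\beta$.

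Second, I would invoke the cited nilmanifold results \cite{DekimpeDugardein,FelLee,DeconinckDekimpe}. For nilmanifold maps, $N(F)=|L(F)|$ and $N(F)=R(F)$ whenever $R(F)<\infty$; equivalently, every Reidemeister class has index $\pm\det$ of the linearization, and here that linearization is $(I-A)\oplus(I-C)$, which is nonzero under \eqref{eq:finiteRregime}. So $E_F=\RN(\phi)$, and \eqref{eq:centralextensionR} gives $N(F)=|\det(I-A)|\,|\det(I-C)|$. Because the support is all of $\RN(\phi)$, injectivity of $(q_N)_R$ on $E_F$ is the same as injectivity on $\RN(\phi)$, so $\oc(F)=\oc_R(\phi)$. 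Formula \eqref{eq:cohomologicalocformula} then yields $\oc(F)=N(F)[V:R_D(\omega)]$.

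Third, for the independence claim, note that $R_D(\omega)$ is defined from $\omega$ modulo $DZ$ alone. The form $\omega$ is the commutator form of $\Gamma_\beta$ and is determined by the class $[\omega]\in\operatorname{Hom}(\Lambda^2V,Z)$. Changing the cocycle $\beta$ by a coboundary, or changing the section, alters $\beta$ by a symmetric or coboundary term and $\lambda$ correspondingly; neither affects $\omega$. The term $\lambda$ never enters $R_D(\omega)$, and $D=I-C$ depends only on $C$. Hence the penalty $[V:R_D(\omega)]$ depends only on $A$, $C$ (through the compatibility \eqref{eq:characteristiccompatibility}) and $\bar\omega_D$.

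The main obstacle is the essentiality step: making sure the cited averaging or linearization formula really applies to possibly non-invertible endomorphisms of this nilmanifold, and that each class's index is nonzero with a uniform sign. I would first try the product formula for fibrations over a torus with torus fibre. The fibre map restricted to a central coset is a toral map with linear part $C$, and the base map has linear part $A$. The Nielsen classes of $F$ then fibre over those of $f_A$, each with index $\operatorname{sgn}\det(I-A)\operatorname{sgn}\det(I-C)$.
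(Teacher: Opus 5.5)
Your proposal is correct and follows the paper's own route. The paper identifies $M_\omega$ as the aspherical nilmanifold of $\Gamma_\beta$ and uses the standard nilmanifold result $N(F)=R(\phi)$ in the finite-Reidemeister regime to conclude $E_F=\RN(\phi)$, hence $\oc(F)=\oc_R(\phi)$; it then reads off both formulas and the intrinsic character of $[V:R_D(\omega)]$ from Theorem~\ref{thm:cohomologicalresolution}. One slip: each class has index $\operatorname{sgn}\det(I-A)\operatorname{sgn}\det(I-C)$, not $\pm\det$, as you state correctly at the end. Your fibration fallback is also unnecessary, because the cited nilmanifold results (Anosov's $N=|L|$ and $N=R$ when $R<\infty$) hold for arbitrary self-maps, not only automorphisms.
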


\begin{proof}
The extension group is torsion-free, finitely generated, and two-step
nilpotent; its Mal'cev quotient is a nilmanifold principal torus bundle with
characteristic class $[\omega]$; see
\cite{DekimpeBook,PalaisStewart,Raghunathan}.  In the finite-Reidemeister
regime the standard nilmanifold Nielsen formula gives
$N(F)=R(\phi)$, so every Reidemeister class is essential.  The result follows
from Theorem~\ref{thm:cohomologicalresolution}.  Although the coordinate
formula for $K_\phi$ uses $\beta$ and $\lambda$, its index is the intrinsic
quantity in \eqref{eq:cohomologicalocformula}.
\end{proof}

\begin{corollary}[Scalar dilations of circle bundles over tori]
\label{cor:circlebundleformula}
Let $Z=\Z$, let $\omega\colon\Lambda^2\Z^n\to\Z$ be represented by an
integral skew-symmetric matrix $W$, and consider the compatible scalar maps
\[
  A=kI_n,
  \qquad
  C=k^2,
  \qquad k\ge2.
\]
Put $\Delta=k^2-1$.  Suppose the skew Smith normal form of $W$ is
\[
  W\sim d_1J\oplus\cdots\oplus d_sJ\oplus0_{n-2s},
  \qquad
  J=\begin{pmatrix}0&1\\-1&0\end{pmatrix},
\]
with $d_i>0$ and $d_i\mid d_{i+1}$.  Then
\begin{align}
  N(F_{k,\omega})
  &=(k-1)^n\Delta,
  \label{eq:circlebundleN}\\
  \oc(F_{k,\omega})
  &=(k-1)^n\Delta
  \prod_{i=1}^s
  \left(\frac{\Delta}{\gcd(\Delta,d_i)}\right)^2.
  \label{eq:circlebundleoc}
\end{align}
Thus
\[
  \boxed{
  \frac{\oc(F_{k,\omega})}{N(F_{k,\omega})}
  =\prod_{i=1}^s
  \left(\frac{\Delta}{\gcd(\Delta,d_i)}\right)^2.}
\]
\end{corollary}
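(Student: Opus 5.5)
The plan is to specialize Corollary~\ref{cor:torusbundleoc} (equivalently Theorem~\ref{thm:cohomologicalresolution}) to $V=\Z^n$, $Z=\Z$, $A=kI_n$, $C=k^2$. The whole problem then reduces to computing $|\det(I-A)|$, $|\det(I-C)|$ and the index $[V:R_D(\omega)]$.

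First I check the hypotheses. The compatibility condition \eqref{eq:characteristiccompatibility} reads $k^2\omega(x,y)=\omega(kx,ky)$, which holds automatically by bilinearity. A correction term $\lambda$ satisfying \eqref{eq:cocyclecompatibility} also exists. With the cocycle $\beta(x,y)=x^{T}Ly$ (for instance, $L$ the strictly upper-triangular part of $W$), one gets $\beta(kx,ky)=k^2\beta(x,y)$, so $\lambda=0$ works. Corollary~\ref{cor:torusbundleoc} notes that the index is independent of this choice anyway. The finite-Reidemeister regime \eqref{eq:finiteRregime} holds since $k\ge2$: $\det(I-kI_n)=(1-k)^n\neq0$ and $1-k^2\neq0$. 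This gives
\[
N(F_{k,\omega})=|1-k|^n\,|1-k^2|=(k-1)^n\Delta,
\]
which is \eqref{eq:circlebundleN}. Here $D=1-k^2$, so $DZ=\Delta\Z$.

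The remaining step is the radical index
\[
R_D(\omega)=\{p\in\Z^n:\ x^{T}Wp\equiv0\ (\mathrm{mod}\ \Delta)\ \forall x\}=\{p: Wp\in\Delta\Z^n\}.
\]
I will use the skew Smith normal form $W=P^{T}(d_1J\oplus\cdots\oplus d_sJ\oplus0)P$ with $P\in GL_n(\Z)$. Changing coordinates by the unimodular $P$ preserves the index, so I may assume $W$ is block-diagonal. The condition then splits block by block.

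On a block $d_iJ$, with $p=(p_1,p_2)$, the requirement is $d_ip_2\equiv0$ and $d_ip_1\equiv0\pmod\Delta$. Each of these holds exactly when $p_j\in(\Delta/\gcd(\Delta,d_i))\Z$. So each block contributes index $(\Delta/\gcd(\Delta,d_i))^2$, and the zero block contributes $1$. Multiplying gives
\[
[V:R_D(\omega)]=\prod_{i=1}^{s}\bigl(\Delta/\gcd(\Delta,d_i)\bigr)^2.
\]
Inserting this into \eqref{eq:torusbundleoc} gives \eqref{eq:circlebundleoc}, and dividing by $N(F_{k,\omega})$ gives the boxed ratio.

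The main point needing care is that the radical $\{p: Wp\in\Delta\Z^n\}$ transforms correctly under $W\mapsto P^{T}WP$. Since $P^{T}$ is invertible over $\Z$, the condition $Wp\in\Delta\Z^n$ is equivalent to $P^{T}WPp'\in\Delta\Z^n$ with $p=Pp'$. So $R_D$ is carried bijectively by $P$, and the index is preserved. The rest is routine.
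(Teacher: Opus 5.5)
Your proposal is correct and follows essentially the same route as the paper: specialize Corollary~\ref{cor:torusbundleoc}, identify $R_D(\omega)=\{p\in\Z^n : Wp\equiv 0 \pmod{\Delta}\}$, and compute its index block by block in skew Smith normal form. Your explicit checks go beyond the paper's proof: that $\lambda=0$ works for a bilinear cocycle, and that the finite-Reidemeister regime holds. You also verify that a unimodular congruence preserves the radical's index, which the paper leaves implicit.
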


\begin{proof}
The reduced radical is
\[
  R_D(\omega)
  =\{p\in\Z^n:Wp\equiv0\pmod\Delta\}.
\]
For a block $d_iJ$, the image modulo $\Delta$ has order
$(\Delta/\gcd(\Delta,d_i))^2$.  Therefore
\[
  [\Z^n:R_{I-C}(\omega)]
  =\prod_{i=1}^s
  \left(\frac{\Delta}{\gcd(\Delta,d_i)}\right)^2.
\]
Corollary~\ref{cor:torusbundleoc} gives the formulas.
\end{proof}

\subsection{Extremal and heterogeneous characteristic blocks}
\label{subsec:extremalblocks}

The preceding formula separates the contribution of each skew Smith block.
This gives an intrinsic zero-penalty criterion for arbitrary principal torus
bundles and exact extremal criteria in the scalar circle-bundle case.

\begin{corollary}[Zero and maximal resolution penalties]
\label{cor:penaltydichotomy}
Under the hypotheses of Corollary~\ref{cor:torusbundleoc},
\[
  \oc(F)=N(F)
  \quad\Longleftrightarrow\quad
  R_D(\omega)=V
  \quad\Longleftrightarrow\quad
  \omega(V,V)\subseteq DZ.
\]
Equivalently, the finite-cover resolution penalty vanishes exactly when the
reduced characteristic form
\(
  \bar\omega_D\colon\Lambda^2V\to Z/DZ
\)
is zero.

In the scalar circle-bundle setting of
Corollary~\ref{cor:circlebundleformula}, the block \(d_iJ\) contributes the
factor
\[
  \pi_i(\Delta)
  :=\left(\frac{\Delta}{\gcd(\Delta,d_i)}\right)^2
\]
to \(\oc(F_{k,\omega})/N(F_{k,\omega})\).  Consequently,
\begin{align*}
  \oc(F_{k,\omega})=N(F_{k,\omega})
  &\quad\Longleftrightarrow\quad
  \Delta\mid d_i\text{ for every }i,\\
  \frac{\oc(F_{k,\omega})}{N(F_{k,\omega})}=\Delta^{2s}
  &\quad\Longleftrightarrow\quad
  \gcd(\Delta,d_i)=1\text{ for every }i.
\end{align*}
Thus a block gives no excess cost precisely when it vanishes modulo
\(\Delta\), and it gives the maximal possible factor \(\Delta^2\) precisely
when it is invertible modulo \(\Delta\).  Different blocks may lie in
zero, maximal, or intermediate regimes within the same bundle.
\end{corollary}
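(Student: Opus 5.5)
The plan is to deduce everything from Corollary~\ref{cor:torusbundleoc} and Corollary~\ref{cor:circlebundleformula}; no new group theory is needed. For the general statement, Corollary~\ref{cor:torusbundleoc} gives $\oc(F)=N(F)\,[V:R_D(\omega)]$ with $N(F)=|\det B|\,|\det D|\ne0$ under \eqref{eq:finiteRregime}. Hence $\oc(F)=N(F)$ if and only if $[V:R_D(\omega)]=1$, that is, $R_D(\omega)=V$. By the definition \eqref{eq:reducedradical}, $R_D(\omega)=V$ means $\omega(V,p)\subseteq DZ$ for every $p\in V$. This is exactly $\omega(V,V)\subseteq DZ$, i.e.\ the vanishing of the reduced form $\bar\omega_D$. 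This settles the first chain of equivalences.

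For the scalar circle-bundle case, here $D=1-k^2=-\Delta$, so $DZ=\Delta\Z$. Corollary~\ref{cor:circlebundleformula} already writes the ratio $\oc/N$ as the product $\prod_i\pi_i(\Delta)$, one factor per skew Smith block. The point I would make explicit is why the radical index factorizes blockwise. I would choose a unimodular change of basis $P$ with $P^{T}WP$ in skew Smith form. The radical condition $Wp\equiv0\pmod\Delta$ is invariant under this change of basis, so $V/R_D(\omega)\cong \im(W\bmod\Delta)$. Computing this image block by block, the block $d_iJ$ contributes $(d_i\Z/\Delta\Z)^2$, which has order $(\Delta/\gcd(\Delta,d_i))^2$. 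The zero block contributes nothing.

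With the factorization in hand, each $\pi_i(\Delta)$ is a positive integer with $1\le\pi_i(\Delta)\le\Delta^2$. Therefore:
\begin{itemize}
\item the product equals $1$ if and only if every factor equals $1$, that is, $\gcd(\Delta,d_i)=\Delta$, i.e.\ $\Delta\mid d_i$;
\item the product equals $\Delta^{2s}$ if and only if every factor attains its maximum $\Delta^2$, i.e.\ $\gcd(\Delta,d_i)=1$.
\end{itemize}
The first condition says the block vanishes modulo $\Delta$. The second says $d_i$ is a unit modulo $\Delta$, so the block $d_iJ$ is invertible mod $\Delta$. These give the stated per-block interpretations.

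For the final sentence, that different blocks may sit in different regimes, a single example suffices. I would take $n=4$, $k=2$ (so $\Delta=3$), and $W=1\cdot J\oplus 3J$. This has divisibility $1\mid 3$, and the first block is maximal while the second contributes no penalty. Replacing $3J$ by $6J$ with $\Delta=9$, i.e.\ $k$ such that $k^2-1=9$, is impossible. So for an intermediate example I would instead use $k=3$, $\Delta=8$, and blocks $J\oplus2J$, giving factors $64$ and $16$.

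The only delicate step is the blockwise factorization. It needs the observation that the radical index is an invariant of the congruence class of $W$, which follows because $V/R_D(\omega)\cong\im(\bar\omega_D^\flat\colon V\to\operatorname{Hom}(V,Z/DZ))$ transforms equivariantly under $P$. I expect no real obstacle beyond stating this cleanly.
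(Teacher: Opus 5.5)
Your proposal is correct and follows essentially the same route as the paper. You read off $\oc(F)/N(F)=[V:R_D(\omega)]$ from Corollary~\ref{cor:torusbundleoc} and unwind the definition of the radical; then, since each block factor $\pi_i(\Delta)$ is an integer between $1$ and $\Delta^2$, the product equals $1$ or $\Delta^{2s}$ exactly when every factor is extremal. Your unimodular-congruence justification of the blockwise factorization and your explicit mixed-regime examples (the paper instead points to $W=J\oplus 8J$ with $k=3$) are sound additions, not a different argument.
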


\begin{proof}
The first equivalence follows immediately from
\[
  \frac{\oc(F)}{N(F)}=[V:R_D(\omega)]
\]
and the definition of \(R_D(\omega)\).  In the scalar case,
Corollary~\ref{cor:circlebundleformula} expresses the penalty as the product
of the factors \(\pi_i(\Delta)\).  Each factor equals \(1\) exactly when
\(\Delta\mid d_i\), and equals its maximum \(\Delta^2\) exactly when
\(d_i\) is a unit modulo \(\Delta\).  The stated criteria follow by taking
the product over the \(s\) nonzero skew blocks.
\end{proof}

\begin{example}[A heterogeneous non-Heisenberg bundle]
\label{ex:heterogeneousbundle}
Let \(n=4\), let the characteristic form have skew Smith matrix
\[
  W=J\oplus 8J,
  \qquad
  J=\begin{pmatrix}0&1\\-1&0\end{pmatrix},
\]
and take the scalar dilation \(k=3\).  Then \(\Delta=k^2-1=8\), and
\[
  R_{I-C}(\omega)
  =\ker\bigl(W\colon\Z^4\to(\Z/8\Z)^4\bigr)
  =8\Z^2\oplus\Z^2.
\]
Hence
\[
  [\Z^4:R_{I-C}(\omega)]=8^2=64.
\]
The first block \(J\) is invertible modulo \(8\) and contributes the maximal
factor \(8^2\), whereas the integral block \(8J\) is zero modulo \(8\) and
contributes no excess cost.  Since
\[
  N(F_{3,\omega})=(3-1)^4(3^2-1)=2^4\cdot8=128,
\]
Corollary~\ref{cor:circlebundleformula} gives
\[
  \boxed{\oc(F_{3,\omega})=128\cdot64=8192.}
\]
Thus the characteristic class is neither globally invisible nor uniformly
nondegenerate modulo the central Reidemeister lattice: its two symplectic
blocks occupy opposite extremal regimes.  In particular, the general
cohomological theorem is not tailored to the unimodular Heisenberg form.
\end{example}

\begin{remark}[Topological meaning of the penalty]
\label{rem:topologicalpenalty}
For the trivial bundle $[\omega]=0$, one has
$R_D(\omega)=V$ and hence $\oc(F)=N(F)$.  For a nontrivial bundle, the
quotient $V/R_D(\omega)$ measures the part of the degree-two characteristic
class that remains nondegenerate after reduction modulo the central
Reidemeister lattice.  Thus the factor $[V:R_D(\omega)]$ is not an
additional linear fixed-point count: it is the exact finite-cover price of the
bundle's central twisting.
\end{remark}

\section{Exact nonabelian resolution on Heisenberg nilmanifolds}
\label{sec:heisenberg}

We now specialize the cohomological resolution theorem to its basic
nonabelian symplectic model.  The Heisenberg family makes the characteristic
class explicit and produces an unbounded gap between the Nielsen number and
the finite-cover resolution cost.
For $r\ge1$, write the integral Heisenberg group as
\[
  \Heis_r(\Z)=\Z^r\times\Z^r\times\Z
\]
with multiplication
\[
  (u,v,c)(u',v',c')
  =\bigl(u+u',v+v',c+c'-u'\!\cdot v\bigr).
\]
Its center is $\{(0,0,c):c\in\Z\}$, and
$\Heis_r(\Z)$ is a lattice in the real Heisenberg group.  Let
\[
  M_r=\Heis_r(\Z)\backslash\Heis_r(\R)
\]
be the corresponding compact nilmanifold.

For an integer $k\ge2$, consider the scalar Heisenberg dilation
\[
  \phi_{k,r}(u,v,c)=(ku,kv,k^2c).
\]
It preserves the lattice and therefore induces a self-cover
\[
  F_{k,r}\colon M_r\longrightarrow M_r.
\]
Set
\[
  d=k-1,
  \qquad
  D=k^2-1,
\]
and let
\[
  \Phi_{k,r}=\operatorname{diag}(kI_{2r},k^2)
\]
denote the induced endomorphism of the Heisenberg Lie algebra.

\begin{lemma}[Twisted coordinates and canonical classes]
\label{lem:heisclasses}
For $h=(p,q,w)$ and $g=(a,b,c)$ in $\Heis_r(\Z)$,
\[
 h g\phi_{k,r}(h)^{-1}
 =\bigl(a-dp,b-dq,
 c-a\!\cdot q+k b\!\cdot p-kd\,p\!\cdot q-Dw\bigr).
\]
Every $\phi_{k,r}$-Reidemeister class has a unique representative
\[
  (a,b,c),
  \qquad
  0\le a_i,b_i<d,
  \quad
  0\le c<D.
\]
Consequently,
\[
  R(\phi_{k,r})=d^{2r}D.
\]
\end{lemma}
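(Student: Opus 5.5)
The plan is a direct computation in the coordinates $(u,v,c)$, followed by a normal-form reduction that mirrors the counting step at the end of the proof of Theorem~\ref{thm:cohomologicalresolution}.

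\textbf{Step 1: the twisted-conjugation formula.} First compute the inverse in $\Heis_r(\Z)$. Solving $(u,v,c)(u',v',c')=(0,0,0)$ gives
\[
  (u,v,c)^{-1}=(-u,-v,-c-u\!\cdot v).
\]
Then $\phi_{k,r}(h)^{-1}=(-kp,-kq,-k^2w-k^2\,p\!\cdot q)$. Next multiply out $hg=(p+a,\,q+b,\,w+c-a\!\cdot q)$ and right-multiply by $\phi_{k,r}(h)^{-1}$. The central coordinate becomes
\[
  w+c-a\!\cdot q-k^2w-k^2p\!\cdot q+kp\!\cdot(q+b).
\]
Collecting terms gives $c-a\!\cdot q+k\,b\!\cdot p-(k^2-k)\,p\!\cdot q-Dw$, and $k^2-k=kd$. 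The horizontal coordinates are $a+p-kp=a-dp$ and $b-dq$. This is purely mechanical; the only real risk is a sign error in the inverse or in the cocycle $-u'\!\cdot v$, so I would double-check the result by specializing to $h=(0,0,w)$ and to $h=(p,0,0)$ separately.

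\textbf{Step 2: existence of a canonical representative.} Given $g=(a,b,c)$, choose $p,q\in\Z^r$ so that each coordinate of $a-dp$ and $b-dq$ lies in $[0,d)$; this is componentwise division with remainder. Then apply the formula with $h=(0,0,w)$. In that case all bilinear terms vanish and the central coordinate changes by $-Dw$, so $c$ can be moved into $[0,D)$ without disturbing the horizontal part.

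\textbf{Step 3: uniqueness.} Suppose two such reduced representatives are twisted conjugate via $h=(p,q,w)$. Then $a'-a=-dp$ with every $|a'_i-a_i|<d$, which forces $p=0$; likewise $q=0$. The formula then reduces to $c'=c-Dw$ with $0\le c,c'<D$, so $w=0$ and $c'=c$.

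Counting the box of representatives gives $R(\phi_{k,r})=d^{2r}D$. This agrees with \eqref{eq:centralextensionR}, since $|\det(I-kI_{2r})|=d^{2r}$ and $|1-k^2|=D$. That agreement is a consistency check rather than a needed input, because Step 3 already establishes the bijection directly.

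The only step with any real difficulty is getting the signs in Step 1 right; Steps 2 and 3 are routine once the formula is in hand.
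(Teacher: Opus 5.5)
Your proposal is correct and follows the paper's proof essentially step for step: the same inverse formula $(u,v,c)^{-1}=(-u,-v,-c-u\cdot v)$ followed by direct substitution, then reduction of the horizontal coordinates modulo $d$ and of the central coordinate modulo $D$ via $h=(0,0,w)$. Uniqueness is obtained, as in the paper, by first forcing $p=q=0$ and then $w=0$, and your explicit expansion of the central coordinate to $c-a\cdot q+k\,b\cdot p-kd\,p\cdot q-Dw$ checks out.
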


\begin{proof}
The inverse in the chosen coordinates is
\[
  (u,v,c)^{-1}=(-u,-v,-c-u\!\cdot v),
\]
and direct substitution gives the displayed twisted-conjugation formula.
The first two coordinates may be reduced independently modulo $d$, after
which the last coordinate may be reduced modulo $D$.  If two representatives
in the indicated ranges are twisted conjugate, the first two coordinates
force $p=q=0$, and the last coordinate then forces equality modulo $D$.
The chosen ranges therefore give existence and uniqueness.
\end{proof}

The standard Nielsen formula for affine maps on nilmanifolds gives
\[
  N(F_{k,r})
  =\left|\det\bigl(I-\Phi_{k,r}\bigr)\right|
  =d^{2r}D;
\]
see, for example, \cite{DeconinckDekimpe}.  Thus all Reidemeister classes in
Lemma~\ref{lem:heisclasses} are essential.

\begin{theorem}[Exact Heisenberg observer complexity]
\label{thm:heisexact}
For every $r\ge1$ and $k\ge2$,
\[
  \boxed{\oc(F_{k,r})=d^{2r}D^{2r+1}.}
\]
Equivalently,
\[
  \frac{\oc(F_{k,r})}{N(F_{k,r})}=D^{2r}=(k^2-1)^{2r}.
\]
Moreover, $\dc(F_{k,r})=1$.
\end{theorem}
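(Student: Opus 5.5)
The plan is to realize $\Heis_r(\Z)$ as a central extension $\Gamma_\beta$ in the sense of Section~\ref{sec:cohomological}, and then read off the answer from Theorem~\ref{thm:cohomologicalresolution} and Corollary~\ref{cor:torusbundleoc}.

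Take $V=\Z^{2r}$ with coordinates $(u,v)$ and $Z=\Z$. The multiplication $(u,v,c)(u',v',c')=(u+u',v+v',c+c'-u'\cdot v)$ is exactly the cocycle presentation with $\beta((u,v),(u',v'))=-u'\cdot v$. The commutator form is then
\[
  \omega((u,v),(u',v'))=-u'\cdot v+u\cdot v'.
\]
Its matrix $W$ is $\pm$ the standard symplectic form, so it is unimodular, with skew Smith form $J^{\oplus r}$, that is, $s=r$ and every $d_i=1$. The dilation has $A=kI_{2r}$, $C=k^2$, and $\lambda=0$. Condition \eqref{eq:cocyclecompatibility} holds because $\beta$ is bilinear: $\beta(kx,ky)=k^2\beta(x,y)$. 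Moreover $\det(I-A)=(1-k)^{2r}\ne0$ and $I-C=1-k^2\ne0$, so \eqref{eq:finiteRregime} is satisfied.

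With these identifications, Corollary~\ref{cor:circlebundleformula} applies with $n=2r$ and $\Delta=D$. Since $\gcd(D,1)=1$, each of the $r$ blocks contributes the factor $D^2$. This gives
\[
  \oc(F_{k,r})=d^{2r}D\cdot D^{2r}=d^{2r}D^{2r+1}.
\]
One can also see this directly: $R_D(\omega)=\{p:Wp\equiv0 \bmod D\}=D\Z^{2r}$ because $W$ is invertible over $\Z$, so $[V:R_D(\omega)]=D^{2r}$. For the ratio, divide by $N(F_{k,r})=d^{2r}D$, which was recorded just before the theorem (equivalently, by $R(\phi_{k,r})$ from Lemma~\ref{lem:heisclasses}). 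As a consistency check, the horizontal reduction modulo $d$ agrees with Lemma~\ref{lem:heisclasses}.

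For $\dc(F_{k,r})=1$, Proposition~\ref{prop:threelevels}(v) reduces the claim to showing $L(F_{k,r})\ne0$. For a nilmanifold map, $L=\det(I-\Phi_{k,r})=(1-k)^{2r}(1-k^2)$, which is nonzero since $k\ge2$. Alternatively, all Reidemeister classes carry the same index sign $\operatorname{sgn}\det(I-\Phi_{k,r})$, so the trivial observer sees $\pm d^{2r}D\ne0$.

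I expect the main obstacle to be bookkeeping rather than anything conceptual. First, the sign and coordinate conventions must be matched so that $\phi_{k,r}$ really takes the form $\phi(x,z)=(Ax,Cz+\lambda(x))$ with $\lambda=0$. Second, one must confirm that the identification of $\oc_R$ with $\oc$, which relies on all classes being essential, holds for this non-surjective self-cover. Both points appear to follow from the cited affine nilmanifold Nielsen formula.
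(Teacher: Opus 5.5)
Your proof is correct and follows the paper's own argument: the paper makes the same identification ($\beta((u,v),(u',v'))=-u'\cdot v$, $A=kI_{2r}$, $C=k^2$, $\lambda=0$), uses unimodularity of $\omega$ to get $R_D(\omega)=D\Z^{2r}$, applies Corollary~\ref{cor:torusbundleoc}, and deduces $\dc(F_{k,r})=1$ from $L(F_{k,r})=\det(I-\Phi_{k,r})\ne0$ via Proposition~\ref{prop:threelevels}. Going through Corollary~\ref{cor:circlebundleformula} with all $d_i=1$ is only a repackaging of the same radical-index computation.
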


\begin{proof}
The Heisenberg extension has
\[
  V=\Z^{2r},
  \qquad
  Z=\Z,
\]
and may be written in the form of Section~\ref{sec:cohomological} with
\[
  \beta\bigl((u,v),(u',v')\bigr)=-u'\!\cdot v.
\]
Its commutator form is the standard unimodular symplectic form
\[
  \omega\bigl((u,v),(u',v')\bigr)
  =u\!\cdot v'-u'\!\cdot v.
\]
For the scalar dilation,
\[
  A=kI_{2r},
  \qquad
  C=k^2,
  \qquad
  \lambda=0.
\]
Since
\[
  (I-C)Z=(1-k^2)\Z=(k^2-1)\Z=D\Z
\]
as subgroups of $\Z$, and since $\omega$ is unimodular,
\[
  R_D(\omega)=D\Z^{2r}.
\]
Consequently,
\[
  [V:R_D(\omega)]=D^{2r}.
\]
Corollary~\ref{cor:torusbundleoc} therefore gives
\[
  \oc(F_{k,r})
  =d^{2r}D\cdot D^{2r}
  =d^{2r}D^{2r+1}.
\]
The unique maximal resolving kernel from
Theorem~\ref{thm:cohomologicalresolution} is, in the coordinates of
Lemma~\ref{lem:heisclasses},
\[
  K_{k,r}=dD\Z^r\times dD\Z^r\times D\Z,
\]
which recovers the sharp congruence cover directly.  Finally,
$L(F_{k,r})=\det(I-\Phi_{k,r})\ne0$, so
Proposition~\ref{prop:threelevels} gives $\dc(F_{k,r})=1$.
\end{proof}

\begin{theorem}[All-or-nothing visibility for $k=2$]
\label{thm:heisprofilek2}
For every $r\ge1$, the complete visibility and index-mass profiles of the
scalar Heisenberg dilation $F_{2,r}$ are
\[
  \boxed{
  \VN_{F_{2,r}}(B)=
  \begin{cases}
    1,&B<3^{2r+1},\\[1mm]
    3,&B\ge3^{2r+1},
  \end{cases}}
\]
and
\[
  \MN_{F_{2,r}}(B)=3
  \qquad(B\ge1).
\]
Thus no finite observer sees exactly two of the three essential Nielsen
classes: the profile jumps directly from one visible atom to full resolution.
In particular,
\[
  \dc(F_{2,r})=\cfc(F_{2,r})=1,
  \qquad
  \oc(F_{2,r})=3^{2r+1}.
\]
\end{theorem}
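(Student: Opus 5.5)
The plan is to reduce everything to the structure of the three essential classes and a translation-invariance argument in the centre. For $k=2$ one has $d=k-1=1$ and $D=k^2-1=3$. By Lemma~\ref{lem:heisclasses}, every $\phi_{2,r}$-Reidemeister class therefore has a unique representative $(0,0,c)$ with $c\in\{0,1,2\}$. So $R(\phi_{2,r})=3$, and by the Nielsen formula quoted before Theorem~\ref{thm:heisexact} all three classes are essential. Each fixed-point class of this affine map has index $\operatorname{sgn}\det(I-\Phi_{2,r})$, which is the same for all three. Consequently no observer can cancel coefficients, which gives $\MN_{F_{2,r}}(B)=3$ for all $B$ and hence $\cfc=1$. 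Also $L\ne0$, so $\dc=1$ by Proposition~\ref{prop:threelevels}.

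The key step is to show that every finite observer $N$ either merges all three classes or separates all three. Write $z=(0,0,1)$, which is central, and let $\mathcal E_N$ denote the equivalence relation on $\Z/3$ given by $c_1\,\mathcal E_N\,c_2$ iff $q_N(z^{c_1})\sim_{\bar\phi_N}q_N(z^{c_2})$.

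I will show that $\mathcal E_N$ is invariant under translation $c\mapsto c+1$. Indeed, $q_N(z)$ is central in $Q_N$. Hence, if $x\,q_N(z^{c_1})\,\bar\phi_N(x)^{-1}=q_N(z^{c_2})$, then multiplying on the right by $q_N(z)$ gives
\[
  x\,q_N(z^{c_1+1})\,\bar\phi_N(x)^{-1}=q_N(z^{c_2+1}).
\]
A translation-invariant equivalence relation on the group $\Z/3$ is the coset relation of a subgroup. This step uses only that the representatives are central, which is what makes $k=2$ special. Since $3$ is prime, that subgroup is either trivial or everything.

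It follows that $V_N(F_{2,r})\in\{1,3\}$ for every observer. The value $1$ occurs when all three classes are merged, since their coefficients share a sign and so sum to $\pm3\neq0$. Proposition~\ref{prop:basic}(iii),(iv) then gives $\VN_{F_{2,r}}(B)=3$ exactly when $B\ge\oc(F_{2,r})$, and $\VN_{F_{2,r}}(B)=1$ otherwise. The trivial observer realizes the value $1$ for small $B$.

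Finally, Theorem~\ref{thm:heisexact} with $d=1$, $D=3$ gives $\oc(F_{2,r})=3^{2r+1}$. This locates the jump and completes the profile. I expect no real obstacle; the only point needing care is the centrality/translation argument, and the remaining claims follow from earlier results.
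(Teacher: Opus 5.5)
Your proof is correct. Its overall architecture matches the paper's: no cancellation because all three indices equal $-1$, a dichotomy $V_N\in\{1,3\}$ for every observer, and the location of the jump imported from Theorem~\ref{thm:heisexact}.

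The difference lies in the dichotomy step. The paper uses the twisted-conjugation formula of Lemma~\ref{lem:heisclasses} to identify the merging relation explicitly. It proves that $q(z^c)\sim_{\bar\phi}q(z^{c'})$ if and only if $c-c'\in T_N=\langle 3,\ e-u\cdot v:(u,v,e)\in N\rangle$, and then reads off $[\Z:T_N]\in\{1,3\}$. You instead observe that centrality of $q_N(z)$ makes the merging relation translation invariant on $\Z/3$, hence a coset relation, and conclude by primality of $3$.

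Your route is shorter and coordinate-free. The paper itself invokes the same translation invariance, but only in passing, to show that the realizable central differences form a subgroup. The paper's explicit $T_N$ buys extra information: a concrete test, in terms of the elements of $N$, for whether a given observer resolves the three classes, namely $e\equiv u\cdot v \pmod 3$ for all $(u,v,e)\in N$.

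One small point you leave implicit is that $\mathcal E_N$ is well defined on $\Z/3$, i.e.\ that $q_N(z^c)$ and $q_N(z^{c+3})$ are twisted conjugate. This follows from $z^{c+3}\sim_{\phi_{2,r}}z^c$ in $\Heis_r(\Z)$, realized by $h=(0,0,1)$.
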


\begin{proof}
Write $\Gamma=\Heis_r(\Z)$ and let
$N\triangleleft\Gamma$ be any finite observer, with quotient map
$q\colon\Gamma\twoheadrightarrow Q$ and induced endomorphism $\bar\phi$.
For $k=2$, Lemma~\ref{lem:heisclasses} gives exactly the three classes
represented by
\[
  z^c:=(0,0,c),\qquad c\in\Z/3\Z.
\]
Since $R(\phi_{2,r})=N(F_{2,r})=3$ and every fixed point is
nondegenerate with local index
$\operatorname{sgn}\det(I-\Phi_{2,r})=-1$, each Reidemeister class
contains exactly one fixed point and all three coefficients of the
Reidemeister trace are $-1$.  The map
$\RN(\phi_{2,r})\to\RN(\bar\phi)$ is surjective, and every quotient class
therefore receives a strictly negative coefficient.  Hence there is no
cancellation and
\[
  V_N(F_{2,r})=R(\bar\phi),
  \qquad
  M_N(F_{2,r})=3.
\]

It remains to determine how the three central representatives merge.  Put
\[
  T_N:=\left\langle
  3,\ e-u\!\cdot v:\ (u,v,e)\in N
  \right\rangle\le\Z.
\]
We claim that
\[
  q(z^c)\sim_{\bar\phi}q(z^{c'})
  \quad\Longleftrightarrow\quad
  c-c'\in T_N.
  \tag{\(*\)}
\]
Indeed, for $h=(p,s,w)$ the twisted-conjugation formula gives
\[
  h z^c\phi_{2,r}(h)^{-1}z^{-c'}
  =(-p,-s,c-c'-2p\!\cdot s-3w).
\]
Thus a single twisted conjugacy in the quotient produces, after writing
$(u,v,e)=(-p,-s,c-c'-2p\!\cdot s-3w)\in N$, the congruence
\[
  c-c'\equiv e-u\!\cdot v\pmod3.
\]
This proves the forward implication in $(*)$.  Conversely, for every
$(u,v,e)\in N$ and every $w\in\Z$, choosing $h=(-u,-v,w)$ realizes the
difference
\[
  e+2u\!\cdot v+3w.
\]
Taking $w=-u\!\cdot v$ realizes the generator $e-u\!\cdot v$, while
taking $(u,v,e)=(0,0,0)$ and $w=1$ realizes the difference $3$.  Finally,
the set of realizable central differences is a subgroup of $\Z$: the
relation is translation invariant on the central representatives, and
symmetry and transitivity give inverses and sums.  It therefore contains
$T_N$, while the forward implication gives the reverse inclusion.  This
proves $(*)$.

Consequently the quotient Reidemeister classes are parametrized by
$\Z/T_N$, and
\[
  V_N(F_{2,r})=[\Z:T_N].
\]
Since $3\Z\subseteq T_N$, this index is either $1$ or $3$.  It is equal to
$3$ exactly when the observer resolves all three original classes.  By
Theorem~\ref{thm:heisexact}, the minimum degree of such an observer is
$3^{2r+1}$.  Below that degree every observer therefore has visible support
one, while the canonical resolving observer attains three at the threshold.
This proves the two profiles and all stated complexities.
\end{proof}

\begin{proposition}[A nondivisorial Heisenberg visibility value]
\label{prop:heisfive}
For the three-dimensional dilation $F_{3,1}$,
\[
  \boxed{\VN_{F_{3,1}}(8)=5.}
\]
Since $N(F_{3,1})=32$, the visible class count need not divide the Nielsen
number.  In particular, nonabelian Heisenberg profiles are not toral divisor
staircases.
\end{proposition}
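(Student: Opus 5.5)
The strategy is to prove matching bounds. The lower bound comes from one explicit observer of order $8$ on which $\bar\phi$ becomes the identity. The upper bound combines a Burnside-type count with the short list of groups of order at most $8$. Here $k=3$ and $r=1$, so $d=2$ and $D=8$. By Lemma~\ref{lem:heisclasses}, $R(\phi_{3,1})=32=N(F_{3,1})$. Every fixed point is nondegenerate with index $\operatorname{sgn}\det(I-\Phi_{3,1})=\operatorname{sgn}\bigl((-2)^2(-8)\bigr)=-1$. All coefficients of $\RT(F_{3,1})$ therefore have the same sign, so no cancellation can occur in any quotient. Moreover $(q_N)_R$ is surjective onto $\RN(\bar\phi_N)$, exactly as in the proof of Theorem~\ref{thm:heisprofilek2}. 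Hence $V_N(F_{3,1})=R(\bar\phi_N)=\#\RN(\bar\phi_N)$ for every observer $N$.

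\emph{Lower bound.} Take $N=2\Z\times2\Z\times2\Z\subset\Heis_1(\Z)$. The product formula shows that $N$ is a subgroup, because the correction term $-u'v$ is even. Conjugation changes the central coordinate by $uv'-u'v$, which is even on $N$, so $N$ is normal. It is $\phi_{3,1}$-invariant, and $[\Gamma:N]=8$. The quotient $Q$ is the mod-$2$ Heisenberg group, which is nonabelian of order $8$ (dihedral). Since $3$ and $9$ are odd, $\bar\phi_N$ is the identity on $Q$. Its Reidemeister classes are then the ordinary conjugacy classes of $Q$, of which there are $5$. This gives $\VN_{F_{3,1}}(8)\ge5$.

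\emph{Upper bound.} Let $\psi$ be any endomorphism of a finite group $Q$. I will show $R(\psi)\le k(Q)$, the number of conjugacy classes. The twisted action $g\cdot x=gx\psi(g)^{-1}$ has its Reidemeister classes as orbits, so Burnside's lemma gives
\[
  R(\psi)=\frac1{|Q|}\sum_{g\in Q}\#\{x: x^{-1}gx=\psi(g)\}.
\]
For each $g$, the set $\{x: x^{-1}gx=\psi(g)\}$ is either empty or a coset of the centralizer $C_Q(g)$. Hence
\[
  R(\psi)\le\frac1{|Q|}\sum_{g}|C_Q(g)|=k(Q).
\]
Now let $Q$ be a compatible quotient with $|Q|\le8$.

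If $Q$ is nonabelian, it is $S_3$, $D_4$ or $Q_8$, with $k(Q)=3,5,5$ respectively. In each case $R(\bar\phi_N)\le5$.

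If $Q$ is abelian, then $q_N$ factors through $\Gamma^{ab}=\Z^2$, on which $\phi$ induces $3I$. The twisted classes of $Q$ are $Q/(I-\bar\phi_N)Q$. The image of $(I-3I)\Z^2=2\Z^2$ lies in $(I-\bar\phi_N)Q$, so $\RN(\bar\phi_N)$ is a quotient of $\Z^2/2\Z^2$. It therefore has at most $4$ elements.

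Together these give $\VN_{F_{3,1}}(8)\le5$. Finally, $5\nmid32$ yields the nondivisibility remark.

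The main obstacle is the upper bound. Enumerating observers directly would be tedious. The inequality $R(\psi)\le k(Q)$, valid for endomorphisms and not only automorphisms, avoids this and reduces the problem to the small classification of groups of order at most $8$. That classification is routine.
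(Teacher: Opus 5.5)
Your proof is correct. The reduction $V_N(F_{3,1})=R(\bar\phi_N)$ matches the paper's argument, and so do the lower-bound observer $2\Z\times2\Z\times2\Z$ with dihedral quotient of order eight and the abelian case (twisted classes are cosets of $2Q$, hence at most $|\Z^2/2\Z^2|=4$).

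You differ from the paper only in the nonabelian upper bound.

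\emph{The paper's route.} It uses that a compatible quotient is nilpotent of class at most two. This forces $|Q|=8$ and $Q\cong D_8$ or $Q_8$. It then checks on generating pairs that the induced map, which cubes the horizontal generators, is an inner automorphism. Hence $R(\bar\phi_N)$ equals the number of conjugacy classes, namely $5$.

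\emph{Your route.} You prove the general Burnside-type inequality $R(\psi)\le k(Q)$ for an arbitrary endomorphism $\psi$ of a finite group $Q$. This removes any need to identify $\bar\phi_N$ on a nonabelian quotient. It also lets you bound $S_3$ harmlessly (by $3$) rather than exclude it by nilpotency.

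\emph{What each buys.} Your lemma is more uniform and reusable: it bounds every nonabelian observer without knowing the induced map, and it is valid beyond automorphisms. The paper's case analysis yields slightly more: every nonabelian compatible quotient of order eight gives exactly five visible classes, not merely at most five.

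Your inequality is vacuous in the abelian case, since there $k(Q)=|Q|$ can be $8$. So the separate $2Q$ argument you supply is genuinely necessary, exactly as in the paper.
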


\begin{proof}
As above, all Reidemeister classes have the same index sign, and the quotient
map on Reidemeister classes is surjective.  Thus, for every finite observer
$N$ with quotient $Q$,
\[
  V_N(F_{3,1})=R(\bar\phi_N).
\]

For the lower bound, take
\[
  N_{2,2}=2\Z\times2\Z\times2\Z\triangleleft\Heis_1(\Z).
\]
The group law and commutator formula show that $N_{2,2}$ is normal; it is
$\phi_{3,1}$-invariant and has index $8$.  The quotient is the dihedral
group $D_8$ of order eight: the images of the two horizontal generators are
involutions and their product has order four.  Since $3\equiv1\pmod2$ and
$9\equiv1\pmod2$, the induced endomorphism is the identity.  The group $D_8$
has five conjugacy classes, so
\[
  \VN_{F_{3,1}}(8)\ge5.
\]

For the reverse inequality, let $Q$ be any compatible quotient of order at
most eight.  It is a two-generated nilpotent group of class at most two.  If
$Q$ is abelian, then the central generator $z=[x,y]$ maps to the identity.
Thus $Q$ is generated by the images of the two horizontal generators and
$\bar\phi_N$ acts as multiplication by $3$.  Twisted-conjugacy classes are
therefore the cosets of $(1-\bar\phi_N)Q=2Q$, and
\[
  R(\bar\phi_N)=|Q/2Q|\le4,
\]
because $Q$ is generated by at most two elements.  If $Q$ is nonabelian,
then necessarily $|Q|=8$ and $Q\cong D_8$ or $Q_8$.  For a generating pair
of $D_8$, either both generators are reflections, in which case the cube map
fixes them, or one is a rotation and the other a reflection, in which case
the cube map is conjugation by that reflection.  For a generating pair
$x,y$ of $Q_8$, conjugation by $xy$ sends both $x$ and $y$ to their inverses,
which are their cubes.  Hence in either nonabelian case $\bar\phi_N$ is an
inner automorphism.  Twisted conjugacy for an inner automorphism is in
bijection with ordinary conjugacy, and both $D_8$ and $Q_8$ have five
conjugacy classes.  Therefore $R(\bar\phi_N)\le5$ for every observer of order
at most eight, proving equality.
\end{proof}

\begin{corollary}[Exact periodic resolution]
\label{cor:periodicresolution}
For every $n\ge1$,
\[
  \oc(F_{k,r}^n)
  =(k^n-1)^{2r}(k^{2n}-1)^{2r+1}.
\]
The minimum resolving cover is the congruence cover associated with
\[
  K_{k^n,r}
  =(k^n-1)(k^{2n}-1)\Z^r
  \times (k^n-1)(k^{2n}-1)\Z^r
  \times (k^{2n}-1)\Z.
\]
\end{corollary}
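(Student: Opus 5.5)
The plan is to reduce the statement to Theorem~\ref{thm:heisexact} by recognizing that the $n$-th iterate of $F_{k,r}$ is again a scalar Heisenberg dilation. The map $F_{k,r}$ is induced by the endomorphism $\phi_{k,r}(u,v,c)=(ku,kv,k^2c)$. Composing this endomorphism with itself $n$ times gives
\[
  \phi_{k,r}^n(u,v,c)=(k^nu,k^nv,k^{2n}c)=\phi_{k^n,r}(u,v,c),
\]
and this is again a group endomorphism, since $\phi_{k^n,r}$ preserves the multiplication $c+c'-u'\!\cdot v$ exactly as $\phi_{k,r}$ does. It follows that $F_{k,r}^n=F_{k^n,r}$ as self-maps of $M_r$, with the basepoint and lift inherited from the chosen lift of $F_{k,r}$. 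By Proposition~\ref{prop:homotopyinvariance}, observer complexity depends only on this map, so no relabeling issues arise.

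Because $k\ge2$, we also have $k^n\ge2$, so Theorem~\ref{thm:heisexact} applies with $k$ replaced by $k^n$. This substitution turns $d$ into $k^n-1$ and $D$ into $k^{2n}-1$. We get
\[
  \oc(F_{k,r}^n)=(k^n-1)^{2r}(k^{2n}-1)^{2r+1}.
\]
The finite-Reidemeister hypothesis \eqref{eq:finiteRregime} holds because $\det(I-k^nI)\ne0$ and $1-k^{2n}\ne0$. The essentiality of all Reidemeister classes also persists, since $N(F_{k^n,r})=|\det(I-\Phi_{k^n,r})|=R(\phi_{k^n,r})$.

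For the identification of the minimum resolving cover, the plan is to use the statement in the proof of Theorem~\ref{thm:heisexact} that the unique maximal resolving kernel is $K_{k,r}=dD\Z^r\times dD\Z^r\times D\Z$, again with $k$ replaced by $k^n$. This is the step I would check most carefully, since it is the only one that is more than substitution. In the general description \eqref{eq:maximalresolvingkernel} we have $\lambda=0$, $R_D(\omega)=D\Z^{2r}$ and $B=dI$. The horizontal projection of the kernel is therefore $dD\Z^{2r}$. The central condition reads $e+\beta(Bp,Ap)\in D\Z$ with $p\in D\Z^{2r}$, and here $\beta(Bp,Ap)=-dk\,p_u\!\cdot p_v$ is divisible by $D^2$. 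So the condition reduces to $e\in D\Z$, which gives exactly the product set $K_{k^n,r}$ claimed in the statement.

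Finally, minimality needs one more argument. Theorem~\ref{thm:cohomologicalresolution} says every compatible resolving kernel lies in $K_{k^n,r}$. Hence every resolving observer has index at least $[\Gamma:K_{k^n,r}]$, and equality of indices forces equality of subgroups. So the congruence cover associated with $K_{k^n,r}$ is the unique minimal resolving cover, completing the proof.
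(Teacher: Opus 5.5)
Your proposal is correct and follows the paper's proof, which consists only of the reduction $F_{k,r}^n=F_{k^n,r}$ followed by Theorem~\ref{thm:heisexact} with $k$ replaced by $k^n$. Your explicit recomputation of the kernel from \eqref{eq:maximalresolvingkernel} and your index argument for uniqueness of the minimal resolving cover are valid elaborations of steps the paper leaves implicit; strictly, $B=I-A=-(k^n-1)I$ rather than $(k^n-1)I$, but this sign changes neither $BR_D(\omega)=(k^n-1)(k^{2n}-1)\Z^{2r}$ nor the divisibility of $\beta(Bp,Ap)$ by $(k^{2n}-1)^2$.
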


\begin{proof}
Apply Theorem~\ref{thm:heisexact} to $F_{k,r}^n=F_{k^n,r}$.
\end{proof}

\begin{remark}[Geometric meaning of the Heisenberg lower bound]
The canonical representatives consist of $d^{2r}$ horizontal fixed-point
classes and $D$ central layers over each horizontal class.  The proof shows
that a resolving cover must refine every horizontal lattice direction from
scale $d$ to scale $dD$ and must simultaneously retain a central quotient of
order $D$.  In the language of Theorem~\ref{thm:coverlift}, the associated
lifts remain deck-conjugate in every smaller compatible cover.  The extra
factor $D^{2r}$ in $\oc/N$ is therefore the exact finite-cover cost of
separating the central fixed-point geometry created by the nonabelian
commutator form.
\end{remark}

\begin{corollary}[Nonabelian resolution is necessary]
\label{cor:abelianblind}
Every finite observer of $F_{k,r}$ with abelian quotient identifies the
$D$ central Reidemeister layers over each horizontal class.  In particular,
\[
  V_N(F_{k,r})\le d^{2r}<N(F_{k,r}),
\]
so no abelian observer resolves the spectrum.
\end{corollary}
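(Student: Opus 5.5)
The plan is to show that any abelian quotient kills the center's contribution, so that the class map factors through the abelianized twisted-conjugacy set. Let $N$ be a finite observer with $Q=\Heis_r(\Z)/N$ abelian. Then $N$ contains the commutator subgroup of $\Heis_r(\Z)$. From the group law,
\[
  [(u,v,0),(u',v',0)]=(0,0,\pm(u\cdot v'-u'\cdot v)),
\]
and this is the standard unimodular symplectic form. So the commutator subgroup is the whole center $\{(0,0,c):c\in\Z\}$, and in particular $z^c:=(0,0,c)\in N$ for every $c$.

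Consequently the canonical representatives $(a,b,c)$ and $(a,b,c')$ of Lemma~\ref{lem:heisclasses} have the same image $q_N(a,b,c)=q_N(a,b,c')$ in $Q$. Their images in $\RN(\bar\phi_N)$ therefore coincide, which is the statement that the $D$ central layers over each horizontal class $(a,b)$ are identified.

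To bound $V_N$, I use $V_N(F_{k,r})\le R_N(F_{k,r})=\#(q_N)_R(E_f)$. By Lemma~\ref{lem:heisclasses} the essential support $E_f$ is the full set of canonical classes, so the image is represented by at most one class for each horizontal pair $(a,b)$ with $0\le a_i,b_i<d$. This gives $R_N\le d^{2r}$. Alternatively, the map factors through $\coker(I-kI_{2r})$ on the abelianization $\Z^{2r}$, which has order $d^{2r}$.

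Finally, $N(F_{k,r})=d^{2r}D$ with $D=k^2-1\ge3>1$, so $d^{2r}<N(F_{k,r})$. Hence no abelian observer satisfies $V_N=N(f)$, and by Proposition~\ref{prop:basic}(iii) none resolves the spectrum. The only step that needs care is checking that the commutator subgroup is the whole center, and unimodularity of $\omega$ (for example $[(e_1,0,0),(0,e_1,0)]$ is a central generator) handles it directly.
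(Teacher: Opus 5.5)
Your proposal is correct and follows the paper's argument. An abelian quotient contains the commutator subgroup. That subgroup is the centre, which you verify explicitly via the unimodular commutator form, so canonical representatives that differ only in the central coordinate have the same image. You also spell out the counting step $V_N\le R_N\le d^{2r}<d^{2r}D=N(F_{k,r})$, which the paper's two-sentence proof leaves implicit.
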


\begin{proof}
An abelian quotient kills the commutator subgroup, which is the center of
$\Heis_r(\Z)$.  The canonical representatives in
Lemma~\ref{lem:heisclasses} that differ only in their final coordinate
therefore have the same image.
\end{proof}

\begin{proposition}[Heisenberg fibre stacks]
\label{prop:heisenbergfixedpicture}
Represent the three-dimensional Heisenberg nilmanifold as
\[
  M_1=\Heis_1(\Z)\backslash\Heis_1(\R),
\]
with coordinates $[u,v,c]$, where $(u,v)$ is the coordinate on the base
torus and $c$ is the central fibre coordinate.  A point is fixed by the
scalar dilation $F_{k,1}$ if and only if
\[
  (k-1)u\in\Z,
  \qquad
  (k-1)v\in\Z,
\]
and
\[
  (k^2-1)c+(k-1)uv\in\Z.
\]

For $k=2$, the induced map on the base torus has the single fixed point
$(0,0)$, and
\[
  \Fix(F_{2,1})
  =\left\{
  [0,0,0],
  \left[0,0,\frac13\right],
  \left[0,0,\frac23\right]
  \right\}.
\]
Thus all three fixed points lie on one central fibre.  Each is
nondegenerate, has local fixed-point index $-1$, and represents a distinct
essential Nielsen class.  Their pairwise differences lie in the centre
$[\Heis_1(\Z),\Heis_1(\Z)]$.  Consequently, every finite observer with
abelian quotient assigns the same observed deck-conjugacy label to all three
classes.  They become simultaneously distinguishable in the canonical
Heisenberg congruence cover of degree $27$, and no compatible cover of
smaller degree resolves all three:
\[
  N(F_{2,1})=3,
  \qquad
  \oc(F_{2,1})=27.
\]

For $k=3$, the base map has four fixed points
\[
  (u,v)=\left(\frac a2,\frac b2\right),
  \qquad a,b\in\{0,1\}.
\]
Above each of them lie eight equally spaced fixed points,
\[
  c=\frac{m-\frac{ab}{2}}8,
  \qquad m=0,\ldots,7.
\]
Hence
\[
  \#\Fix(F_{3,1})=N(F_{3,1})=4\cdot8=32,
  \qquad
  \oc(F_{3,1})=2048.
\]
\end{proposition}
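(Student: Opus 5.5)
The plan is to compute the fixed-point set directly in the coordinates $[u,v,c]$ on $\Heis_1(\R)$, using the same group law as $\Heis_1(\Z)$. Then we read off indices and Nielsen classes from the nilmanifold theory already invoked, and import $\oc$ from Theorem~\ref{thm:heisexact}.

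\textbf{Fixed-point equation.} A point $[x]$, with $x=(u,v,c)$, is fixed exactly when $\phi_{k,1}(x)=\gamma x$ for some $\gamma=(a,b,e)\in\Heis_1(\Z)$. Write the real dilation as $(u,v,c)\mapsto(ku,kv,k^2c)$. The first two coordinates give $a=(k-1)u$ and $b=(k-1)v$, which must be integers.

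\textbf{Central coordinate.} Using $(a,b,e)(u,v,c)=(a+u,b+v,e+c-u\,b)$, the central coordinate gives $k^2c=e+c-ub$. Substituting $b=(k-1)v$ yields
\[
  (k^2-1)c+(k-1)uv=e\in\Z .
\]
I will check the sign convention against the stated formula. If it comes out as $-(k-1)uv$, I will fix it by the choice of coset representative $u\in[0,1)$ or by replacing $c\mapsto -c$, since only the shape of the solution set matters.

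\textbf{Quotient by the lattice.} I then mod out by the left $\Heis_1(\Z)$-action, so that points are counted on $M_1$ rather than in $\Heis_1(\R)$. Choose base representatives $u,v\in\{0,\tfrac1{k-1},\dots\}\cap[0,1)$. The remaining freedom is central translation $c\mapsto c+1$, so $c$ is determined modulo $1$ by $c\equiv (m-(k-1)uv)/(k^2-1)$.

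\textbf{The cases $k=2$ and $k=3$.}
\begin{itemize}
\item For $k=2$, the only base point is $(0,0)$, and $c\in\{0,\tfrac13,\tfrac23\}$.
\item For $k=3$, the base points are $(a/2,b/2)$. Then $(k-1)uv=ab/2$, giving $c=(m-ab/2)/8$ for $m=0,\dots,7$. These are $32$ points in all.
\end{itemize}

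\textbf{Nondegeneracy and classes.} At each fixed point the differential is conjugate to $\Phi_{k,1}$. Hence the fixed point is nondegenerate with index $\operatorname{sgn}\det(I-\Phi_{k,1})=-1$, since $(1-k)^2(1-k^2)<0$.

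Now compare the point counts with Lemma~\ref{lem:heisclasses} and the Nielsen formula $N=R=d^{2}D$: for $k=2$ this gives $3$, and for $k=3$ it gives $4\cdot 8=32$. Since the count of fixed points equals $N(F)$ and each essential class contains at least one fixed point, each fixed point is its own essential Nielsen class.

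\textbf{Centre and abelian observers.} For $k=2$, the three points differ by central elements. Equivalently, their Reidemeister labels are $(0,0,c)$, which are exactly the canonical representatives $z^c$ of Theorem~\ref{thm:heisprofilek2}. These labels lie in the centre, which equals the commutator subgroup. Corollary~\ref{cor:abelianblind} then gives the abelian-observer statement.

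\textbf{Observer complexity.} Finally, $\oc(F_{2,1})=1\cdot 3^{3}=27$ and $\oc(F_{3,1})=2^2\cdot 8^3=2048$, both from Theorem~\ref{thm:heisexact} with $r=1$. The degree-$27$ cover is the kernel $K_{2,1}$.

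\textbf{Main obstacle.} The main obstacle is matching each fixed point to its Reidemeister label, in particular the sign and offset in the central equation coming from the left versus right action convention. I will handle this by recomputing $\gamma$ from $\phi(x)x^{-1}$ explicitly. That also confirms that the labels for $k=2$ are central.
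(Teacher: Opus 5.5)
Your proposal is correct and follows essentially the route the paper relies on. The paper states this proposition without a separate proof and assembles it from the same ingredients: the direct fixed-point equation in coordinates, the nilmanifold Nielsen formula together with Lemma~\ref{lem:heisclasses}, Corollary~\ref{cor:abelianblind}, and Theorem~\ref{thm:heisexact}. Your counting argument, that each essential class is nonempty and $\#\Fix(F_{k,1})=N(F_{k,1})$, correctly forces each fixed point to be its own essential class. With the paper's group law, your central-coordinate computation already yields $(k^2-1)c+(k-1)uv=e$ with the stated plus sign, so the contingency of flipping $c\mapsto -c$ is unnecessary.
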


\begin{figure}[H]
\centering
\includegraphics[width=0.98\textwidth]{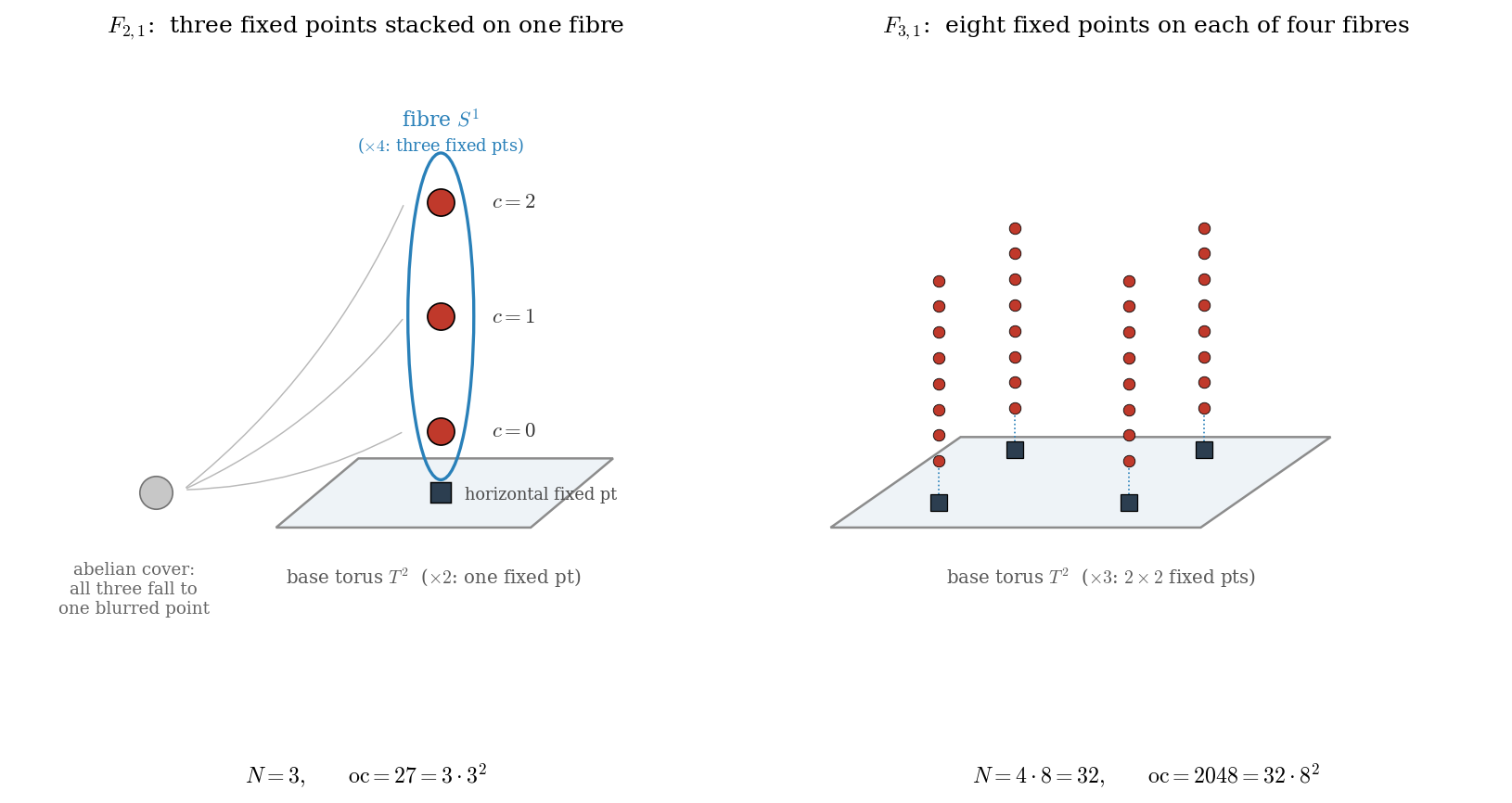}
\caption{Fixed points of the scalar Heisenberg dilations $F_{2,1}$ and
$F_{3,1}$.  For $k=2$, three fixed points are stacked on one central fibre.
For $k=3$, four fixed points occur on the base torus, and eight fixed points
lie above each of them.  The fibre stacks are schematic coordinate pictures,
not a global Euclidean embedding of the nontrivial Heisenberg bundle.}
\label{fig:heisenbergfixedpoints}
\end{figure}

\begin{remark}[Periodic fibre stacks]
Put $d_n=k^n-1$ and $D_n=k^{2n}-1$.  The map $F_{k,r}^n$ has
$d_n^{2r}$ fixed points on the base torus and $D_n$ fixed points on the
central circle above each of them, so
\[
  \#\Fix(F_{k,r}^n)=N(F_{k,r}^n)=d_n^{2r}D_n.
\]
Corollary~\ref{cor:periodicresolution} shows that resolving these classes
requires the larger degree $d_n^{2r}D_n^{2r+1}$.  The factor $D_n^{2r}$ is
therefore a separation cost created by the central bundle twisting, not an
additional fixed-point count.
\end{remark}

\begin{corollary}[Abelian--nilpotent contrast]
\label{cor:contrast}
Let $T_{k,r}$ be the toral endomorphism of $\T^{2r+1}$ induced by
\[
  A_{k,r}=\operatorname{diag}(kI_{2r},k^2).
\]
Then $T_{k,r}$ and $F_{k,r}$ have the same linear eigenvalues, Nielsen number,
and topological entropy:
\[
  N(T_{k,r})=N(F_{k,r})=d^{2r}D,
\]
\[
  h_{\mathrm{top}}(T_{k,r})
  =h_{\mathrm{top}}(F_{k,r})
  =(2r+2)\log k.
\]
Nevertheless,
\[
  \oc(T_{k,r})=d^{2r}D,
  \qquad
  \oc(F_{k,r})=d^{2r}D^{2r+1},
\]
and hence
\[
  \frac{\oc(F_{k,r})}{\oc(T_{k,r})}=D^{2r}.
\]
\end{corollary}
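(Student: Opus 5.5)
The corollary compares a toral endomorphism with the Heisenberg dilation, so I would assemble it from the toral results of Section~7 and the Heisenberg computation of Theorem~\ref{thm:heisexact}; no new construction is needed.

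\emph{Toral side.} The matrix $A_{k,r}=\operatorname{diag}(kI_{2r},k^2)$ has no eigenvalue equal to $1$ because $k\ge2$. Hence
\[
  D_{\mathrm{tor}}:=|\det(I-A_{k,r})|=(k-1)^{2r}(k^2-1)=d^{2r}D\neq0 .
\]
Theorem~\ref{thm:toralprofile} then gives directly $N(T_{k,r})=\oc(T_{k,r})=d^{2r}D$.

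\emph{Heisenberg side.} Two earlier results apply. The Nielsen formula quoted before Theorem~\ref{thm:heisexact} gives $N(F_{k,r})=|\det(I-\Phi_{k,r})|=d^{2r}D$. Theorem~\ref{thm:heisexact} gives $\oc(F_{k,r})=d^{2r}D^{2r+1}$. Dividing these yields the ratio $D^{2r}$.

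\emph{Linear eigenvalues.} Both $A_{k,r}$ and $\Phi_{k,r}=\operatorname{diag}(kI_{2r},k^2)$ are the same diagonal matrix. So the eigenvalues are $k$ with multiplicity $2r$ and $k^2$ with multiplicity one in both cases.

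\emph{Entropy.} This is the only step needing an outside input, because Proposition~\ref{prop:entropy} covers hyperbolic automorphisms only, while these maps are expanding endomorphisms.

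For the torus, I would use the standard formula for toral endomorphisms,
\[
  h_{\mathrm{top}}=\sum_{|\lambda_j|>1}\log|\lambda_j|,
\]
from \cite{Walters}. It gives
\[
  h_{\mathrm{top}}(T_{k,r})=2r\log k+2\log k=(2r+2)\log k .
\]

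For the nilmanifold, I would use the analogous formula for affine or expanding maps of nilmanifolds: the entropy is the sum of $\log|\lambda|$ over eigenvalues of modulus greater than one of the linearization $\Phi_{k,r}$ (Bowen's formula for the Haar measure / the volume growth of the expanding map). This yields the same value $(2r+2)\log k$.

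A consistency check is available by counting periodic points. The growth rate of $\#\Fix(F_{k,r}^n)=(k^n-1)^{2r}(k^{2n}-1)$ is $(2r+2)\log k$, matching the expansive self-cover count.

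The main obstacle is justifying the nilmanifold entropy formula with a citation rather than proving it. I would cite the known result for expanding endomorphisms: the entropy equals $\log$ of the degree, and here $\deg F_{k,r}=k^{2r}\cdot k^2$. Stating the formula as $\log\deg$ is the simplest route, since the degree is the index $[\Heis_r(\Z):\phi_{k,r}(\Heis_r(\Z))]=k^{2r+2}$, and this index is immediate from the coordinates.
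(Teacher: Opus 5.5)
Your proposal is correct and follows the paper's proof. The paper likewise takes the Nielsen numbers from the determinant formulas and the two observer complexities from Theorems~\ref{thm:toralprofile} and~\ref{thm:heisexact}. For entropy it applies the single fact that both maps are expanding coverings of degree $k^{2r+2}$ (citing \cite{Bowen}), rather than using the eigenvalue formula from \cite{Walters} on the torus side. Your remark that $A_{k,r}$ and $\Phi_{k,r}$ are the same diagonal matrix settles the eigenvalue claim, which the paper's proof leaves implicit.
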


\begin{proof}
The Nielsen-number identities follow from the determinant formula in the
toral and nilmanifold cases.  The toral observer-complexity formula follows
from Theorem~\ref{thm:toralprofile}, while the Heisenberg formula is
Theorem~\ref{thm:heisexact}.  Both maps are expanding coverings of degree
$k^{2r+2}$; hence their topological entropy is $(2r+2)\log k$, by the standard
entropy formula for expanding homogeneous endomorphisms \cite{Bowen}.
\end{proof}

\begin{corollary}[Nonabelian observer entropy]
\label{cor:heisentropy}
For the scalar Heisenberg dilation,
\[
  h_{\mathrm{oc}}(F_{k,r})=(6r+2)\log k
\]
and therefore
\[
  \boxed{
  h_{\mathrm{oc}}(F_{k,r})
  =\frac{3r+1}{r+1}\,h_{\mathrm{top}}(F_{k,r}).}
\]
In particular $h_{\mathrm{oc}}(F_{k,r})>h_{\mathrm{top}}(F_{k,r})$.
\end{corollary}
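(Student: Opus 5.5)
The plan is to feed the iterate formula of Corollary~\ref{cor:periodicresolution} into the definition of observer entropy and read off the limit directly. Since $F_{k,r}^n=F_{k^n,r}$, Corollary~\ref{cor:periodicresolution} gives, for every $n\ge1$,
\[
  \oc(F_{k,r}^n)=(k^n-1)^{2r}(k^{2n}-1)^{2r+1},
\]
which is finite and at least $1$. The hypothesis in the definition of $h_{\mathrm{oc}}$ therefore holds, and the $\max\{1,\cdot\}$ plays no role.

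Next I will take logarithms and divide by $n$:
\[
  \frac1n\log\oc(F_{k,r}^n)
  =2r\cdot\frac1n\log(k^n-1)+(2r+1)\cdot\frac1n\log(k^{2n}-1).
\]
Since $k\ge2$, we have $\frac1n\log(k^n-1)=\log k+\frac1n\log(1-k^{-n})\to\log k$, and likewise $\frac1n\log(k^{2n}-1)\to2\log k$. So the $\limsup$ is an honest limit, equal to
\[
  2r\log k+2(2r+1)\log k=(6r+2)\log k .
\]

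Finally, Corollary~\ref{cor:contrast} gives $h_{\mathrm{top}}(F_{k,r})=(2r+2)\log k$. Dividing,
\[
  \frac{(6r+2)\log k}{(2r+2)\log k}=\frac{3r+1}{r+1}.
\]
The strict inequality $h_{\mathrm{oc}}(F_{k,r})>h_{\mathrm{top}}(F_{k,r})$ follows because $3r+1>r+1$ for $r\ge1$ and $\log k>0$.

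There is no real obstacle here. The only point needing care is justifying $F_{k,r}^n=F_{k^n,r}$, so that Theorem~\ref{thm:heisexact} applies to each iterate; this is clear because $\phi_{k,r}^n=\phi_{k^n,r}$ on the lattice.
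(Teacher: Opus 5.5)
Your proposal is correct and follows the paper's proof. Both substitute the iterate formula of Corollary~\ref{cor:periodicresolution} (via $F_{k,r}^n=F_{k^n,r}$), take logarithms, divide by $n$ and pass to the limit, then take $h_{\mathrm{top}}(F_{k,r})=(2r+2)\log k$ from Corollary~\ref{cor:contrast}. Your checks that the $\limsup$ is a genuine limit and that $3r+1>r+1$ give the strict inequality, which the paper leaves implicit.
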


\begin{proof}
The $n$-th iterate of $F_{k,r}$ is $F_{k^n,r}$.  Corollary~\ref{cor:periodicresolution} gives
\[
  \oc(F_{k,r}^n)
  =(k^n-1)^{2r}(k^{2n}-1)^{2r+1}.
\]
Taking logarithms, dividing by $n$, and passing to the limit yields
\[
  h_{\mathrm{oc}}(F_{k,r})
  =2r\log k+2(2r+1)\log k
  =(6r+2)\log k.
\]
Corollary~\ref{cor:contrast} supplies the topological entropy.
\end{proof}

\begin{theorem}[Exact Heisenberg observer-complexity zeta function]
\label{thm:heiszeta}
Let
\[
  P_r(t):=(t-1)^{2r}(t^2-1)^{2r+1}
  =\sum_{j=0}^{6r+2}c_{r,j}t^j.
\]
For every $r\ge1$ and $k\ge2$,
\[
  \boxed{
  \zetaoc(F_{k,r};z)
  =\prod_{j=0}^{6r+2}(1-k^jz)^{-c_{r,j}}.}
\]
Hence $\zetaoc(F_{k,r};z)$ is rational.  Its Taylor series at the origin has
radius of convergence
\[
  \boxed{\rho_{\mathrm{oc}}(F_{k,r})=k^{-(6r+2)}},
\]
and therefore
\[
  -\log\rho_{\mathrm{oc}}(F_{k,r})
  =h_{\mathrm{oc}}(F_{k,r}).
\]
For $r\ge1$, this zeta function is different from the Nielsen zeta function
of $F_{k,r}$.
\end{theorem}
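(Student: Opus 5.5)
The plan is to start from Corollary~\ref{cor:periodicresolution}, which gives $\oc(F_{k,r}^n)=P_r(k^n)$ for every $n\ge1$ with $P_r(t)=(t-1)^{2r}(t^2-1)^{2r+1}$. Writing $P_r(t)=\sum_j c_{r,j}t^j$, we get
\[
  \oc(F_{k,r}^n)=\sum_{j=0}^{6r+2}c_{r,j}(k^j)^n .
\]
Substituting into Definition~\ref{def:oczeta} and exchanging the finite sum over $j$ with the sum over $n$ should give
\[
  \sum_{n\ge1}\frac{\oc(F_{k,r}^n)}{n}z^n
  =\sum_j c_{r,j}\sum_{n\ge1}\frac{(k^jz)^n}{n}
  =-\sum_j c_{r,j}\log(1-k^jz)
\]
as formal power series. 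Exponentiating then yields the product formula. Rationality is immediate, since the $c_{r,j}$ are integers and there are finitely many factors.

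For the radius of convergence, I expect the Taylor series to have radius equal to the distance from the origin to the nearest zero or pole of the rational function, once common factors have cancelled. The factors $(1-k^jz)$ for distinct $j$ are coprime, so no cancellation occurs between different $j$. Each $c_{r,j}\neq0$ produces a zero or pole at $z=k^{-j}$, and the nearest one to the origin comes from the largest $j$ with $c_{r,j}\ne0$. Since $P_r$ is monic of degree $2r+2(2r+1)=6r+2$, we have $c_{r,6r+2}=1$. That factor is a simple pole at $k^{-(6r+2)}$, and all other singularities lie farther out, so $\rho_{\mathrm{oc}}=k^{-(6r+2)}$. Comparing with Corollary~\ref{cor:heisentropy} then gives $-\log\rho_{\mathrm{oc}}=h_{\mathrm{oc}}$.

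It remains to distinguish $\zetaoc$ from the Nielsen zeta function. By the text after Lemma~\ref{lem:heisclasses}, applied to the iterate $F_{k^n,r}$, we have $N(F_{k,r}^n)=(k^n-1)^{2r}(k^{2n}-1)$. By the same computation, the Nielsen zeta function is $\prod_j(1-k^jz)^{-e_j}$, where the $e_j$ are the coefficients of $(t-1)^{2r}(t^2-1)$. Its top degree is $2r+2<6r+2$ for $r\ge1$, so its radius of convergence $k^{-(2r+2)}$ differs from $\rho_{\mathrm{oc}}$. As a more elementary check, the coefficient of $z$ already differs: $\oc(F_{k,r})\ne N(F_{k,r})$ because $D^{2r}>1$.

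The only step needing care is the radius claim. There I must make sure the product cannot cancel the factor $(1-k^{6r+2}z)^{-1}$. This follows because the distinct $j$ give distinct points $k^{-j}$ (as $k\ge2$), and the exponent at $j=6r+2$ is $-1\neq0$.
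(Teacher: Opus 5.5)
Your proposal is correct and follows the paper's proof essentially step for step. You expand $\oc(F_{k,r}^n)=P_r(k^n)$ as a finite combination of geometric sequences, apply the formal logarithm identity, read off the radius from the simple pole at $z=k^{-(6r+2)}$ forced by the monic leading coefficient, and separate the Nielsen zeta function using the extra factor $(k^{2n}-1)^{2r}$.

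One small wording point: the radius of convergence of a rational function's Taylor series at $0$ is the distance to the nearest \emph{pole}, since zeros impose no constraint. Here the closest of the points $k^{-j}$ is the pole at $k^{-(6r+2)}$, so your conclusion is unaffected.
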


\begin{proof}
Since $F_{k,r}^n=F_{k^n,r}$, Theorem~\ref{thm:heisexact} gives
\[
  \oc(F_{k,r}^n)
  =(k^n-1)^{2r}(k^{2n}-1)^{2r+1}
  =P_r(k^n)
  =\sum_{j=0}^{6r+2}c_{r,j}k^{jn}.
\]
Using the formal identity
\[
  \sum_{n\ge1}\frac{(k^jz)^n}{n}=-\log(1-k^jz),
\]
we obtain
\[
\begin{aligned}
  \zetaoc(F_{k,r};z)
  &=\exp\left(
      \sum_{j=0}^{6r+2}c_{r,j}
      \sum_{n\ge1}\frac{(k^jz)^n}{n}
    \right)\\
  &=\prod_{j=0}^{6r+2}(1-k^jz)^{-c_{r,j}}.
\end{aligned}
\]
The leading coefficient of $P_r$ is $c_{r,6r+2}=1$.  Hence the product has a
pole at $z=k^{-(6r+2)}$, and every other possible zero or pole has modulus at
least $k^{-(6r+1)}$.  This proves the radius formula.  The logarithmic radius
identity is Corollary~\ref{cor:heisentropy}.  Finally,
\[
  N(F_{k,r}^n)=(k^n-1)^{2r}(k^{2n}-1),
\]
whereas the observer-complexity sequence contains the additional factor
$(k^{2n}-1)^{2r}$; thus the two zeta functions are distinct.
\end{proof}

\begin{theorem}[Classical periodic fixed-point data do not determine finite-cover resolution]
\label{thm:nondetermination}
For every $r\ge1$ and $k\ge2$, let $F_{k,r}$ be the scalar dilation of the
$(2r+1)$-dimensional integral Heisenberg nilmanifold and let $T_{k,r}$ be the
toral endomorphism induced by
\[
  A_{k,r}=\operatorname{diag}(kI_{2r},k^2).
\]
For $n\ge1$, put
\[
  d_n:=k^n-1,
  \qquad
  D_n:=k^{2n}-1.
\]
Then, for every $n\ge1$,
\[
  N(F_{k,r}^n)=N(T_{k,r}^n)=d_n^{2r}D_n
\]
and
\[
  L(F_{k,r}^n)=L(T_{k,r}^n)=-d_n^{2r}D_n.
\]
Consequently, their Nielsen zeta functions
\[
  \zeta_{N,h}(z)
  :=\exp\left(\sum_{n\ge1}\frac{N(h^n)}{n}z^n\right)
\]
coincide:
\[
  \zeta_{N,F_{k,r}}(z)=\zeta_{N,T_{k,r}}(z).
\]
Moreover, the two linear models have the same eigenvalue multiset
\[
  \{\underbrace{k,\ldots,k}_{2r\text{ times}},k^2\},
\]
and
\[
  h_{\mathrm{top}}(F_{k,r})
  =h_{\mathrm{top}}(T_{k,r})
  =(2r+2)\log k.
\]
Nevertheless,
\[
  \oc(T_{k,r}^n)=d_n^{2r}D_n,
  \qquad
  \oc(F_{k,r}^n)=d_n^{2r}D_n^{2r+1},
\]
so
\[
  \boxed{
  \frac{\oc(F_{k,r}^n)}{\oc(T_{k,r}^n)}
  =D_n^{2r}
  =(k^{2n}-1)^{2r}.}
\]
In particular,
\[
  \zetaoc(F_{k,r};z)\ne\zetaoc(T_{k,r};z)
\]
and
\[
  \boxed{
  \lim_{n\to\infty}\frac1n
  \log\frac{\oc(F_{k,r}^n)}{\oc(T_{k,r}^n)}
  =4r\log k>0.}
\]
Thus the finite-cover resolution structure of the essential periodic
Nielsen classes is not determined by the Nielsen sequence or Nielsen zeta
function, the Lefschetz sequence, the common differential eigenvalue multiset, or
topological entropy.
\end{theorem}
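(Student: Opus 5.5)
The plan is to reduce every assertion to iterate-level statements already proved, using the identities $F_{k,r}^n=F_{k^n,r}$ and $T_{k,r}^n=T_{k^n,r}$. The second holds because $A_{k,r}^n=A_{k^n,r}$. The first holds because $\phi_{k,r}^n=\phi_{k^n,r}$ as endomorphisms of $\Heis_r(\Z)$, and both maps lift the same affine (here linear) Lie-algebra map, so they agree as self-maps of $M_r$.

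With this reduction, the counts follow as follows.
\begin{itemize}
\item \emph{Nielsen numbers.} The nilmanifold Nielsen formula quoted before Theorem~\ref{thm:heisexact} gives $N(F_{k,r}^n)=|\det(I-\Phi_{k^n,r})|=d_n^{2r}D_n$. Theorem~\ref{thm:toralprofile}, applied to $A_{k^n,r}$, gives $N(T_{k,r}^n)=|\det(I-A_{k^n,r})|=d_n^{2r}D_n$, since $\det(I-A_{k^n,r})=(1-k^n)^{2r}(1-k^{2n})\ne0$.
\item \emph{Lefschetz numbers.} On both the torus and the nilmanifold, $L=\det(I-\Phi)$ for the linearization (Anosov's theorem for nilmanifolds, as cited in \cite{DeconinckDekimpe}). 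The sign is $(-1)^{2r}\cdot(-1)=-1$, which gives $L=-d_n^{2r}D_n$ in both cases.
\item \emph{Zeta functions.} Equal Nielsen sequences immediately give equal Nielsen zeta functions.
\item \emph{Eigenvalues and entropy.} The eigenvalue multiset $\{k,\dots,k,k^2\}$ is read off from $\Phi_{k,r}=A_{k,r}$. The entropy equality is Corollary~\ref{cor:contrast}.
\end{itemize}

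For observer complexity, Corollary~\ref{cor:periodicresolution} gives $\oc(F_{k,r}^n)=d_n^{2r}D_n^{2r+1}$. Theorem~\ref{thm:toralprofile} gives $\oc(T_{k,r}^n)=N(T_{k,r}^n)=d_n^{2r}D_n$, so the ratio is $D_n^{2r}$.

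For $r\ge1$ and $k\ge2$ we have $D_n^{2r}\ge 3^{2}>1$ at $n=1$. The two $\oc$ sequences therefore differ in the first term, and so the formal power series $\zetaoc$ differ: the logarithms of exponentials of formal series with zero constant term determine the coefficients.

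Finally, $\frac1n\log D_n^{2r}=2r\cdot\frac1n\log(k^{2n}-1)$. Writing $\log(k^{2n}-1)=2n\log k+\log(1-k^{-2n})$, the limit is $4r\log k$.

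The main obstacle I expect is the Lefschetz claim and the exact identification $F_{k,r}^n=F_{k^n,r}$ as maps, as opposed to merely homotopic ones. For the former, I would invoke the Anosov/nilmanifold formula $L(F)=\det(I-\Phi)$ from the cited literature. For the latter, homotopy suffices anyway by Proposition~\ref{prop:homotopyinvariance}, since all the invariants involved are homotopy invariant, with entropy taken from Corollary~\ref{cor:contrast}. Everything else is bookkeeping from the earlier results.
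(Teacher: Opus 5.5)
Your proposal is correct and follows essentially the same route as the paper: you reduce to the iterates $F_{k,r}^n=F_{k^n,r}$ and $A_{k,r}^n=\operatorname{diag}(k^nI_{2r},k^{2n})$, read off $N$ and $L$ from the toral and nilmanifold determinant formulas, and take the entropy from Corollary~\ref{cor:contrast}. You then get the two $\oc$ formulas from Theorem~\ref{thm:toralprofile} and Theorem~\ref{thm:heisexact} (via Corollary~\ref{cor:periodicresolution}) before computing the ratio and its limit. Your additional remarks only make explicit what the paper leaves implicit: that $\phi_{k,r}^n=\phi_{k^n,r}$ holds on the real Heisenberg group, so the iterate identity is exact, and that the $\oc$ sequences already differ at $n=1$, which separates the zeta functions.
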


\begin{proof}
The $n$-th iterates satisfy
\[
  F_{k,r}^n=F_{k^n,r},
  \qquad
  A_{k,r}^n=\operatorname{diag}(k^nI_{2r},k^{2n}).
\]
The determinant formulas for toral endomorphisms and affine nilmanifold maps
give
\[
  N(F_{k,r}^n)=N(T_{k,r}^n)
  =\left|(1-k^n)^{2r}(1-k^{2n})\right|
  =d_n^{2r}D_n
\]
and
\[
  L(F_{k,r}^n)=L(T_{k,r}^n)
  =(1-k^n)^{2r}(1-k^{2n})
  =-d_n^{2r}D_n.
\]
Equality of the Nielsen zeta functions follows coefficientwise.  The entropy
identity is Corollary~\ref{cor:contrast}.  Applying
Theorem~\ref{thm:toralprofile} to $A_{k,r}^n$ and
Theorem~\ref{thm:heisexact} with $k$ replaced by $k^n$ yields the two
observer-complexity formulas.  Their ratio is $D_n^{2r}$, so the observer zeta
functions differ.  Finally,
\[
  \frac1n\log D_n^{2r}
  =\frac{2r}{n}\log(k^{2n}-1)
  \longrightarrow 4r\log k.
\]
\end{proof}

\begin{remark}
Classical Nielsen and Reidemeister zeta functions package the numbers of
essential fixed-point classes of the iterates; see
\cite{FelHill,DekimpeDugardein,FelLee}.  Theorem~\ref{thm:heiszeta} instead packages the minimum finite-cover degree needed to
resolve those classes simultaneously.  The rationality proof is elementary
because the exact nonabelian complexity formula turns this new sequence into
a finite integral combination of geometric sequences.
\end{remark}

\begin{example}[The three-dimensional dilation]
For $r=1$ and $k=2$,
\[
  N(F_{2,1})=3,
  \qquad
  \oc(F_{2,1})=27.
\]
Thus a natural self-cover of the three-dimensional Heisenberg nilmanifold has
only three essential Nielsen classes, but every finite regular cover that
resolves them has degree at least $27$.
\end{example}

\begin{example}[A Heisenberg automorphism with $\oc=16$]
Let
\[
  \Gamma=\langle x,y,z\mid[x,y]=z,\ [x,z]=[y,z]=1\rangle
\]
and define $\alpha\in\Aut(\Gamma)$ by
\[
  \alpha(x)=x^2y,\qquad \alpha(y)=x,\qquad \alpha(z)=z^{-1}.
\]
On the horizontal lattice and centre it induces
\[
  A=\begin{pmatrix}2&1\\1&0\end{pmatrix},\qquad C=-1.
\]
The commutator form is unimodular symplectic, so for $D=I-C=2$ one has
$R_D(\omega)=2\Z^2$.  Corollary~\ref{cor:torusbundleoc} therefore gives
\[
  N(f_\alpha)=|\det(I-A)|\,|1-C|=4,
  \qquad
  \oc(f_\alpha)=4[\Z^2:2\Z^2]=16.
\]
Every abelian observer kills the centre and hence merges the two central
layers over each horizontal class; none resolves the spectrum.
\end{example}

\section{Conclusion}

Finite compatible covers provide a quantitative resolution theory for the
Reidemeister trace.  The resulting profile distinguishes detection, removal
of index cancellation, and full separation of essential Nielsen classes.
Profinite reconstruction identifies the information invisible to every
finite observer.  The general realization theorem shows that nonzero blind
traces genuinely occur on finite complexes, even for residually finite
fundamental groups, while the delayed-visibility examples show that the
minimum resolving degree is independent of the usual fixed-point counts and
induced homological data.

The central result is the cohomological formula
\[
  \oc(F)=N(F)[\Z^n:R_{I-C}(\omega)]
\]
for compatible maps of principal torus bundles in the finite-Reidemeister
regime.  It identifies the exact
excess resolution cost with the radical index of the reduced degree-two
characteristic class.  The resulting extremal criteria characterize exactly
when the penalty vanishes and, for scalar circle bundles, when every active
skew block contributes maximally.  The heterogeneous example
$W=J\oplus8J$ shows that zero- and maximal-penalty blocks may coexist in one
non-Heisenberg bundle.  The toral case has zero penalty and an exact divisor
profile.  The Heisenberg case has a nondegenerate symplectic class and an
unbounded penalty; already for $k=2$ its full profile is all-or-nothing, and
for $F_{3,1}$ a visible count of five does not divide the Nielsen number
$32$.  At the level of iterates, this distinction produces a
rational observer-complexity zeta function and an exponential gap that is not
determined by the Nielsen or Lefschetz sequences, the Nielsen zeta function,
the differential eigenvalues, or topological entropy.

\end{document}